\theoremstyle{plain}
\newtheorem{theorem}{Theorem}[section]
\newtheorem{lemma}[theorem]{Lemma}
\theoremstyle{remark}
\newtheorem{corollary}{Corollary}
\newtheorem{remark}{Remark}
\newtheorem{definition}[theorem]{Definition}
\newtheorem{proposition}[theorem]{Proposition}
\begin{document}

\begin{frontmatter}
\title{A Malliavin Calculus Approach to Backward Stochastic Volterra Integral Equations}
%\title{A sample article title with some additional note\thanksref{t1}}
\runtitle{A Malliavin Calculus Approach to BSVIE}
%\thankstext{T1}{A sample additional note to the title.}

\begin{aug}
%%%%%%%%%%%%%%%%%%%%%%%%%%%%%%%%%%%%%%%%%%%%%%%
%% Only one address is permitted per author. %%
%% Only division, organization and e-mail is %%
%% included in the address.                  %%
%% Additional information can be included in %%
%% the Acknowledgments section if necessary. %%
%% ORCID can be inserted by command:         %%
%% \orcid{0000-0000-0000-0000}               %%
%%%%%%%%%%%%%%%%%%%%%%%%%%%%%%%%%%%%%%%%%%%%%%%
\author[A]{\fnms{Qian}~\snm{Lei}\ead[label=e1]{qian.lei@ntu.edu.sg}\orcid{https://orcid.org/0009-0006-1569-3542}}
\and
\author[B]{\fnms{Chi Seng}~\snm{Pun}\ead[label=e2]{cspun@ntu.edu.sg}\orcid{https://orcid.org/0000-0002-7478-6961}}
% \and
% \author[B]{\fnms{Third}~\snm{Author}\ead[label=e3]{third@somewhere.com}}
%%%%%%%%%%%%%%%%%%%%%%%%%%%%%%%%%%%%%%%%%%%%%%
%% Addresses                                %%
%%%%%%%%%%%%%%%%%%%%%%%%%%%%%%%%%%%%%%%%%%%%%%
\address[A]{School of Physical and Mathematical Sciences, Nanyang Technological University, Singapore \printead[presep={ ,\ }]{e1}}

\address[B]{School of Physical and Mathematical Sciences, Nanyang Technological University, Singapore \printead[presep={,\ }]{e2}}
\end{aug}

\begin{abstract}
In this paper, we establish existence, uniqueness, and regularity properties of the solutions to multi-dimensional backward stochastic Volterra integral equations (BSVIEs), whose (possibly random) generator reflects nonlinear dependence on both the solution process and the martingale integrand component of the adapted solutions, as well as their diagonal processes. The well-posedness results are developed with the use of Malliavin calculus, which renders a novel perspective in tackling with the challenging diagonal processes while contrasts with the existing methods. We also provide a probabilistic interpretation of the classical solutions to the counterpart semi-linear partial differential equations through the explicit adapted solutions of BSVIEs. Moreover, we formulate with BSVIEs to explicitly characterize dynamically optimal mean-variance portfolios for various stochastic investment opportunities, with the myopic investment and intertemporal hedging demands being identified as two diagonal processes of BSVIE solutions.
\end{abstract}

\begin{keyword}[class=MSC]
\kwd[Primary ]{60H07,60H10,60H20}
% \kwd{00X00}
\kwd[; secondary ]{60G12,35K58,91G80,49L12}
\end{keyword}

\begin{keyword}
\kwd{Existence and Uniqueness}
\kwd{Backward Stochastic Volterra Integral Equations}
\kwd{Malliavin Calculus}
\kwd{Semilinear Parabolic Nonlocal PDEs}
\kwd{Time Inconsistency}
\kwd{Mathematics of Behavioral Economics}
\end{keyword}

\end{frontmatter}
%%%%%%%%%%%%%%%%%%%%%%%%%%%%%%%%%%%%%%%%%%%%%%
%% Please use \tableofcontents for articles %%
%% with 50 pages and more                   %%
%%%%%%%%%%%%%%%%%%%%%%%%%%%%%%%%%%%%%%%%%%%%%%
%\tableofcontents

\section{Introduction} \label{sec:intro}
In this paper, we focus on a fixed finite time horizon $[0,T]$ and introduce the notation of $\Delta[a,b]:=\{(t,s):a\leq t\leq s\leq b\}$. The backward stochastic Volterra integral equations (BSVIEs) under our consideration takes the form
\begin{equation} \label{General BSVIE}
    \begin{cases}
        \begin{aligned}
            dY(t,s) & ~=~  g(t,s,Y(t,s),Z(t,s),Y(s,s),Z(s,s))ds+Z(t,s)dB(s), \\
            
            Y(t,T) & ~=~ \xi(t), \quad 0\leq t \leq s\leq T,
        \end{aligned} 
    \end{cases}
\end{equation} 
where $\{B(s)\}_{0\leq s\leq T}$ is a standard $n$-dimensional Brownian motion defined on some complete filtered probability space $(\Omega,\mathcal{F},P,\{\mathcal{F}_s\}_{0\leq s\leq T})$. Let $\mathbb{F}:=\{\mathcal{F}_s\}_{0\leq s\leq T}$ be the filtration generated by the Brownian motion and the $P$-null sets, and $\mathcal{F}=\mathcal{F}_T$. Moreover, the generator $g:\Delta[0,T]\times\Omega\times\mathbb{R}^{2k}\times\mathbb{R}^{2(k\times n)} \to \mathbb{R}^k$ and the $\mathcal{F}_T$-measurable terminal datum \(\xi:[0,T]\times\Omega\to\mathbb{R}^k\) are both given and they could be random. We have suppressed the argument $\omega\in\Omega$ in \eqref{General BSVIE} and the convention remains throughout this paper as it is more of an indication of involving randomness. Our goal is to find a pair of $\mathbb{F}$-adapted processes $Y:\Delta[0,T]\times\Omega\to\mathbb{R}^k$ and $Z:\Delta[0,T]\times\Omega\to\mathbb{R}^{k\times n}$ satisfying the equation \eqref{General BSVIE} above. 

The BSVIEs \eqref{General BSVIE} are distinguished from classical backward stochastic differential equations (BSDEs) studied in \cite{Pardoux1990,Karoui1997} in two main aspects: (1) the stochastic system \eqref{General BSVIE} involves two temporal variables $(t,s)$ with an ordering $t\le s$, where $s$ acts as a dynamic variable and $t$ acts as a reference time point. Moreover, both the generator $g$ and the terminal datum $\xi$ are varying in time $t$.; (2) the generator $g$ depends on the values of the solution process $Y$ and the martingale integrand process $Z$ at not only $(t,s)$ but also $(s,s)$. We refer to $\{(Y, Z)(s,s)\}_{0\leq s\leq T}$ as a pair of \textit{diagonal} processes of the processes $\{(Y,Z)(t,s)\}_{0\leq t\leq s\leq T}$. It is clear that when the generator is independent of these diagonal terms, the BSVIE \eqref{General BSVIE} reduces to a well-posed family of BSDEs parameterized by $t\in [0, T]$, which has been extensively studied in \cite{Karoui1997}. Thus, the key analytical challenges and significance of studying \eqref{General BSVIE} lie in their dependence on the diagonal processes.

The studies of \eqref{General BSVIE} have recently attracted much attention due to its potential to generalize and amplify the impacts of the well-studied BSDE theory. The BSVIEs found applications in various fields, especially where we intend to incorporate behavioral/psychological factors into the modeling. To name a few, we list them and the related works as follows:
%There are several interesting problems motivating the study of BSVIEs, besides it being a natural generalization of BSDEs. Let us briefly outline them in the following three points:
\begin{enumerate}
    \item \textbf{Stochastic differential utility}, introduced in \cite{Duffie1992}, can be interpreted as the solution component of the adapted solution to a BSDE; see \cite{Karoui1997}. However, when we incorporate with the empirical findings (see \cite{Ekeland2006,Laibson1997,Strotz1955}) on human's preferences, which are time-varying or reference-dependent, we need a more general framework than BSDEs, i.e., BSVIEs, to describe the associated recursive utility; see \cite{Wei2017, Yan2019, Hamaguchi2021}.
    \item \textbf{Coherent risk measures}, introduced in \cite{artzner1999coherent} and further developed in \cite{cheridito2004coherent,Peng2004} for static or time-consistent (TC) settings, are also extendable to dynamic coherent risk measures with time-varying characteristics, calling for the use of BSVIEs as explored in \cite{yong2007continuous, wang2021recursive}.
    \item \textbf{Time-inconsistent (TIC) stochastic control problems}, where decision-makers' preferences and tastes rely on the initial-time (state) as a reference point, in contrast to conventional TC problems, Bellman's principle of optimality is no longer applicable. In this case, we have to determine which agent out of different dynamic optimality desires, normally among the three types discussed in \cite{Strotz1955}, namely pre-committed, sophisticated, and myopic agents. Sophisticated agents are common because of its tractability and time consistency in decision making, i.e, consistent planning in \cite{Pollak1968}. Many works have contributed to the characterization of the TC policy of the sophisticated agents with two main streams:
    \begin{itemize}[wide = 0pt, leftmargin = 1.3em]
    	\item[(I)] one leverages the spike variation approach to open- or closed-loop-type equilibrium controls (\cite{Hu2012,Hu2017,Huang2017,Han2021,Han2022}) so as to modify Pontryagin's maximum principle in \cite{peng1990general,Yong1999}. The corresponding first-order and second-order adjoint processes are characterized by BSVIEs instead of BSDEs. Moreover, the maximum condition is driven by the diagonal processes; see \cite{Yan2019, Hamaguchi2021} for more details.
    	\item[(II)] One interprets the TIC problems as intrapersonal games (whose players are indexed by time) and then the TC policy coincide with the subgame perfect equilibrium of the game; see \cite{Bjoerk2014, Bjoerk2017, Yong2012, Wei2017, Pun2018}, which deduce extended Hamilton-Jacobi-Bellman (HJB) system or equilibrium HJB equation (a fully nonlinear nonlocal partial differential equation (PDE)) to characterize the equilibrium solution. The Feynman-Kac-type formulas in \cite{Wang2021,lei2023nonlocal,LeiSDG,hernandez2023me,Hernandez2021a} show that the solution to the counterpart BSVIEs offers a probabilistic interpretation of these Nash equilibria. 
    \end{itemize}

    \item \textbf{Probabilistic representation of the solution to counterpart parabolic nonlocal PDEs} can be achieved with BSVIEs. Echoing to the previous point 3(II), we can extend the probabilistic interpretation to a broader class of nonlocal PDEs. Such a relationship is a key ingredient of developing a simulation-based deep learning algorithm for solving those nonlocal PDEs in high dimensions; see \cite{Han2018,Beck2019} for the inspiration of their studies on the local case. Hence, the insights gained from studying BSVIEs can benefit the studies on nonlocal PDE and its numerical or deep-learning solvers.
    
    % From the aforementioned second point on the origins of BSVIEs, we have observed a close relationship between BSVIEs and a class of PDEs originating from TIC controls. This connection between SDEs and PDEs forms a core element for developing deep learning-based algorithms to solve high-dimensional PDEs. Numerical solutions for PDEs have long been challenging due to the curse of dimensionality. [ABC] reformulates PDEs using BSDEs and presents a deep learning-based algorithm that can tackle general high-dimensional parabolic PDEs. It is natural to consider that developing a deep learning-based algorithm for solving high-dimensional equilibrium-type HJB equations would benefit from insights derived from BSVIEs.
    
    % Developing algorithms for solving high-dimensional partial differential equations (PDEs) has been an exceedingly difficult task for a long time, due to the notoriously difficult problem known as the curse of dimensionality. This paper introduces a deep learning-based approach that can handle general high-dimensional parabolic PDEs. To this end, the PDEs are reformulated using backward stochastic differential equations and the gradient of the unknown solution is approximated by neural networks, very much in the spirit of deep reinforcement learning with the gradient acting as the policy function.
\end{enumerate}

In the existing literature on BSVIEs, most of their focuses have been on the dependence of the diagonal process \( Y(s, s) \) rather than \( Z(s, s) \) in the generator. However, for the practical significance of adopting BSVIEs, the inclusion of $Z(s,s)$ is crucial. To see this,
%We will later explain in detail the reasoning behind excluding the challenging term \( Z(s, s) \). However, such a restriction undermines both the theoretical completeness of BSVIEs and severely limits their practical applications.  
\begin{itemize}[wide = 0pt, leftmargin = 1.3em]
    \item[$-$] neglecting $Z(s,s)$ has long impeded the full characterization of open-loop equilibrium controls in certain TIC stochastic control problems. Indeed, \cite[Section 4.1]{yong2017linear} made a strong and hard-to-verify assumption: the almost sure continuity of the map $(t,s) \mapsto Z(t,s)$ was imposed as a sufficient condition for open-loop equilibrium control. Moreover, in \cite[Section 4]{Yan2019}, the characterization relies on a limiting procedure, hindering a fully local description. These issues are mitigated when \cite{Hamaguchi2021,hernandez2023me,Hernandez2021a} provides a rigorous approach to handle such a diagonal process.
    \item[$-$] the exclusion of the diagonal process \( Z(s, s) \) significantly limits the applicability of BSVIEs in analyzing the equilibrium solution. Technically speaking, the absence of $Z(s,s)$ restricts the form of equilibrium HJB PDEs, rendering the truncated formulations being too weak to effectively describe the equilibrium solutions. An example of which is when only the drift in the state process is controlled while the diffusion is not controllable. This degeneration highlights the strong relation between the concepts of nonlocality in BSVIEs (via $Y(s,s)$ and $Z(s,s)$) and consistent planning in TIC problems; see \cite{Wang2021}. 
\end{itemize}

% Beyond the challenges faced in TIC stochastic controls, excluding \( Z(s, s) \) also limits the ability to develop deep learning methods that rely on probabilistic interpretations of PDE solutions. Thus, incorporating \( Z(s, s) \) into the generator is not only theoretically significant but also practically beneficial. 

%Beyond the challenges in TIC stochastic controls, excluding \( Z(s, s) \) also limits the ability to construct probabilistic interpretations of PDE solutions, thereby restricting the development of deep solvers for high-dimensional HJB equations of equilibrium type. Thus, incorporating \( Z(s, s) \) into the generator of \eqref{General BSVIE} is not only theoretically significant but also offers practical advantages.

% The main reason why most existing BSVIE studies exclude the diagonal process \( Z(s, s) \) is due to the significant difficulties in its analysis. These difficulties arise from two key aspects: Well-definedness and Well-posedness

%The main reason most existing BSVIE studies exclude the diagonal process \( Z(s, s) \) is the considerable challenges it presents in analysis. These challenges stem from two main aspects: well-definedness and well-posedness, detailed as follows:

Though it is significant to include the diagonal process $Z(s,s)$, its inclusion poses considerable challenges. They are mainly reflected on two main aspects: well-definedness and well-posedness. In what follows, we discuss about them and review the existing approaches.
\begin{enumerate}
    \item The well-definedness of the pair of  diagonal processes $\{(Y,Z)\}(s,s)\}_{0\leq s\leq T}$ is not clarified. A straightforward yet intuitive definition of $(Y,Z)(s,s)$ could be $(Y,Z)(s,\tau)|_{\tau=s}=\lim_{\tau\to s^+}(Y,Z)(s,\tau)$. It is legitimate for $Y(s,s)$ as $Y(s,\tau)$ is continuous in the $\tau$-direction for almost all trajectories. However, it is generally not well-defined for $Z(s,s)$ because the usual BSDE well-posedness theory only covers the integrability of $Z(s,\tau)$ rather than its pointwise behavior in $\tau$, leading the limiting process to be neither well-defined nor well-understood. A counterexample of which was demonstrated in \cite[Example 2.4]{Hamaguchi2021}, where there exists a deterministic process $Z(s, \tau)$ whose diagonal term $Z(s,s)$ is not well-defined. This challenge partially explains why most existing studies on BSVIEs focuse on only the well-defined $Y(s,s)$ and why the assumption of almost sure continuity of the map $(t,s)\mapsto Z(t,s)$ (desired in some proofs) is hard to verify. The existing literature presents two approaches to address the ill-definedness of $\{Z(s,s)\}_{0\leq s\leq T}$: 
    \begin{itemize}[wide = 0pt, leftmargin = 1.3em]
        \item[$-$] By enhancing $t$-directional regularity of $\{Z(t,s)\}_{0\leq t\leq s\leq T}$ to differentiability or absolute continuity, we can show the existence of a derivative (with respect to $t$) process $\{Z_t(t,s)\}_{0\leq t\leq s\leq T}$. From which, we can then interpret the diagonal process as $Z(s,s)=Z(t,s)+\int^s_tZ_t(\alpha,s)d\alpha$. By this approach, \cite{Hamaguchi2021,hernandez2023me,Hernandez2021a} explore the well-definedness and adaptedness of $\{Z(s,s)\}_{0\leq s\leq T}$, along with its various properties and the BSDE satisfied by $\{Z_t(t,s)\}_{0\leq t\leq s\leq T}$. 
        \item[$-$] By adopting a PDE-analytical approach for Markovian cases, \cite{Wang2021,lei2023nonlocal,LeiSDG} first establish the well-posedness of classical solutions for a class of PDEs with a dual-temporal-variable structure. It\^{o}'s lemma is then applied to show that the Markovian BSVIE admits an adapted solution of the form $(Y, Z)(t,s) = (u(t,s,X(s)),u_x(t,s,X(s))\sigma(s))$, where $ (u(t,s,x), X(s))$ are the unique solutions to the associated PDE and a forward SDE with diffusion \(\sigma(s)\). It is clear that \((Y,Z)(s,s)\) are well-defined in this context, as $(Y,Z)(t,s)$ inherits $s$-directional continuity from $(u(t,s,x), X(s),\sigma(s))$. Though, this approach is exclusive for the Markovian case.
    \end{itemize}
%    Notably, the two approaches to addressing the ill-definedness of the diagonal process differ. The second method, which relies on classical PDE solutions to establish the existence of BSVIE solutions (with uniqueness of the adapted solution requiring separate proof), is applicable only in Markovian cases.
    % Notably, the two approaches mentioned above for addressing the ill-definedness of the diagonal process differ. The second method, which relies on classical PDE solutions to demonstrate the existence of BSVIE solutions, is only applicable to Markovian cases. For non-Markovian BSVIEs, it naturally calls for further research into path-dependent PDEs.
    \item The well-posedness of solutions to BSVIEs with $Z(s,s)$ is difficult to establish. In addition to the ill-definedness of \( Z(s,s) \), another challenge arises: when we attempt to adopt the methods in \cite{Hamaguchi2021, hernandez2023me, Hernandez2021a} that introduce regularity in the $t$-direction to ensure the existence of its derivative process $\{Z_t\}$ and interprets \( Z(s,s) = Z(t,s) + \int_t^s Z_t(\alpha, s) \, d\alpha \), we need to study a non-Lipschitz BSVIE system, jointly composed of the original BSVIE \eqref{General BSVIE} for \( (Y, Z)(t, s) \) and an induced BSVIE for their \( t \)-derivative processes \( (Y, Z)_t(t, s) \) given by
    \begin{equation} \label{SysYt}
    \begin{cases}
         \begin{aligned}
             dY_t(t,s) & ~=~ \big[g_t(t,s,Y(t,s),Z(t,s),Y(s,s),Z(s,s))+g_Y\cdot Y_t(t,s) \\
         & \qquad +g_Z\cdot Z_t(t,s)\big]ds +Z_t(t,s)dB(s), \\
        Y_t(t,T) & ~=~ \xi_t(t), \quad 0\leq t\leq s\leq T,
         \end{aligned}   
    \end{cases}
   \end{equation}
where \(\xi_t\) is the derivative of \(\xi\) with respect to \(t\) and \(g_t\), \(g_Y\), and \(g_Z\) are the partial derivatives of \(g\) with respect to \(t\), \(Y(t,s)\), and \(Z(t,s)\), respectively. All partial derivatives of $g$ in \eqref{SysYt} are evaluated at $(t,s,Y(t,s),Z(t,s),Y(s,s),Z(s,s))$. In what follows, we review the recent advances on studying the BSVIE system \eqref{General BSVIE}-\eqref{SysYt}.
%Next, the recent advances in the well-posedness of solutions to \( Z(s, s) \)-dependent BSVIEs are summarized as follows
\begin{itemize}[start=1, wide = 0pt, leftmargin = 1.3em]
    \item[$-$] To address the ill-definedness of $Z(s,s)$ and establishing the well-posedness of ${\bm Z(s,s)}$\textbf{-independent} BSVIE solutions, \cite{Hamaguchi2021} found that the BSVIE system \eqref{General BSVIE}-\eqref{SysYt} is decoupled as one can first solve the original BSVIE \eqref{General BSVIE} for \( (Y,Z) \) and all coefficients in the BSVIE \eqref{SysYt} for $(Y,Z)_t$ become known. With sufficient regularities, the \( (Y,Z)_t \)-BSVIE is linear and well-posed, parameterized by $t$, which ensures the processes \( (Y,Z)_t \) and all the diagonal processes (via integral expressions) to be well-defined.
    \item[$-$] \cite{Wang2022} has considered $Z(s,s)$ in their BSVIEs but they do not depend on $(Y,Z)(t,s)$, which simplified the well-posedness analysis. Specifically, in this case, we would have both $g_Y$ and $g_Z$ in \eqref{SysYt} equal to zero. The nonlinear dependence of the BSVIE generator $g$ on $(Y,Z)(t,s)$ is however crucial for modelling and generality. Hence, to clarify, when we stress on the difficulty of studying $Z(s,s)$ in BSVIEs, we attempt to preserve the most general  dependence structure of the generator.
    \item[$-$] 
    % Although \cite{Hamaguchi2021} tackles the challenging issue of \( Z(s,s) \), it only establishes the well-posedness of solutions of BSVIEs that involve \( Y(s,s) \) but exclude \( Z(s,s) \). Consequently, the BSVIE system \eqref{General BSVIE}-\eqref{SysYt} becomes decoupled. One can first solve the original BSVIE of \( (Y,Z) \), making all coefficients in the derivative dynamics known and decoupling the system of BSVIEs. With some additional assumptions, the well-posed linear BSDE system \eqref{SysYt} parameterized by $t$ ensures that the derivative processes are well-defined, which also guarantees the well-definedness of the diagonal processes. 
    To the best of our knowledge, \cite{hernandez2023me, Hernandez2021a} were the only alternatives (to this paper) that attempts ${\bm Z(s,s)}$\textbf{-dependent BSVIEs}. However, their analytical framework requires a restrictive Lipschitz continuity assumption on \( g \) in \eqref{General BSVIE} as well as the generator in \eqref{SysYt}:
     \begin{equation} \label{nableg}
    \nabla g(t,s,(Y,Z)(t,s),(Y,Z)(s,s),(Y,Z)_t(t,s)) := g_t + g_Y \cdot Y_t(t,s) + g_Z \cdot Z_t(t,s) 
    \end{equation}
     with respect to \( (Y,Z)(t,s), (Y,Z)(s,s), (Y,Z)_t(t,s) \), uniformly over the other arguments. The Lipschitz continuity requirement on \( \nabla g \) appears to be impractical, since \( (Y,Z)_t(t,s) \) has yet to be determined and no prior estimates are available. Note that for the general case, \( g_Y \) and \( g_Z \) depend on both \( (Y, Z)(t, s) \) and \( (Y, Z)(s, s) \) and thus the Lipschitz coefficients of \( \nabla g \) with respect to \( (Y, Z)(t, s) \) and \( (Y, Z)(s, s) \) will depend on the unknown \( (Y, Z)_t(t, s) \). Hence, assuming that \( \nabla g \) is uniformly Lipschitz not only imposes conditions on $g$, but also peeks information about the undetermined \( (Y, Z)_t(t, s) \). This key assumption, going beyond the intrinsic parameters $(\xi,g)$, is difficult to verify. In practice, it appears that the only easily verifiable cases satisfying this condition are: (a) when \( g_Y \) and \( g_Z \) in \eqref{nableg} do not depend on \( (Y, Z)(t, s) \) or \( (Y, Z)(s, s) \) implying a linear generator \( g \) in \eqref{General BSVIE} on $(Y, Z)(t, s)$; (b) when \( (g,\xi) \) are independent of \(t\), implying from $(g_t,\xi_t)=(0,0)$ in \eqref{SysYt} that \((Y, Z)_t(t, s)=(0, 0) \), the \eqref{General BSVIE} in this case reduces to a trivial case, i.e., a family of BSDEs parameterized by $t$.
    \item[$-$] In the studies of \textbf{Markovian BSVIEs}, a PDE-based analytical approach is appealing; see \cite{Wang2021,lei2023nonlocal,LeiSDG}. In which, the conditions on the nonlinear homogeneous term in the associated PDEs are relaxed to allow the generator \( g \) of the BSVIE \eqref{General BSVIE} to exhibit nonlinear dependence on \((Y, Z)(t,s)\) and \((Y, Z)(s,s)\). This approach relies on the well-posedness of classical PDE solutions and It\^{o}'s formula, while it provides only global existence results for adapted solutions of Markovian BSVIEs. To establish the uniqueness of adapted solutions for a Markovian version of \eqref{General BSVIE}, the current study (\cite{Wang2021}) still imposes a restrictive condition that requires the generator to be linear in \( (Y,Z)(t,s) \). %Beyond this linearity constraint, the main limitation of the PDE-analytic approach is its applicability only to Markovian BSVIEs.
\end{itemize}
% The Lipschitz conditions on \( g \) and \( \nabla g \) set in \cite{hernandez2023me, Hernandez2021a} are specifically designed to facilitate the direct application of standard fixed-point arguments for proving the well-posedness of solutions of the nonlinear BSVIE system \eqref{General BSVIE}-\eqref{SysYt}. A natural idea is to extend the results in \cite{hernandez2023me, Hernandez2021a}, which handle BSVIEs with a linear dependence on \( (Y,Z)(t,s) \), by using linearization and fixed-point arguments to prove well-posedness for the nonlinear case. However, this approach is not feasible without a new analytical framework, as it fails to address the core issue: the generator \( \nabla g \) of \eqref{SysYt} is non-Lipschitz. Standard fixed-point arguments rely on Lipschitz assumptions, but any treatment involving \( Z(s,s) \) leads to non-Lipschitz \( (Y,Z)_t \)-BSVIE \eqref{SysYt}. In summary, the well-posedness of solutions for a general BSVIE \eqref{General BSVIE}—particularly for a non-Markovian BSVIE where the generator exhibits nonlinear dependence on both the solution process \( Y(t,s) \) and the martingale integrand component \( Z(t,s) \) of the adapted solutions, as well as their diagonal processes \( (Y,Z)(s,s) \)—remains an open problem. Addressing this challenge is the central contribution of this paper. 
\end{enumerate}

The Lipschitz conditions assumed in \cite{hernandez2023me, Hernandez2021a} are specifically designed to facilitate the application of standard fixed-point arguments for proving the well-posedness of solutions of the nonlinear BSVIE system \eqref{General BSVIE}-\eqref{SysYt}. One may think of leveraging linearization techniques to extend the results in \cite{hernandez2023me, Hernandez2021a} from linear to nonlinear generator. However, this approach is not feasible without a new analytical framework, as it fails to address a core issue: the generator \( \nabla g \) of \eqref{SysYt} is non-Lipschitz. The analytical pain point is that fixed-point arguments require Lipschitz assumptions while treating \( Z(s,s) \) leads to non-Lipschitz \( (Y,Z)_t \)-BSVIE \eqref{SysYt}.

The discussions above review the literature while identify a key challenge in solving general BSVIEs \eqref{General BSVIE}: ensuring the well-definedness of the diagonal process \(\{Z(s,s)\}_{0\leq s\leq T}\) of \eqref{General BSVIE} in a non-Markovian setting requires enhancing regularity in the \(t\)-direction to achieve differentiability, which necessitates the investigation of \eqref{SysYt} for \((Y,Z)_t\). On the other hand, applying the chain rule to \eqref{General BSVIE} reveals that \eqref{SysYt} is non-Lipschitz. Solving a \(Z(s,s)\)-dependent BSVIE thus inherently involves addressing a non-Lipschitz BSVIE system of \eqref{General BSVIE}-\eqref{SysYt}. 

In summary, though BSVIEs \eqref{General BSVIE} with nonlinear dependence on $(Y,Z)(t,s)$ and $(Y,Z)(s,s)$ were attempted with certain degeneration, the well-posedness of the solutions to a general BSVIE \eqref{General BSVIE} remains an open problem. Addressing this challenge is the central contribution of this paper. Our success is built on top of the following insights:
\begin{enumerate}
    \item Inspired by the extensions of classic Lipschitz-based BSDE theories, such as stochastic Lipschitz, non-locally Lipschitz, and quadratic BSDEs (see \cite{zhang2017backward,pardoux2014stochastic}), if we consider generators without sufficient regularity (e.g., in the non-uniformly Lipschitz case), we have to appropriately restrict the search space for potential solutions. By focusing on solutions within a smaller while more regular space, we can establish their existence and uniqueness. In the case with the general BSVIE \eqref{General BSVIE}, equivalent to considering the non-Lipschitz BSVIE system \eqref{General BSVIE}-\eqref{SysYt}, our goal is to identify more regular and bounded (continuous) solutions, rather than merely integrable ones. In other words, we must strike a balance between the intrinsic parameters $(\xi,g)$ of \eqref{General BSVIE} and its adapted solution \((Y, Z)\).
    \item We leverage the Malliavin differentiability to ensure that the martingale integrand components \((Z, Z_t)(t, s)\) can be expressed in terms of the trace of the Malliavin derivative of \((Y, Y_t)(t, s)\). Specifically, \((Z, Z_t)(t, s) = D_s(Y, Y_t)(t, s)\). We will see that by focusing on bounded solutions for the BSDE system \eqref{SysYYt}-\eqref{MalliavinYYt}, all \( Y \) and its related derivative processes, including the $t$-directional derivative \( Y_t \) and Malliavin derivatives \( D_\theta Y \) and \( D_\theta Y_t \) will all be bounded. Consequently, one can find that \( Z \) and its $t$-derivative processes \( Z_t \) are likewise bounded. The boundedness of these solution processes and the martingale integrand processes greatly simplifies the complexity introduced by the non-linearity and non-Lipschitzness of \eqref{SysYt}. It is noteworthy that unlike many studies on the well-posedness of BSDEs with quadratic growth in \( Z \), where the comparison principle plays a crucial role for finding bounded solutions and thus restricts \( Y \) to being scalar, we have no such limitation while we allow \( Y \) and \( Z \) of \eqref{General BSVIE} to be of arbitrary dimensions. 
    % In fact, as demonstrated in the literature [reference], which utilizes BSDEs and deep learning to solve high-dimensional PDEs, one major application of BSDEs is to provide a probabilistic interpretation for PDE solutions, helping to avoid the curse of dimensionality often encountered in numerical experiments. Our theory allows for arbitrary dimensions, and in Section 5, we link the adapted solution of BSVIEs to the solution of the corresponding PDE, laying the groundwork for future numerical work. A potential application is the numerical solution of high-dimensional time-inconsistent stochastic control problems.
    \item A merit of leveraging a Malliavin calculus approach is that we can show \((Y, Z)(t, s)\) to be almost surely continuous in the \((t, s)\)-direction under suitable regularity assumptions on \((\xi, g)\). As a result, the intuitive definition of the diagonal processes \((Y,Z)(s, s):=(Y,Z)(s, \tau)|_{\tau=s}=(Y,D_\tau Y)(s, \tau)|_{\tau=s}=\lim_{\tau\to s^+}(Y,D_\tau Y)(s,\tau)\) becomes well-defined. This result differs from earlier approaches in \cite{Hamaguchi2021, hernandez2023me, Hernandez2021a}, where diagonal processes were defined using \(t\)-directional regularity. It also contrasts with the method of constructing adapted solutions in the Markovian case using PDEs, as seen in \cite{Wang2021, lei2023nonlocal, LeiSDG, lei2023well}. The continuity of \(s \mapsto (Y, Z)(t, s)\) is also crucial for deducing the rate of convergence for numerical schemes, as highlighted in \cite{hu2011malliavin}. Therefore, this paper contributes to the BSVIE field by not only proving the well-posedness of general BSVIEs but also offering a new perspective on dealing with the diagonal process \(\{(Y, Z)(s, s)\}_{0 \leq s \leq T}\) of \eqref{General BSVIE}.
    
    % This result is quite different from the earlier approach in \cite{Hamaguchi2021,hernandez2023me,Hernandez2021a} where diagonal processes were defined using \(t\)-directional regularity. Additionally, it is also distinct from the method of constructing adapted solutions in the Markovian case using PDEs in \cite{Wang2021,lei2023nonlocal,LeiSDG,lei2023well}. Such pointwise behavior (rather than just integrability) of \(s \mapsto (Y, Z)(t, s)\) is crucial for the rate estimates of numerical schemes, as also discussed in \cite{hu2011malliavin}. Therefore, this paper contributes to the BSVIE field not only by proving the well-posedness of general BSVIEs but also by providing a new perspective on the diagonal process \(\{(Y, Z)(s, s)\}_{0 \leq s \leq T}\).
\end{enumerate}

%The successful establishment of the well-posedness for solutions to general BSVIEs is not only of theoretical significance but also demonstrates practical value in various applications.
The main contribution of this paper is threefold: (1) This paper establishes, for the first time, the well-posedness of solutions to general BSVIEs, where the generator exhibits nonlinear dependence on both the solution process, the martingale integrand, and their diagonal processes. The methodology we used presents a novel perspective in handling the diagonal processes (via Malliavin calculus). (2) We connect BSVIE solutions defined with a diffusion process with classical solutions of corresponding semi-linear PDEs. Specifically, the unique adapted solution of the Markovian BSVIE provides a probabilistic interpretation of the unique classical solution of the associated PDE, rending a Feynman-Kac-type result, which has been found to be crucial in developing Monte-Carlo-based deep-learning solver for (high-dimensional) BSVIE and PDEs (see \cite{Beck2019}).
%This strengthens the relationship between PDEs and SDEs and lays a foundation for deep learning algorithms to solve high-dimensional equilibrium HJB-type PDEs.
(3) We employ BSVIEs to establish dynamically optimal controls for mean-variance portfolio selection problems under various stochastic volatility models. Our well-posedness results guarantee that these equilibrium solutions are both well-defined and solvable. Additionally, we observe that the myopic and intertemporal hedging demands of the dynamically optimal investment policy are characterized by the two diagonal processes of the solution and the martingale integrand components in the BSVIE solutions, respectively. Our rigorous treatment of the diagonal processes enables us to adopt the dynamically optimal portfolios within a stochastic investment environment.

The remainder of this paper is organized as follows. Section \ref{sec:Well} first introduces the specific norms and Banach spaces suited for general BSVIEs and then examines the differentiation of BSVIE solutions with respect to both the parameter \( t \) and the Wiener space. On top of this, we apply Banach's fixed-point theorem to establish our main theoretical result: the existence, uniqueness, and regularity of BSVIE solutions. Some well-posedness results are also appropriately extended to more general settings. Section \ref{sec:MarkovianBSVIEs} applies our theoretical results to Markovian BSVIEs defined on a forward diffusion process. We provide an explicit representation of the Malliavin derivatives, followed by connecting between the adapted solutions of Markovian BSVIEs and the classical solutions of the corresponding semi-linear PDEs, extending the well-known Feynman-Kac formula to a nonlocal setting. Besides, our BSVIE well-posedness results support the study of TIC stochastic control problems in Section \ref{Sec: TICAPP}. Specifically, we use BSVIEs to formulate dynamically optimal mean-variance portfolio policies in incomplete markets. Finally, Section \ref{Sec: Conclusions} concludes.

%--------------------------------------------------------------------------------
%--------------------------------------------------------------------------------

\section{Well-posedness of Solutions of BSVIEs} \label{sec:Well}
This section is devoted to prove the existence, uniqueness, and regularity of solutions to general BSVIEs \eqref{General BSVIE} over an arbitrary time horizon. Due to the distinct structure of BSVIEs, i.e., its dependence on diagonal processes, we need to specify an appropriate space for a rigorous well-posedness analysis. We begin with defining specialized norms and Banach spaces for BSVIEs and then examine differentiability properties, including $t$-directional derivatives and Malliavin differentiation in the Wiener space. Then, we establish the well-posedness results and provide some extensions.

%--------------------------------------------------------------------------------

\subsection{Norms and Banach Spaces} \label{subsec:Norms}
Before introducing the Banach spaces for studying BSVIEs, we first introduce the following essential components, particularly the norms commonly used in previous studies on BSDEs:
\begin{itemize}
    \item \(C^l_b(\mathbb{R}^d; \mathbb{R}^k)\): the set of \(l\)-times continuously differentiable functions from \(\mathbb{R}^k\) to \(\mathbb{R}^q\), where both the functions and their partial derivatives up to order \(l\) are bounded;
    \item $L^p(0,T;\mathbb{R}^k)$: the set of Lebesgue measurable functions \(f: (0, T) \to \mathbb{R}^k\) such that $\int_0^T | f(s) |^p \, ds < \infty$ for \(1 \leq p < \infty\);
    \item \(L^\infty(0, T; \mathbb{R}^k)\): the set of Lebesgue measurable functions \(f: (0, T) \to \mathbb{R}^k\) that are essentially bounded;
    \item $L^p_{\mathcal{G}}(\Omega;\mathbb{R}^k)$: the set of all \(\mathbb{R}^k\)-valued $\mathcal{G}$-measurable random variable \(\xi\) such that \(\mathbb{E}[|\xi|^p] < \infty\);
    \item \(L^\infty_{\mathcal{G}}(\Omega; \mathbb{R}^k)\): the set of all \(\mathbb{R}^k\)-valued \(\mathcal{G}\)-measurable essentially bounded random variables;
    \item $L^p_{\mathbb{F}}(0,T;\mathbb{R}^k)$: the space of all \(\mathbb{F}\)-adapted \(\mathbb{R}^k\)-valued processes \(\{X(s)\}_{0\leq s\leq T}\) such that \(\mathbb{E}\big[ \int_0^T | X(s) |^p ds\big] < \infty\);
    \item \(L^\infty_{\mathbb{F}}(0, T; \mathbb{R}^d)\): the space of all \(\mathbb{F}\)-adapted \(\mathbb{R}^k\)-valued essentially bounded processes,
    % \item $H^p_{\mathbb{F}}(0,T;\mathbb{R}^k)$~~ --- ~~the set of all \(\mathbb{F}\)-adapted \(\mathbb{R}^k\)-valued processes normed by \(\|X(s)\|^p_{H^p}=\mathbb{E}\big[\big(\int_0^T | X(s) |^2 ds\big)^{p/2}\big] < \infty\). The space $H^\infty_{\mathbb{F}}(0,T;\mathbb{R}^k)$ consists of all \(\mathbb{F}\)-adapted \(\mathbb{R}^k\)-valued processes with \(\text{ess sup}\int_0^T | X(s) |^2 ds < \infty\). 
    % \item $\textsc{BMO}^2_\mathbb{F}(0,T;\mathbb{R}^d)$~~ --- ~~ the space of square integrable martingales $\Phi$ with $\Phi_0=0$ and satisfying $$\|\Phi\|_{\textsc{BMO}}=\sup\limits_\tau\left\|\mathbb{E}\left[\left.\langle\Phi\rangle_T-\langle\Phi\rangle_\tau\right|\mathcal{F}_\tau\right]\right\|^{1/2}_\infty<\infty,$$
    % where the supremum is taken over all stopping times $\tau\in[0,T]$ and $\langle\Phi\rangle$ is the quadratic variation (process) of $\Phi$. 
    % \item $S^\textsc{BMO}_\mathbb{F}(0,T;\mathbb{R}^m)$ ~~ --- ~~ the space of all \(\{\mathcal{F}_s\}_{s \geq 0}\)-adapted \(\mathbb{R}^m\)-valued processes $X(\cdot)$ such that $\int^\cdot_0X(s)dW(s)\in\textsc{BMO}^2_\mathbb{F}(0,T;\mathbb{R}^d)$ with the norm $\|X\|_{\mathcal{S}^{\textsc{BMO}}}=\|\int X dW\|_{\textsc{BMO}}$.   
\end{itemize}
where \( | \cdot | \) denotes the Euclidean norm: for a vector \(u \in \mathbb{R}^k \), \(|u|=\big( \sum_{i=1}^k u_i^2 \big)^{1/2} \), and for a matrix \( u \in \mathbb{R}^{k \times n} \), \(|u|=\big( \sum_{j=1}^n \sum_{i=1}^k u_{ij}^2 \big)^{1/2} \). If needed, scalars or vectors can be treated as matrices, allowing us to use the notation \( | \cdot | \) with slight abuse. Thus, the spaces above can be naturally extended to matrix-valued (or Banach-space-valued) random variables or stochastic processes. Additionally, we introduce BMO (bounded mean oscillation) martingales and their associated norm, which are commonly used in the literature on quadratic BSDEs, as discussed in \cite{kobylanski2000backward, zhang2017backward}:
\begin{itemize}
    \item $\textsc{BMO}^2_\mathbb{F}(0,T;\mathbb{R}^{k})$: the space of square integrable martingales $\Phi$ with $\Phi_0=0$ and
    \begin{equation}  \label{BMO}
    \|\Phi\|^2_{\textsc{BMO}}=\sup\limits_\tau\left\|\mathbb{E}\left[\left.\langle\Phi\rangle_T-\langle\Phi\rangle_\tau\right|\mathcal{F}_\tau\right]\right\|_\infty<\infty,
    \end{equation}
    where the supremum is taken over all stopping times $\tau\in[0,T]$ and $\langle\Phi\rangle$ is the quadratic variation (process) of $\Phi$. 
    \item $S^\textsc{BMO}_\mathbb{F}(0,T;\mathbb{R}^{k\times n})$: the space of all \(\mathbb{F}\)-adapted \(\mathbb{R}^{k\times n}\)-valued processes $\{Z(s)\}_{0\leq s\leq T}$ such that $\int^\cdot_0Z(s)dB(s)\in\textsc{BMO}^2_\mathbb{F}(0,T;\mathbb{R}^k)$ with the norm $\|Z\|^2_{\mathcal{S}^{\textsc{BMO}}}=\|\int Z dB\|^2_{\textsc{BMO}}$.   
\end{itemize}

A BMO martingale is a martingale with bounded mean oscillation, meaning its quadratic variation process is uniformly controlled. The BMO norm quantifies this oscillation, measuring how much the martingale deviates from its mean. The space \( S^\textsc{BMO}_\mathbb{F}(0,T;\mathbb{R}^{k\times n}) \) is closely related to BMO martingales. We now discuss two key properties of this space to deepen our understanding.
\begin{lemma}
     If $\int^T_0|Z(s)|^2ds$ is integrable, then 
    \begin{equation} \label{SBMO}
        \|Z\|^2_{\mathcal{S}^{\textsc{BMO}}}=\sup\limits_{0\leq s\leq T}\Big\|\mathbb{E}\Big[\int\nolimits_s^T |Z(s) |^2\Big|\mathcal{F}_s\Big]\Big\|_\infty. 
    \end{equation}
    If $Z\in S^\textsc{BMO}_\mathbb{F}(0,T;\mathbb{R}^{k\times n})$ then one has automatically that $Z\in L^p_{\mathbb{F}}(0,T;\mathbb{R}^{k\times n})$ for all $p\geq 2$. Furthermore, the inclusion $L^\infty_\mathbb{F}\subsetneq S^{\textsc{BMO}}_\mathbb{F}\subsetneq \cap_{p>1}L^{p}_\mathbb{F}$ holds. 
    
%     % \begin{enumerate}
%     % \item If $\int^T_0\|Z_s\|^2ds$ is integrable, then 
%     % \begin{equation} \label{SBMO}
%     %     \|Z\|^2_{\mathcal{S}^{\textsc{BMO}}}=\sup\limits_{0\leq s\leq T}\Big\|\mathbb{E}\Big[\int\nolimits_s^T\|Z_s\|^2\Big|\mathcal{F}_s\Big]\Big\|_\infty. 
%     % \end{equation}
%     % \item If $\int^\cdot_0Z(s)dB(s)\in \textsc{BMO}^2_\mathbb{F}(0,T;\mathbb{R}^k)$ then one has automatically that $Z\in H^p_{\mathbb{F}}(0,T;\mathbb{R}^{k\times n})$ for all $p\geq 2$. Furthermore, the inclusion $H^\infty\subsetneq S^{\textsc{BMO}}_\mathbb{F}\subsetneq \cap_{p>1}H^{p}$ holds.  
%     % % \item Furthermore, the inclusion $H^\infty\subsetneq S^{\textsc{BMO}}_\mathbb{F}\subsetneq \cap_{p>1}H^{p}$ holds.  
%     % % \begin{equation*}
%     % %     H^\infty\subsetneq S^{\textsc{BMO}}_\mathbb{F}\subsetneq \cap_{p>1}H^{p}.    
%     % % \end{equation*}   
% \end{enumerate}
\end{lemma}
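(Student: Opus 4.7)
The plan is to verify the three assertions separately: the alternative representation \eqref{SBMO}, the inclusion $S^{\textsc{BMO}}_\mathbb{F}\subset L^p_\mathbb{F}$ for $p\geq 2$, and the strict inclusions $L^\infty_\mathbb{F}\subsetneq S^{\textsc{BMO}}_\mathbb{F}\subsetneq\bigcap_{p>1}L^p_\mathbb{F}$. I would tackle them in this order and expect the second step to be the principal technical obstacle.

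First I would specialize \eqref{BMO} to the martingale $\Phi(\cdot)=\int_0^\cdot Z(u)\,dB(u)$, whose quadratic variation is $\langle\Phi\rangle_t=\int_0^t|Z(u)|^2\,du$, so that $\|Z\|_{\mathcal{S}^{\textsc{BMO}}}^2=\sup_\tau\|\mathbb{E}[\int_\tau^T|Z(u)|^2\,du\mid\mathcal{F}_\tau]\|_\infty$. The inequality $\geq\sup_{s\in[0,T]}$ is immediate since deterministic times are stopping times. For the reverse, I would approximate an arbitrary stopping time $\tau$ from above by simple stopping times $\tau_n=\sum_i t_i\mathbf{1}_{A_i}$, with $A_i=\{\tau_n=t_i\}\in\mathcal{F}_{t_i}$ and $\tau_n\downarrow\tau$. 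On each $A_i$ one has $\mathbb{E}[\int_{\tau_n}^T|Z|^2\,du\mid\mathcal{F}_{\tau_n}]=\mathbb{E}[\int_{t_i}^T|Z|^2\,du\mid\mathcal{F}_{t_i}]$, uniformly bounded by the deterministic supremum; passing $n\to\infty$ via right-continuity of $\mathbb{F}$ together with dominated convergence then recovers the bound for $\tau$.

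For the $L^p$ integrability, I would introduce $A_s:=\int_s^T|Z(u)|^2\,du$ and prove inductively that $\|\mathbb{E}[A_s^n\mid\mathcal{F}_s]\|_\infty\leq n!\,\|Z\|_{\mathcal{S}^{\textsc{BMO}}}^{2n}$ for every $n\geq 1$. The base $n=1$ is the definition of the norm; the inductive step relies on the calculus identity $A_s^n=n\int_s^T A_u^{n-1}|Z(u)|^2\,du$, combined with the tower property and the inductive hypothesis. Setting $s=0$ yields $\mathbb{E}[(\int_0^T|Z|^2\,du)^n]\leq n!\,\|Z\|_{\mathcal{S}^{\textsc{BMO}}}^{2n}$, and coupling this Garsia-type control with Burkholder--Davis--Gundy and H\"older's inequality to interpolate the exponent then delivers $L^p$-integrability for every $p\geq 2$. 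This translation from moments of $\int_0^T|Z|^2\,du$ to $L^p$-integrability of $Z$ itself in the sense of the paper is the most delicate part of the argument: the induction is clean, but relaying it through BDG and H\"older to obtain uniform control in $p$ is where the proof does most of its work.

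For the strict inclusions, $L^\infty_\mathbb{F}\subset S^{\textsc{BMO}}_\mathbb{F}$ is immediate since $|Z|\leq C$ gives $\mathbb{E}[\int_s^T|Z|^2\,du\mid\mathcal{F}_s]\leq C^2T$; strictness is witnessed by the deterministic $Z(s)=\log(1/(T-s))$, which is unbounded yet satisfies $\sup_s\int_s^T\log^2(1/(T-u))\,du=\int_0^T\log^2(1/v)\,dv<\infty$. The embedding $S^{\textsc{BMO}}_\mathbb{F}\subset\bigcap_{p>1}L^p_\mathbb{F}$ is the content of the previous paragraph, and strictness is exemplified by $Z(s)=B(s)$: Brownian moments guarantee $Z\in L^p_\mathbb{F}$ for every $p$, while the explicit formula $\mathbb{E}[\int_s^T B(u)^2\,du\mid\mathcal{F}_s]=B(s)^2(T-s)+(T-s)^2/2$ has infinite essential supremum, forcing $\|Z\|_{\mathcal{S}^{\textsc{BMO}}}=\infty$.
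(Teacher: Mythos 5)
Your proposal is correct, but it is worth noting that the paper itself offers no proof of this lemma: it delegates the hard direction of \eqref{SBMO} (that the supremum over deterministic times controls the supremum over stopping times) to \cite[Lemma 1.1]{song2017multi}, and the remaining assertions to the monographs \cite{kazamaki2006continuous,frei2013quadratic}. What you have written is essentially a self-contained reconstruction of those cited arguments. Your discretization of an arbitrary stopping time $\tau$ by $\tau_n\downarrow\tau$ taking countably many values, combined with the identity $\mathbb{E}[\int_{\tau_n}^T|Z|^2\,du\mid\mathcal{F}_{\tau_n}]=\mathbb{E}[\int_{t_i}^T|Z|^2\,du\mid\mathcal{F}_{t_i}]$ on $\{\tau_n=t_i\}$ and monotone convergence over sets $A\in\mathcal{F}_\tau$, is exactly the standard route to the inverse inequality. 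Your induction $\|\mathbb{E}[A_s^n\mid\mathcal{F}_s]\|_\infty\le n!\,\|Z\|_{\mathcal{S}^{\textsc{BMO}}}^{2n}$ via $A_s^n=n\int_s^TA_u^{n-1}|Z(u)|^2\,du$, Tonelli, and the tower property is Kazamaki's energy inequality verbatim, and the two witnesses for strictness (a deterministic unbounded but square-integrable $Z$, and $Z=B$ whose conditional second moment $B(s)^2(T-s)+(T-s)^2/2$ has infinite essential supremum) are both valid. So the approach is not different in substance from what the paper relies on; it simply makes explicit what the paper outsources.

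One caveat on the $L^p$ step. Your energy inequality yields $\mathbb{E}[(\int_0^T|Z|^2\,du)^{n}]\le n!\,\|Z\|_{\mathcal{S}^{\textsc{BMO}}}^{2n}$, hence finiteness of $\mathbb{E}[(\int_0^T|Z|^2\,du)^{p/2}]$ for all $p\ge 2$ after a Jensen interpolation; this is the correct and standard content of the embedding $\textsc{BMO}\subset\mathcal{H}^p$, and no appeal to Burkholder--Davis--Gundy is needed for the integrand $Z$ itself. However, it does \emph{not} yield $\mathbb{E}[\int_0^T|Z(u)|^p\,du]<\infty$, which is how $L^p_{\mathbb{F}}$ is literally defined earlier in the paper: the deterministic process $Z(s)=s^{-1/3}$ lies in $S^{\textsc{BMO}}_{\mathbb{F}}$ but has $\int_0^T|Z(s)|^3\,ds=\infty$. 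The inclusion in the lemma must therefore be read with the $\mathcal{H}^p$-type norm $\mathbb{E}[(\int_0^T|Z|^2\,du)^{p/2}]$, which is what your argument actually proves; the attempted ``translation'' to the paper's pointwise-in-$s$ definition via H\"older would go in the wrong direction and should be dropped rather than patched.
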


The key difference between \eqref{BMO} and \eqref{SBMO} lies in the supremums taken over their respective domains. Specifically, the supremum over \( \{ 0 \leq s \leq T \} \) is smaller than that over \( \{ 0 \leq \tau \leq T \} \). A proof of this inverse inequality can be found in \cite[Lemma 1.1]{song2017multi}. Additionally, the BMO-associated space \( \mathcal{S}^\textsc{BMO}_\mathbb{F}(0,T;\mathbb{R}^{k \times n}) \) exhibits sufficient integrability while it is small but still slightly larger than the bounded space. We refer the readers to \cite{kazamaki2006continuous, frei2013quadratic} for more details.

% The main difference between \eqref{BMO} and \eqref{SBMO} lies in the different supremums taken over their respective domains. Clearly, the supremum over \(\{0 \leq s \leq T\}\) is smaller than the supremum over \(\{0 \leq \tau \leq T\}\). For a proof of this inverse inequality, see Lemma 1.1 in \cite{song2017multi}. The second point demonstrates that the BMO-associated space \(\mathcal{S}^\textsc{BMO}_\mathbb{F}(0,T;\mathbb{R}^{k\times n})\) possesses sufficient integrability, being sufficiently small yet slightly larger than the bounded space. For more details, refer to \cite{kazamaki2006continuous,frei2013quadratic}.
% The noteworthy distinction lies in the different supremums taken over the respective domains. Clearly, the supremum over \(\{0 \leq s \leq T\}\) is smaller than the supremum over \(\{0 \leq \tau \leq T\}\). For the proof of this inverse inequality, refer to Lemma 1.1 in \cite{song2017multi}. The second inclusion relationship shows that the BMO-associated space $\mathcal{S}^\textsc{BMO}_\mathbb{F}(0,T;\mathbb{R}^m)$ is sufficiently small, yet slightly larger than the bounded space. \cite{kazamaki2006continuous} \cite{frei2013quadratic}. 

\subsection*{Malliavin differentiation on the Wiener space} 
% \textbf{(Malliavin differentiation on the Wiener space)} 
In addition to examining \( t \)-directional differentiability to ensure that the diagonal process \( \{ Z(s,s) \}_{0 \leq s \leq T} \) is well-defined, we shall also explore the differentiability properties for solutions of BSVIEs on the Wiener space. To this end, we provide some preliminaries on the Malliavin calculus and refer the readers to \cite{nualart2006malliavin, hu2011malliavin, Karoui1997} for further details.

Let $\mathcal{H}$ denote the set of random variables $\eta$ of the form $\eta=\varphi(B(h^1),B(h^2),\cdots,B(h^k))$, where $\varphi\in C^\infty_b(\mathbb{R}^k,\mathbb{R})$, $h^1,h^2,\cdots,h^k\in L^2([0,T];\mathbb{R}^n)$, and $B(h^i)=\int^T_0 h^i(s)dB(s)$. If $\eta\in\mathcal{H}$ is of the above form, we define its derivative as being the $n$-dimensional process 
\begin{equation*}
    D_\theta\eta=\sum\limits_{j=1}^k\partial_i\varphi(B(h^1),\cdots,B(h^k))h^j(\theta), \quad 0\leq \theta\leq T,  
\end{equation*}
where $\partial_i\varphi$ denotes the partial derivative of \(\varphi\) with respect to its \(i\)-th argument. It has been shown in\cite{nualart2006malliavin} that the operator $D$ has a closed extension to the space $\mathbb{D}^{1,2}$, the closure of $\mathcal{H}$ with respect to the norm $\|\cdot\|_{1,2}$ defined by $\|\eta\|^2_{1,2}:=\mathbb{E}\Big[|\eta|^2+\int\nolimits_0^T|D_\theta\eta|^2d\theta\Big]$ for $\eta\in\mathcal{H}$, $p>1$. 
%, we introduce a norm
%\begin{equation*}
%\|\eta\|^2_{1,2}:=\mathbb{E}\Big[|\eta|^2+\int\nolimits_0^T|D_\theta\eta|^2d\theta\Big].
%\end{equation*}
It is clear that $D_\theta\eta=0$ for $\theta\in(s,T]$ if $\eta$ is $\mathcal{F}_s$-measurable. Furthermore, we use $L^2(0,T;(\mathbb{D}^{1,2})^k)$ to denote the set of $\mathbb{R}^k$-valued progressively measurable processes $u=\{u(s)\}_{0\leq s\leq T}$ such that: 
\begin{enumerate}
    \item For almost all $s\in[0,T]$, $u(s)\in(\mathbb{D}^{1,2})^k$. 
    \item $(s,\omega)\to Du(s,\omega)\in (L^2(0,T;\mathbb{R}))^{n\times k}$ admits a progressively measurable version.
    \item $\mathbb{E}\big[\int^T_0|u(s)|^2ds+\int^T_0\int^T_0|D_\theta u(s)|^2d\theta ds\big]<\infty$. 
\end{enumerate}
Observe that for each $(\theta,s,\omega)$, $D_\theta u(t,\omega)$ is an $n\times k$ matrix. Clearly, $D_\theta u(t,\omega)$ is defined uniquely up to sets of $d\theta\times ds\times dP$ of measure zero. 

\subsection{Banach Spaces for BSVIE Solutions}
We are now ready to introduce the norms and Banach spaces tailored for studying BSVIEs \eqref{General BSVIE}:
\begin{itemize}
    \item $\Lambda^\infty_{\mathcal{F}_T}([0,T]\times\Omega;\mathbb{R}^k)$: the set of bounded \( \mathbb{R}^k \)-valued \( \mathcal{F}_T \)-measurable random variables \( \xi(t) \) with regular \( t \)-directional and Malliavin differentiability satisfying
    \begin{equation*}
        \sup\limits_{0\leq\theta\leq T}\sup\limits_{0\leq t \leq T}\big\{\|\xi(t)\|_{\infty}+\|\xi_t(t)\|_{\infty}+\|D_\theta\xi(t)\|_{{\infty}}+\|D_\theta\xi_t(t)\|_{\infty}\big\}<\infty.
    \end{equation*}
    \item $\Lambda^\infty_{\mathbb{F},c}(\Delta[t_0,T]\times\Omega;\mathbb{R}^k)$: the set of \( \mathbb{F} \)-adapted \( \mathbb{R}^k \)-valued processes \( \{ Y(t, s) \}_{t_0 \leq t \leq s \leq T} \), which are \( t \)-directional and Malliavin differentiable, with mixed partial derivatives existing, bounded in \( (t, s, \theta) \), continuous in \( s \), and satisfying
    \begin{equation*}
        \sup\limits_{0\leq\theta\leq T}\sup\limits_{t_0\leq t\leq s\leq T}\big\{\|Y(t,s)\|_{\infty}+\|Y_t(t,s)\|_{\infty}+\|D_\theta Y(t,s)\|_{{\infty}}+\|D_\theta Y_t(t,s)\|_{\infty}\big\}<\infty.
    \end{equation*}
    \item \(\Theta^{\textsc{BMO}}_{\mathbb{F}}(\Delta[t_0,T]\times\Omega;\mathbb{R}^{k\times n})\): the set of \( \mathbb{F} \)-adapted \( \mathbb{R}^{k \times n} \)-valued processes \( \{ Z(t, s) \}_{t_0 \leq t \leq s \leq T} \), where these processes, along with their \( t \)-directional and Malliavin derivatives, are required to be \( L^2 \)-integrable with respect to \( s \), be uniformly bounded in the other arguments, and satisfy
    \begin{equation*}
        \begin{split}
            & \sup\limits_{0\leq\theta\leq T}\sup\limits_{t_0\leq t\leq s\leq T}\Big\{\Big\|\mathbb{E}\Big[\int\nolimits_s^T|Z(t,\tau)|^2d\tau\Big|\mathcal{F}_s\Big]\Big\|_\infty+\Big\|\mathbb{E}\Big[\int\nolimits_s^T|Z_t(t,\tau)|^2d\tau\Big|\mathcal{F}_s\Big]\Big\|_\infty \\
            & \qquad\qquad 
            +\Big\|\mathbb{E}\Big[\int\nolimits_s^T|D_\theta Z(t,\tau)|^2d\tau\Big|\mathcal{F}_s\Big]\Big\|_\infty+\Big\|\mathbb{E}\Big[\int\nolimits_s^T|D_\theta Z_t(t,\tau)|^2d\tau\Big|\mathcal{F}_s\Big]\Big\|_\infty \Big\}<\infty.
        \end{split}
    \end{equation*}
    Similarly, one can define a set of vector-valued processes, \(\Theta^{\textsc{BMO}}_{\mathbb{F}}(\Delta[t_0,T]\times\Omega;\mathbb{R}^{k})\). 
    \item \(\Theta^{\textsc{BMO}}_{\mathbb{F},b}(\Delta[t_0,T]\times\Omega;\mathbb{R}^{k\times n})\): a subspace of \( \Theta^{\textsc{BMO}}_{\mathbb{F}}(\Delta[t_0,T] \times \Omega; \mathbb{R}^{k \times n}) \), where the mapping \( s \mapsto (Z, Z_t)(t,s) \) is required to be not only integrable but also uniformly bounded across \( \omega \in \Omega \). This subspace is equipped with the norm
    \begin{equation*}
        \begin{split}
            & \sup\limits_{0\leq\theta\leq T}\sup\limits_{t_0\leq t\leq s\leq T}\Big\{\|Z(t,s)\|_{\infty}+\|Z_t(t,s)\|_{\infty} \\
            & \qquad\qquad 
            +\Big\|\mathbb{E}\Big[\int\nolimits_s^T|D_\theta Z(t,\tau)|^2d\tau\Big|\mathcal{F}_s\Big]\Big\|_\infty+\Big\|\mathbb{E}\Big[\int\nolimits_s^T|D_\theta Z_t(t,\tau)|^2d\tau\Big|\mathcal{F}_s\Big]\Big\|_\infty \Big\}<\infty.
        \end{split}
    \end{equation*}
    \end{itemize}
    
    One of the most distinct features of BSDEs is that their solutions consist of two adapted processes. Therefore, let us consider some product spaces formed from the Banach spaces:
    \begin{itemize}
    \item $\mathcal{N}[t_0,T]$: the product space $(\Lambda_c\otimes\Theta)[t_0,T]$ defined by  
    \begin{equation*}
    (\Lambda_c\otimes\Theta)[t_0,T]=\Lambda^\infty_{\mathbb{F},c}(\Delta[t_0,T]\times\Omega;\mathbb{R}^k)\times\Theta^{\textsc{BMO}}_{\mathbb{F}}(\Delta[t_0,T]\times\Omega;\mathbb{R}^{k\times n}), 
    \end{equation*}
    equipped with the norm
    \begin{equation*}
    \|(Y,Z)\|^2_{\mathcal{N}[t_0,T]}=\|Y\|^2_{\Lambda^\infty_{\mathbb{F},c}(\Delta[t_0,T])}+\|Z\|^2_{\Theta^{\textsc{BMO}}_{\mathbb{F}}(\Delta[t_0,T])}. 
    \end{equation*}
    
    \item $\mathcal{M}[t_0,T]$: a sub-product space of $\mathcal{N}[t_0,T]$ defined by  
    \begin{equation*}
        (\Lambda_c\otimes\Theta_b)[t_0,T]=\Lambda^\infty_{\mathbb{F},c}(\Delta[t_0,T]\times\Omega;\mathbb{R}^k)\times\Theta^{\textsc{BMO}}_{\mathbb{F},b}(\Delta[t_0,T]\times\Omega;\mathbb{R}^{k\times n}),
    \end{equation*}
    equipped with the norm
    \begin{equation*}
        \|(Y,Z)\|^2_{\mathcal{M}[t_0,T]}=\|Y\|^2_{\Lambda^\infty_{\mathbb{F},c}(\Delta[t_0,T])}+\|Z\|^2_{\Theta^{\textsc{BMO}}_{\mathbb{F},b}(\Delta[t_0,T])}.
    \end{equation*}
    
    \item %$\mathcal{M}_c[t_0,T]$: a subset of $\mathcal{M}[t_0,T]$ defined by 
    $\mathcal{M}_c[t_0,T]:=\big\{(Y,Z)\in\mathcal{M}[t_0,T]:s\mapsto(Y,Z)(t,s)\text{~is~continuous}\big\}$. 
\end{itemize}

More generally, suppose \( \{\mathcal{B}_i, i \in I\} \) is a collection of Banach spaces with norms \( \|\cdot\|_i \). The Cartesian product \( \otimes_i \{\mathcal{B}_i\} \) is a Banach space with the norm \( \|v\| \equiv \sum_i \|v_i\|_i \). Clearly, these tailored Banach spaces $\mathcal{N}$ and $\mathcal{M}$ for BSVIEs \eqref{General BSVIE} build on the previously defined normed spaces of bounded and BMO processes. They account not only for the pointwise and integrability regularities of the stochastic processes but also for their \( t \)-directional and Malliavin differentiability. The regularity conditions in all directions ensure that the BSVIEs \eqref{General BSVIE} in these spaces are well-behaved and well-posed.

% More generally, suppose $\{\mathcal{B}_i,i\in I\}$ is a collection of Banach spaces normed by $\|\cdot\|_{i}$, the Cartesian product $\otimes_i\{\mathcal{B}_i\}$ is a Banach space with the norm $\|v\|\equiv\sum_i\|v_i\|_i$. Clearly, these tailored Banach spaces for BSVIEs build upon the previously established normed spaces of bounded and BMO processes. Additionally, these spaces account not only for the pointwise and integrability regularities of the stochastic processes themselves but also for their \(t\)-directional and Malliavin differentiability. The regularity conditions in each direction collectively ensure that the BSVIEs \eqref{General BSVIE} residing within these spaces are well-behave and well-posed.  

%--------------------------------------------------------------------------------

\subsection{Differentiation of BSVIE Solutions} \label{subsec:Diff}
To prove the well-posedness of BSVIE solutions, we conduct some prior analyses by presuming the existence of a solution to \eqref{GeneralCase} in the space \(\mathcal{M}[0,T]\). We then examine a stochastic system satisfied by the adapted solution pair \((Y,Z)(t,s)\) along with its \(t\)-directional derivative \((Y,Z)_t(t,s)\), Malliavin derivative \(D_\theta(Y,Z)(t,s)\), and their mixed derivatives \(D_\theta(Y,Z)_t(t,s)\). %This analysis will aid us in demonstrating the well-posedness of solutions of BSVIEs. 

Let us consider a general BSVIE of the form:
\begin{equation} \label{GeneralCase}
    \begin{cases}
        \begin{aligned}
            dY(t,s) & ~=~ g(t,s,Y(t,s),Z(t,s),Y(s,s),Z(s,s))ds+Z(t,s)dB(s), \\
       Y(t,T) & ~=~ \xi(t), \quad 0\leq t \leq s\leq T, 
        \end{aligned}  
    \end{cases}
\end{equation} 
where the random generator $g:\Delta[0,T]\times\Omega\times\mathbb{R}^{2k}\times\mathbb{R}^{2(k\times n)} \to \mathbb{R}^k$ and the $\mathcal{F}_T$-measurable terminal data \(\xi:[0,T]\times\Omega\to\mathbb{R}^k\) satisfy the following assumptions: 
\begin{enumerate}[start=1,label={\upshape(\bfseries A\arabic*).}]
    \item \(\xi(t)\in\Lambda^\infty_{\mathcal{F}_T}([0,T]\times\Omega;\mathbb{R}^k)\). \label{A1}  
    \item \( g(t, s, y, z, \overline{y}, \overline{z}) \) is Lipschitz continuous in \( (y, \overline{y}, z, \overline{z}) \in \mathbb{R}^{2k} \times \mathbb{R}^{2(k \times n)} \), uniformly in \( (t, s, \theta, \omega) \in \Delta[0,T]\times[0,T]\times \Omega \). Moreover, its first-order derivatives marked with \( \surd_L \) in Table \ref{tab:driver1} are uniformly Lipschitz in \( (y, \overline{y}, z, \overline{z}) \), while those marked with \( \surd_{BL} \) are uniformly bounded across all arguments and uniformly Lipschitz in \( (y, \overline{y}, z, \overline{z}) \). The requirements of same interpretations apply to second-order derivatives marked with \( \surd_{L} \) and \( \surd_{BL} \) in Table \ref{tab:driver2}. Let \( L > 0 \) denote the maximum of all these Lipschitz constants and bounds. \label{A2}
    \item $g(t,s,0,0,0,0)\in\Theta^{\textsc{BMO}}_{\mathbb{F}}(\Delta[0,T]\times\Omega;\mathbb{R}^{k})$. \label{A3} 
\end{enumerate}

\begin{table}[!ht] 
	\centering
    \begin{tabular}{c| c c c c c c c c c}
       \hline
       $\alpha$ & $t$ & $D_\theta$ & $s$ & $y$ & $z$ & $\overline{y}$ & $\overline{z}$  \\ % <-- added & and content for each column
      \hline 
      $g_\alpha$ & $\surd_L$ & $\surd_L$ & & $\surd_{BL}$ & $\surd_{BL}$ & $\surd_{BL}$ & $\surd_{BL}$ \\ % <--
      \hline 
    \end{tabular}
	\caption{First-order derivatives of $g$ required to be bounded and Lipschitz continuous}
	\label{tab:driver1}
\end{table} 

\begin{table}[!ht] 
  \centering
  \begin{tabular}{c| c c c c c c c c c}
  	\hline
  	\diagbox{$\alpha$}{$g_{\alpha\beta}$}{$\beta$} & $t$ & $D_\theta$ & $s$ & $y$ & $z$ & $\overline{y}$ & $\overline{z}$ \\ % <-- added & and content for each column
  	\hline 
  	$t$ &  & $\surd_{L}$ &  & $\surd_{BL}$ & 
        $\surd_{BL}$ & $\surd_{BL}$ & $\surd_{BL}$ \\
        $D_\theta$ & $\surd_{L}$ &  &  & $\surd_{BL}$ & $\surd_{BL}$ &  &  \\% <--
  	$s$ &  &  &  &  & \\
  	$y$ & $\surd_{BL}$ & $\surd_{BL}$ &  & $\surd_{BL}$ & $\surd_{BL}$ & $\surd_{BL}$ & $\surd_{BL}$ \\
  	$z$ & $\surd_{BL}$ & $\surd_{BL}$ &  & $\surd_{BL}$ & $\surd_{BL}$ & $\surd_{BL}$ & $\surd_{BL}$ \\
  	$\overline{y}$ & $\surd_{BL}$ &  &  & $\surd_{BL}$ & $\surd_{BL}$  &  & \\ 
   $\overline{z}$ & $\surd_{BL}$ &  &  & $\surd_{BL}$ & $\surd_{BL}$ & &  \\
  	\hline 
  \end{tabular}
  \caption{Second-order derivatives of $g$ required to be bounded and Lipschitz continuous}
  \label{tab:driver2}
\end{table} 

The assumptions (A1)-(A3) were amended for \eqref{GeneralCase} from the standard ones in BSDE studies; see \cite{Pardoux1990, Yong1999, zhang2017backward, pardoux2014stochastic}. Specifically, (A1) imposes regularity on the terminal datum \( \xi \), which reflects the regularity of the solution \( Y \), since \(\xi(t) = Y(t, T)\). (A2) requires Lipschitz continuity for the generator \( g \) in its arguments with continuous differentiability being a stricter form. Due to the dependence on diagonal processes in BSVIEs \eqref{GeneralCase}, the generator \( g \) needs to be beyond standard Lipschitz continuity, as noted in BSDE literature; see \cite{Pardoux1990}. In particular, we need its \( t \)-directional and Malliavin differentiability, along with existence of mixed derivatives. Hence, \( g \) needs to possess continuous first- and second-order derivatives as detailed in Tables \ref{tab:driver1} and \ref{tab:driver2}.  Finally, since we are working in a non-Markovian setting, the generator \( g \) may explicitly depend on the trajectory \(\omega \in \Omega\). Therefore, it is natural to impose an integrability condition (A3) on \( g \) at \( (t,s,0,0,0,0) \), which, together with (A2), effectively controls the growth of \( g(t,s,y,z,\overline{y},\overline{z}) \). Similar requirements, outlined in Tables \ref{tab:driver1}-\ref{tab:driver2}, can also be found in PDE studies, as noted in \cite{lei2023nonlocal, LeiSDG, lei2023well}. Given the close connection between BSVIEs and PDEs, highlighted by the Feynman--Kac formula, we expect a parallel PDE theory to our study and it will be explored in Section \ref{sec:MarkovianBSVIEs}.

These conditions are easily satisfied by functions with continuous, bounded third-order derivatives. It is noteworthy that in contrast to directly applying Lipschitz assumption on \(\nabla g = g_t + g_Y \cdot Y_t(t,s) + g_Z \cdot Z_t(t,s)\) in \eqref{SysYt} with respect to \( (Y,Z)(t,s) \), \( (Y,Z)(s,s) \), and \( (Y,Z)_t(t,s) \) as in \cite{hernandez2023me, Hernandez2021a}, the condition (A2) is more reasonable and verifiable because it imposes Lipschitz conditions on \( g_t \), \( g_Y \), and \( g_Z \) without requiring any prior information about the undetermined solution \( (Y,Z)_t(t,s) \). To clarify our approach, we have chosen to strengthen our assumptions to simplify the proofs. Subsequently, we will show that (A2) can be significantly relaxed to accommodate non-uniform Lipschitz continuity (A2') and (A2''), or even local Lipschitz conditions; see Remark \ref{RemkLocallyL}.

Let us assume that there exists an adapted solution $(Y,Z)\in\mathcal{M}[0,T]$ satisfying \eqref{GeneralCase}. Under assumptions (A1)-(A3) about $g$ and $\xi$, it is clear that the generator $g$ evaluated at $(t,s,Y(t,s),Z(t,s),Y(s,s),Z(s,s))$ is $t$-directional differentiable. Consequently, Proposition 2.4 of \cite{Karoui1997} shows that the function $t\mapsto(Y,Z)(t,s)$ is differentiable with derivatives given
by $(Y,Z)_t(t,s)$. By noting the integral representation of diagonal process pair,
\begin{equation} \label{IntegralRepre}
    (Y,Z)(s,s)=(Y,Z)(t,s)+\int^s_t(Y,Z)_t(\alpha,s)d\alpha, 
\end{equation}
one has a coupled BSDE system for $(Y,Z)(t,s)$ and $(Y,Z)_t(t,s)$, 
\begin{equation} \label{SysYYt}
    \begin{cases}
        \begin{aligned}
            dY(t,s) & ~=~ \overline{g}\Big(t,s,Y(t,s),Z(t,s),\int^s_tY_t(\alpha,s)d\alpha,\int^s_tZ_t(\alpha,s)d\alpha\Big)ds +Z(t,s)dB(s), \\

        dY_t(t,s) & ~=~ \Big[\overline{g}_t+\overline{g}_Y\cdot Y_t(t,s) +\overline{g}_Z\cdot Z_t(t,s)\Big]ds+Z_t(t,s)dB(s), \\

       Y(t,T) & ~=~ \xi(t), \quad Y_t(t,T) ~=~ \xi_t(t), \quad 0\leq t \leq s\leq T,
        \end{aligned} 
    \end{cases}
\end{equation}
where \( g_t \), \( g_Y \), and \( g_Z \) are the partial derivatives of \( g(t,s,Y(t,s),Y(s,s),Z(t,s),Z(s,s)) \) from \eqref{GeneralCase} with respect to \( t \), \( Y(t,s) \), and \( Z(t,s) \), respectively. Furthermore, the overlines on \( g_t \), \( g_Y \), and \( g_Z \) indicate that after taking the partial derivatives, we replace the arguments \( (Y,Z)(s,s) \) with \( (Y,Z)(t,s) \) and \( \int_t^s (Y,Z)_t(\alpha,s) \, d\alpha \) with the integral expression \eqref{IntegralRepre}, respectivelysi, i.e., similar to the way \( \overline{g} \) and \( g \) are used in the first equation of \eqref{SysYYt}. For illustration, 
\begin{equation*}
    \begin{split}
        & \overline{g}\Big(t,s,Y(t,s),Z(t,s),\int^s_tY_t(\alpha,s)d\alpha,\int^s_tZ_t(\alpha,s)d\alpha\Big) \\
        = & g\Big(t,s,Y(t,s),Z(t,s),Y(t,s)+\int^s_tY_t(\alpha,s)d\alpha,Z(t,s)+\int^s_tZ_t(\alpha,s)d\alpha\Big),
    \end{split}
\end{equation*}
which also demonstrates how \( (Y,Z)(t,s) \) and \( (Y,Z)_t(t,s) \) of \eqref{SysYYt} are coupled together. The Lipschitz continuity and bounded derivatives of \( g \) specified in Table \ref{tab:driver1}-\ref{tab:driver2} inherit the same mathematical properties for \( \overline{g} \), and vice versa. If the coefficient processes \( \overline{g}_Y \) and \( \overline{g}_Z \) of \eqref{SysYYt} are given, then \eqref{SysYYt} becomes a nonlocal BSDE, the nonlocality of which arises from integration over additional parameters, as discussed in \cite{lei2024MV}. Under suitable Lipschitz assumptions, \cite{lei2024MV} has established the well-posedness for such nonlocal BSDEs. However, when \( g_Y \) and \( g_Z \) depend on the undetermined solutions, we have to simultaneously solve for \( (Y, Z, Y_t, Z_t)(t,s) \) from this non-Lipschitz BSVIE system and this feature distinguishes our paper from earlier works in \cite{lei2024MV} and \cite{Hamaguchi2021,hernandez2023me, Hernandez2021a}.

% Moreover, \(\overline{g}_t\), \(\overline{g}_Y\), and \(\overline{g}_Z\) represent the partial derivatives of the generator \(g(t, s, y, \overline{y}, z, \overline{z})\) with respect to \(t\), \(y\), and \(z\), respectively, and are evaluated at the point $\big(t, s, Y(t, s), Z(t, s), \int_t^s Y_t(\tau, s) d\tau, \int_t^s Z_t(\tau, s) d\tau\big)$. 

Assuming that \( g \) and \( \xi \) are Malliavin differentiable on the Wiener space and that \( (Y, Z) \in \mathcal{M}[0, T] \), Proposition 5.3 of \cite{Karoui1997} establishes that the solutions \( (Y, Z)(t, s) \) and \( (Y, Z)_t(t, s) \) of \eqref{SysYYt} are also Malliavin differentiable with derivatives given by \( D_\theta(Y, Z)(t, s) \) and \( D_\theta(Y, Z)_t(t, s) \), i.e., \( (Y, Z), (Y, Z)_t \in L^2(0, T; (\mathbb{D}^{1,2})^k \times (\mathbb{D}^{1,2})^{n \times k}) \). A simple application of the chain rule and product rule to \eqref{GeneralCase} (or \eqref{SysYYt}) as in \cite{Pardoux1992, Karoui1997, hu2011malliavin, yong2008well, Wang2020} leads to a straightforward result: a coupled BSDE system \eqref{MalliavinYYt} of \(D_\theta(Y,Z)(t,s)\) and \(D_\theta(Y,Z)_t(t,s)\):
\begin{equation} \label{MalliavinYYt}
    \begin{cases}
        % dY(t,s)=-\overline{g}\Big(t,s,Y(t,s),Z(t,s),\int^s_t\mathcal{Y}(\tau,s)d\tau,\int^s_t\mathcal{Z}(\tau,s)d\tau\Big)ds+Z(t,s)dB(s), \\

        % ~ \\

        % d\mathcal{Y}(t,s)=-\Big[\overline{g}_t+\overline{g}_Y\cdot\mathcal{Y}(t,s) +\overline{g}_Z\cdot\mathcal{Z}(t,s)\Big]ds+\mathcal{Z}(t,s)dB(s), \\

        % ~ \\
        \begin{aligned}
            dD_\theta Y(t,s) & ~=~ \Big[D_\theta \overline{g}+ \widetilde{g}_Y\cdot D_\theta Y(t,s) + \widetilde{g}_Z\cdot D_\theta Z(t,s) +\overline{g}_{\overline{Y}}\cdot \int^s_tD_\theta Y_t(\alpha,s)d\alpha \\ 

            & \qquad + \overline{g}_{\overline{Z}}\cdot\int^s_tD_\theta Z_t(\alpha,s)d\alpha\Big]ds+D_\theta Z(t,s)dB(s), \\

            dD_\theta Y_t(t,s) & ~=~ \bigg\{D_\theta \overline{g}_t + \widetilde{g}_{tY}\cdot D_\theta Y(t,s) + \widetilde{g}_{tZ}\cdot D_\theta Z(t,s)  + \overline{g}_{t\overline{Y}}\cdot \int^s_tD_\theta Y_t(\alpha,s)d\alpha  \\

            & + \overline{g}_{t\overline{Z}}\cdot \int^s_tD_\theta Z_t(\alpha,s)d\alpha +\Big[D_\theta \overline{g}_Y + \widetilde{g}_{YY}\cdot D_\theta Y(t,s) + \widetilde{g}_{YZ}\cdot D_\theta Z(t,s) \\
        
            & + \overline{g}_{Y\overline{Y}}\cdot \int^s_tD_\theta Y_t(\alpha,s)d\alpha  + \overline{g}_{Y\overline{Z}}\cdot \int^s_tD_\theta Z_t(\alpha,s)d\alpha\Big]\cdot Y_t(t,s) +\overline{g}_Y \cdot D_\theta Y_t(t,s) \\

            & +\Big[D_\theta \overline{g}_Z + \widetilde{g}_{ZY}\cdot D_\theta Y(t,s) + \widetilde{g}_{ZZ}\cdot D_\theta Z(t,s)  + \overline{g}_{Z\overline{Y}}\cdot \int^s_tD_\theta Y_t(\alpha,s)d\alpha \\
        
            & + \overline{g}_{Z\overline{Z}}\cdot \int^s_tD_\theta Z_t(\alpha,s)d\alpha\Big]\cdot Z_t(t,s) +\overline{g}_Z \cdot D_\theta Z_t(t,s)\bigg\}ds +D_\theta Z_t(t,s)dB(s), \\

            D_\theta Y(t,T) & ~=~ D_\theta\xi(t), \quad D_\theta Y_t(t,T) ~=~ D_\theta\xi_t(t), \quad 0\leq t \leq s\leq T, \quad 0\leq \theta\leq s\leq T, 
        \end{aligned}
    \end{cases}
\end{equation}
and $D_\theta Y(t,s) = 0$ and $D_\theta Z(t,s) = 0$ for $0\leq s< \theta \leq T$, where \( \widetilde{g}_{Y} = \overline{g}_{Y} + \overline{g}_{\overline{Y}} \), \( \widetilde{g}_{Z} = \overline{g}_{Z} + \overline{g}_{\overline{Z}} \), and \( D_\theta \overline{g} \), \( D_\theta \overline{g}_t \), \( D_\theta \overline{g}_Y \), and \( D_\theta \overline{g}_Z \) denote the Malliavin derivatives of $g$, $g_t$, $g_Y$, and $g_Z$ %(t,s,Y(t,s),Y(s,s),Z(t,s),Z(s,s)) 
with \( (Y,Z)(s,s) \) replaced by \( (Y,Z)(t,s) + \int_t^s (Y,Z)_t(\alpha,s) \, d\alpha \) after differentiation. Moreover, \( \overline{g}_{tY} \), \( \overline{g}_{t\overline{Y}} \), \( \overline{g}_{tZ} \), and \( \overline{g}_{t\overline{Z}} \) are partial derivatives of \( g_t(t,s,Y(t,s),Y(s,s),Z(t,s),Z(s,s)) \) with respect to \( Y(t,s) \), \( Y(s,s) \), \( Z(t,s) \), and \( Z(s,s) \), respectively, where the diagonal processes are substituted using the integral expression \eqref{IntegralRepre}. Similarly, we define the partial derivatives of $g_Y$ and $g_Z$ by \( \overline{g}_{YY} \), \( \overline{g}_{Y\overline{Y}} \), \( \overline{g}_{YZ} \), \( \overline{g}_{Y\overline{Z}} \), \( \overline{g}_{ZY} \), \( \overline{g}_{Z\overline{Y}} \), \( \overline{g}_{ZZ} \), and \( \overline{g}_{Z\overline{Z}} \), which give rise to defining \( \widetilde{g}_{tY} = \overline{g}_{tY} + \overline{g}_{t\overline{Y}} \), \( \widetilde{g}_{tZ} = \overline{g}_{tZ} + \overline{g}_{t\overline{Z}} \), \( \widetilde{g}_{YY} = \overline{g}_{YY} + \overline{g}_{Y\overline{Y}} \), \( \widetilde{g}_{YZ} = \overline{g}_{YZ} + \overline{g}_{Y\overline{Z}} \), \( \widetilde{g}_{ZY}=\overline{g}_{ZY}+\overline{g}_{Z\overline{Y}} \), and \( \widetilde{g}_{ZZ}=\overline{g}_{ZZ}+\overline{g}_{Z\overline{Z}} \).

If \((Y,Z)(t,s)\) and \((Y,Z)_t(t,s)\) are presumed to be known (from solving \eqref{SysYYt}), then all coefficients in \eqref{MalliavinYYt} are fully determined and in this case, \eqref{MalliavinYYt} degenerates to a coupled linear BSDE system for \( D_\theta(Y,Z)(t,s) \) and \( D_\theta(Y,Z)_t(t,s) \). Note that the presence of \( (Y_t, Z_t) \) in the coefficients of \eqref{MalliavinYYt} suggests that if \eqref{SysYYt} were studied solely in a normed space of \( L^p \)-integrable processes, the difficulty would be raised to another level due to the need to analyze stochastic Lipschitz BSVIEs \eqref{MalliavinYYt}, as discussed in \cite{lei2024MV}. This motivates our focus on proving the well-posedness of \eqref{GeneralCase} in \( \mathcal{M}[0,T] \), where the boundedness of the \( (Y,Z)_t \) components eliminates the need to handle stochastic Lipschitz coefficients, thereby simplifying our analysis. From our earlier analysis, we observed that establishing the well-posedness of solutions to the general BSVIE \eqref{GeneralCase} in the highly regular Banach space \( \mathcal{M}[0,T] \) requires not only considering the original BSVIE \eqref{GeneralCase} but also examining the dynamics of \eqref{SysYYt} and \eqref{MalliavinYYt}, which characterize the various derivatives of the solution. As a result, the study of BSVIEs \eqref{GeneralCase} necessitates the investigation of the coupled system \eqref{SysYYt}-\eqref{MalliavinYYt}, where the generator is not standard Lipschitz. This pain point particularly makes the well-posedness analysis of BSVIEs \eqref{GeneralCase} challenging.

%--------------------------------------------------------------------------------

\subsection{Existence and uniqueness of BSVIE solutions} \label{subsec:Wellposedness}
In this subsection, we present and prove the main theoretical results of this paper: the well-posedness of solutions to BSVIEs \eqref{GeneralCase} over an arbitrarily large time horizon. Moreover, we further improve the results in terms of the regularity of the solutions with additional continuity assumptions on the Malliavin derivatives of $(\xi,g)$. While this subsection focuses on a Malliavin calculus approach in handling the diagonal processes within the well-posedness analysis, the next subsection presents appropriate extensions to a more general framework.

\begin{theorem} \label{GeneralWellp}
    Under assumptions $\mathrm{(A1)}-\mathrm{(A3)}$, the BSVIE \eqref{GeneralCase} admits a unique adapted solution $(Y,Z)(t,s)\in\mathcal{M}[0,T]$. 
\end{theorem}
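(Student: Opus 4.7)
The strategy is to prove local existence and uniqueness on a short interval $[T-\delta,T]$ by applying Banach's fixed-point theorem in the space $\mathcal{M}[T-\delta,T]$, and then to extend the result backward to $[0,T]$ by iterating the argument finitely many times. The iteration is legitimate because the Lipschitz constants and bounds granted by (A2) are universal, so a single $\delta>0$ works at every step. Working inside $\mathcal{M}$, as opposed to an $L^{p}$-type space, is the crucial move: once the candidates $Y_{t},Z_{t}$ are known to be uniformly bounded, all the dangerous products $\overline g_{Y}\!\cdot\!Y_{t}$, $\overline g_{Z}\!\cdot\!Z_{t}$ in \eqref{SysYYt} and the second-order products in \eqref{MalliavinYYt} become genuine Lipschitz perturbations, which is exactly what a contraction argument requires.

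\textbf{The map.} I would fix a closed ball $B_{R}\subset\mathcal{M}[T-\delta,T]$ whose radius is chosen in terms of the data $(\xi,g(\cdot,\cdot,0,0,0,0))$ through (A1) and (A3), and define $\Phi:B_{R}\to B_{R}$ by plugging an input $(y,z,y_{t},z_{t})$ together with its Malliavin derivatives into the coefficients of \eqref{GeneralCase}, \eqref{SysYYt} and \eqref{MalliavinYYt}---the diagonal terms being systematically expressed through the representation \eqref{IntegralRepre}---and then solving the three resulting problems in sequence. Each problem, once the nonlinearities are frozen, is a classical Lipschitz BSDE parameterized by $t$ with bounded coefficients, which admits a unique adapted solution by the Karoui--Peng--Quenez theory. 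The technical pivot is the Malliavin trace representation $Z(t,s)=D_{s}Y(t,s)$ and $Z_{t}(t,s)=D_{s}Y_{t}(t,s)$: since $D_{\theta}Y$ and $D_{\theta}Y_{t}$ solve a linear BSDE with bounded generator and bounded terminal data in view of (A1)--(A2), they are uniformly bounded, and this transfers directly to uniform $L^{\infty}$ bounds on $Z$ and $Z_{t}$, which is precisely what the stricter subspace $\Theta^{\textsc{BMO}}_{\mathbb{F},b}$ demands.

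\textbf{Contraction and a priori control.} For the contraction estimate I would take two inputs, subtract the corresponding images, and apply It\^o's formula to $|\Delta Y|^{2}$, $|\Delta Y_{t}|^{2}$, $|\Delta D_{\theta}Y|^{2}$ and $|\Delta D_{\theta}Y_{t}|^{2}$ in succession. The uniform Lipschitz bounds recorded in Tables~\ref{tab:driver1}--\ref{tab:driver2} together with Young's inequality and energy-type manipulations handle the linear terms; the extra factor of $\delta$ (or $\delta^{1/2}$) coming from the Lebesgue integral $\int_{t}^{s}(\cdot)\,d\alpha$ in \eqref{IntegralRepre} and from the drift integrations in the BSDEs supplies the smallness needed to close the contraction. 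The BMO hypothesis (A3) is used at this stage through the energy inequality for BMO martingales, which yields uniform $L^{p}$ bounds for every $p\geq 2$ and prevents the integrals $\int_{s}^{T}|D_{\theta}Z|^{2}d\tau$ appearing in the definition of $\Theta^{\textsc{BMO}}_{\mathbb{F}}$ from blowing up.

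\textbf{Main obstacle and extension.} The hard part, in my view, will be verifying that $\Phi$ actually sends $B_{R}$ into itself while preserving all the regularity encoded in $\mathcal{M}[T-\delta,T]$: $s$-continuity of the $Y$-components, $L^{\infty}$ (rather than merely BMO) bounds on $Z$ and $Z_{t}$, and the consistency of the commutation between $\partial_{t}$ and $D_{\theta}$ implicit in passing from \eqref{SysYYt} to \eqref{MalliavinYYt}. The first follows from the classical $s$-continuity theory for BSDE solutions with sufficiently regular data; the second from the Malliavin trace representation combined with the uniform bound on $D_{\theta}Y$; and the third from the bounded-Lipschitz hypotheses on the second-order mixed derivatives of $g$ in Table~\ref{tab:driver2}, which legitimate differentiating \eqref{SysYYt} in the Wiener direction. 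Once local well-posedness is in hand on $[T-\delta,T]$, gluing successive intervals $[T-(k+1)\delta,T-k\delta]$ yields the global result on $[0,T]$: at each gluing point the fresh terminal data $(Y(\cdot,T-k\delta),Y_{t}(\cdot,T-k\delta),D_{\theta}Y(\cdot,T-k\delta),D_{\theta}Y_{t}(\cdot,T-k\delta))$ lie in the admissible bounded classes by the output of the previous step, and uniqueness on $[0,T]$ follows at once from the contraction property on each slice.
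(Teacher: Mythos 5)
Your overall architecture (freeze the nonlinearities through the representation \eqref{IntegralRepre}, solve the resulting Lipschitz BSDE system for $(Y,Y_t,D_\theta Y,D_\theta Y_t)$, use the trace identity $Z=D_sY$, $Z_t=D_sY_t$ to land in $\Theta^{\textsc{BMO}}_{\mathbb{F},b}$, and close a contraction on a small terminal interval) matches the paper's Step 1. But there is a genuine gap in your extension argument, and it sits exactly at the point the paper identifies as the central difficulty. You assert that ``the Lipschitz constants and bounds granted by (A2) are universal, so a single $\delta>0$ works at every step.'' This is not so: the induced system \eqref{SysYYt}--\eqref{MalliavinYYt} contains genuinely quadratic terms such as $\widetilde g_{ZZ}\cdot D_\theta \overline Z\cdot \overline Z_t$, so the effective Lipschitz constant of the frozen map on the ball $B_{\delta,T}(R)$ is of the form $C(R)$, with $C$ increasing in $R$; the contraction condition is $C(R)\delta\le \tfrac14$, not $L\delta\le\tfrac14$. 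Your own opening paragraph concedes this when you say the products become Lipschitz ``once the candidates $Y_t,Z_t$ are known to be uniformly bounded'' --- bounded by $R$, which is an input of the construction, not a datum. Consequently, when you glue successive slices the radius needed at each new terminal time could a priori grow without bound, forcing $\delta\downarrow 0$ before reaching $t=0$. The paper's Step 3 resolves this with a dedicated a-priori estimate: applying the energy estimates of \cite{song2017multi} first to \eqref{SysYYt} to get $\|(Y,Z,Y_t,Z_t)\|^2_{\mathcal N[0,T]}\le N$ with $N$ depending only on $(n,k,L,T,\xi,g_0)$, and then to \eqref{MalliavinYYt} (whose coefficients are now controlled by $N$) to get $\|D_\theta(Y,Z,Y_t,Z_t)\|^2_{\mathcal N[0,T]}\le M$, which together bound $\|(Y,Z)\|_{\mathcal M[0,T]}$ by a constant determined by the data alone and hence permit a uniformly positive step size. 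Without some version of this estimate your iteration does not terminate.

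A secondary omission: Banach's theorem gives you uniqueness only inside the ball $B_{\delta,T}(R)$, whereas the theorem claims uniqueness in all of $\mathcal M[0,T]$. The paper's Step 2 handles this separately (by taking $R'$ larger than the norms of both candidate solutions and running a contradiction argument on the supremum of the agreement set); your phrase ``uniqueness follows at once from the contraction property on each slice'' skips this step. Your choice of It\^o-formula energy estimates in place of the quoted propositions from \cite{song2017multi} is a legitimate technical variant and not a defect.
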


\begin{proof}
Our proof is outlined and divided into the following three main steps:
\begin{enumerate}[start=1,label={\upshape \bfseries Step \arabic*.}]
    \item \textbf{Local existence.} We consider a closed ball \( B_{\delta,T}(R) \) centered at $(0,0)$ with radius \( R \) in \( \mathcal{M}[T-\delta, T] \). By choosing a small enough \( \delta \) and sufficiently large \( R \), we then construct a self-mapping contraction within this ball so as to ensure ensures the existence of a unique fixed point within this ball by Banach's fixed-point theorem.% , which solves the BSVIE in \( [T-\delta, T] \).
    \item \textbf{Local uniqueness.} By means of contradiction arguments, we show that the uniqueness of the solution extends to the entire space \( \mathcal{M}[T-\delta, T] \) rather than just \( B_{\delta,T}(R) \).
    \item \textbf{Global well-posedness.} Finally, by updating the terminal time and datum, the local solution can be extended to a larger time interval. After obtaining an a-priori estimate for the solution, we can argue that this procedure can be repeated indefinitely, up to constructing a unique global solution to the BSVIE over an arbitrarily large time horizon.
\end{enumerate}

% Next, we provide the details of the steps in our proof. \\

\noindent\textbf{Step 1.} First of all, we identify a solution of BSVIE \eqref{GeneralCase} in $\Delta[T-\delta,T]$ as a fixed point of the operator $\Gamma$ defined in a non-empty closed subspace $B_{\delta,T}(R)$ of $(Y,Z)(t,s)\in\mathcal{M}[T-\delta, T]$ 
    \begin{equation*}
        B_{\delta,T}(R):=\Big\{(Y,Z)(t,s)\in\mathcal{M}[T-\delta, T]:~\|(Y,Z)\|^2_{\mathcal{M}[T-\delta,T]}\leq R\Big\},  
    \end{equation*}
by $\Gamma((\overline{Y},\overline{Z}))=(Y,Z)$, where $\Gamma((Y,Z))$ is the solution to 
\begin{equation*}
    \begin{cases}
        \begin{aligned}
            dY(t,s) & ~=~ g\big(t,s,\overline{Y}(t,s),\overline{Z}(t,s),\overline{Y}(s,s),\overline{Z}(s,s)\big)ds+Z(t,s)dB(s), \\
        
       Y(t,T) & ~=~ \xi(t), \quad T-\delta\leq t \leq s\leq T. 
        \end{aligned} 
    \end{cases}
\end{equation*} 
It is important to note that the diagonal process pair \(\{(\overline{Y},\overline{Z})(s,s)\}_{T-\delta \leq s \leq T}\) is well-defined for every \((\overline{Y},\overline{Z})(t,s) \in \mathcal{M}[T-\delta, T]\) given by \( (\overline{Y},\overline{Z})(s,s) = (\overline{Y},\overline{Z})(t,s) + \int_t^s (\overline{Y},\overline{Z})_t(\alpha,s) \, d\alpha \). Moreover, under the regularity assumptions for \( (\overline{Y}, \overline{Z})(t,s) \) within this space as well as for \( g \) and \( \xi \) as outlined in (A1)-(A3) and as discussed in the previous subsection and in \cite{Karoui1997}, the solution $(Y,Z)(t,s)$, being the image of the mapping \( \Gamma \), inherits high regularity properties, including the \( t \)-directional differentiability and Malliavin differentiability. Consequently, for $T-\delta\leq t\leq s\leq T$ and $0\leq \theta\leq s\leq T$, it follows that
\begin{equation} \label{Mapping}
    \begin{cases}
        \begin{aligned}
            dY(t,s) & ~=~ \overline{g}\Big(t,s,\overline{Y}(t,s),\overline{Z}(t,s),\int^s_t\overline{Y}_t(\alpha,s)d\alpha,\int^s_t\overline{Z}_t(\alpha,s)d\alpha\Big)ds+Z(t,s)dB(s), \\

            dY_t(t,s) & ~=~ \Big[\overline{g}_t+\overline{g}_Y\cdot\overline{Y}_t(t,s) +\overline{g}_Z\cdot\overline{Z}_t(t,s)\Big]ds+Z_t(t,s)dB(s), \\

            dD_\theta Y(t,s) & ~=~ \Big[D_\theta \overline{g}+ \widetilde{g}_Y\cdot D_\theta \overline{Y}(t,s) + \widetilde{g}_Z\cdot D_\theta \overline{Z}(t,s) +\overline{g}_{\overline{Y}}\cdot \int^s_tD_\theta\overline{Y}_t(\alpha,s)d\alpha \\

            & \qquad + \overline{g}_{\overline{Z}}\cdot\int^s_tD_\theta\overline{Z}_t(\alpha,s)d\alpha\Big]ds+D_\theta Z(t,s)dB(s), \\

            dD_\theta Y_t(t,s) & ~=~\bigg\{D_\theta \overline{g}_t + \widetilde{g}_{tY}\cdot D_\theta \overline{Y}(t,s) + \widetilde{g}_{tZ}\cdot D_\theta \overline{Z}(t,s)  + \overline{g}_{t\overline{Y}}\cdot \int^s_tD_\theta\overline{Y}_t(\alpha,s)d\alpha  \\

            & \qquad + \overline{g}_{t\overline{Z}}\cdot \int^s_tD_\theta\overline{Z}_t(\alpha,s)d\alpha +\Big[D_\theta \overline{g}_Y + \widetilde{g}_{YY}\cdot D_\theta \overline{Y}(t,s) + \widetilde{g}_{YZ}\cdot D_\theta \overline{Z}(t,s) \\

            & \qquad + \overline{g}_{Y\overline{Y}}\cdot \int^s_tD_\theta\overline{Y}_t(\alpha,s)d\alpha  + \overline{g}_{Y\overline{Z}}\cdot \int^s_tD_\theta\overline{Z}_t(\alpha,s)d\alpha\Big]\cdot\overline{Y}_t(t,s) +\overline{g}_Y \cdot D_\theta\overline{Y}_t(t,s) \\
            % \qquad\qquad\qquad\quad
            & \qquad +\Big[D_\theta \overline{g}_Z + \widetilde{g}_{ZY}\cdot D_\theta \overline{Y}(t,s) + \widetilde{g}_{ZZ}\cdot D_\theta \overline{Z}(t,s)  + \overline{g}_{Z\overline{Y}}\cdot \int^s_tD_\theta\overline{Y}_t(\alpha,s)d\alpha  \\

            & \qquad + \overline{g}_{Z\overline{Z}}\cdot \int^s_tD_\theta\overline{Z}_t(\alpha,s)d\alpha\Big]\cdot \overline{Z}_t(t,s) +\overline{g}_Z \cdot D_\theta\overline{Z}_t(t,s)\bigg\}ds +D_\theta Z_t(t,s)dB(s), \\

            Y(t,T) & ~=~ \xi(t), \quad Y_t(t,T) ~=~ \xi_t(t),\quad D_\theta Y(t,T) ~=~ D_\theta\xi(t), \quad D_\theta Y_t(t,T) ~=~ D_\theta\xi_t(t),  
        \end{aligned}
    \end{cases}
\end{equation}
and $D_\theta Y(t,s)=D_\theta Z(t,s)=D_\theta Y_t(t,s)=D_\theta Z_t(t,s)=0$ for $s<\theta\leq T$.

Next, we will show that for every $(\overline{Y}^1,\overline{Z}^1)$ and $(\overline{Y}^2,\overline{Z}^2)\in B_{\delta,T}(R)$, by noting $\Delta(\overline{Y},\overline{Z})=(\overline{Y}^1-\overline{Y}^2,\overline{Z}^1-\overline{Z}^2)$, the following inequality holds 
\begin{equation} \label{Contraction}
    \|\Delta(Y,Z)\|^2_{\mathcal{M}[T-\delta,T]}\leq C(R)\delta\|\Delta(\overline{Y},\overline{Z})\|^2_{\mathcal{M}[T-\delta,T]},  
\end{equation}
where \( C:[0,\infty)\to[0,\infty) \) is a modulus of continuity that can be explicitly written, meaning that it is a continuous increasing function and satisfies \( C(0) = 0 \). To simplify the notation, \( C(R) \) may vary from line to line. By applying Proposition 5.5 in \cite{song2017multi} and noting that \( D_s Y(t,s) = Z(t,s) \) and \( D_s Y_t(t,s) = Z_t(t,s) \) as shown in Proposition 5.3 of \cite{Karoui1997}, we have
\begin{equation} \label{EstimateDeltaYZ}
    \begin{split}
        & \|\Delta(Y,Z)\|^2_{\mathcal{M}[T-\delta,T]}\leq 2\|\Delta(Y,Z)\|^2_{\mathcal{N}[T-\delta,T]} \\
        \leq & 2\left\{\sup\limits_{0\leq\theta\leq T}\sup\limits_{T-\delta\leq t\leq s\leq T}\mathbb{E}\left[\left.\int^T_s\left|\mathcal{G}_{\theta,\tau}(\overline{Y}^1,\overline{Z}^1)-\mathcal{G}_{\theta,\tau}(\overline{Y}^2,\overline{Z}^2)\right|d\tau\right|\mathcal{F}_s\right]\right\}^2,  
    \end{split}
\end{equation}
where $\mathcal{G}_{\theta,\tau}(\overline{Y}^1,\overline{Z}^1)$ is the generator of the BSDE system \eqref{Mapping} after substituting $(\overline{Y},\overline{Z})$ with $(\overline{Y}^1,\overline{Z}^1)$. Then, we briefly outline how to deal with the most challenging terms on the right-hand side of \eqref{EstimateDeltaYZ}:
\begin{equation*}
    \begin{split}
        & \left\{\sup\limits_{T-\delta\leq t\leq s\leq T}\mathbb{E}\left[\left.\int^T_s\left|\Delta \overline{Z}(t,\tau)\cdot D_\theta \overline{Z}^1(t,\tau)\cdot \overline{Z}^1_t(t,\tau)\right|d\tau\right|\mathcal{F}_s\right]\right\}^2 \\
        \leq & R \cdot\sup\limits_{T-\delta\leq t\leq s\leq T}\left\{\mathbb{E}\left[\left.\int^T_s\left|D_\theta \overline{Z}^1(t,\tau)\right|d\tau\right|\mathcal{F}_s\right]\right\}^2 \cdot \|\Delta(\overline{Y},\overline{Z})\|^2_{\mathcal{M}[T-\delta,T]} \\
        \leq & R \cdot\sup\limits_{T-\delta\leq t\leq s\leq T}\mathbb{E}\left[\left.\int^T_sd\tau\right|\mathcal{F}_s\right]\mathbb{E}\left[\left.\int^T_s\left|D_\theta \overline{Z}^1(t,\tau)\right|^2d\tau\right|\mathcal{F}_s\right] \cdot \|\Delta(\overline{Y},\overline{Z})\|^2_{\mathcal{M}[T-\delta,T]} \\
        \leq & R^2 \cdot \delta \cdot \|\Delta(\overline{Y},\overline{Z})\|^2_{\mathcal{M}[T-\delta,T]}
    \end{split}
\end{equation*}
and 
\begin{equation*}
    \begin{split}
        & \left\{\sup\limits_{T-\delta\leq t\leq s\leq T}\mathbb{E}\left[\left.\int^T_s\left|\int^\tau_t\Delta \overline{Z}_t(\alpha,\tau)d\alpha\cdot \int^\tau_tD_\theta\overline{Z}^1_t(\alpha,\tau)d\alpha\cdot \overline{Z}^1_t(t,\tau)\right|d\tau\right|\mathcal{F}_s\right]\right\}^2 \\
        \leq & R\cdot \left\{\sup\limits_{T-\delta\leq t\leq s\leq T}\mathbb{E}\left[\left.\int^T_s\left|\int^\tau_t\Delta \overline{Z}_t(\alpha,\tau)d\alpha\cdot \int^\tau_tD_\theta\overline{Z}^1_t(\alpha,\tau)d\alpha\right|d\tau\right|\mathcal{F}_s\right]\right\}^2 \\
        \leq & R\cdot \sup\limits_{T-\delta\leq t\leq s\leq T}\mathbb{E}\left[\left.\int^T_s\left|\int^\tau_t\Delta \overline{Z}_t(\alpha,\tau)d\alpha \right|^2d\tau\right|\mathcal{F}_s\right] \\
        & \qquad\qquad\qquad\qquad 
        \cdot \sup\limits_{T-\delta\leq t\leq s\leq T}\mathbb{E}\left[\left.\int^T_s\left|\int^\tau_tD_\theta\overline{Z}^1_t(\alpha,\tau)d\alpha\right|^2 d\tau\right|\mathcal{F}_s\right] \\
        \leq & R\cdot \sup\limits_{T-\delta\leq t\leq s\leq T}\mathbb{E}\left[\left.\int^T_s(\tau-t)\int^\tau_t\left|\Delta \overline{Z}_t(\alpha,\tau)\right|^2d\alpha d\tau\right|\mathcal{F}_s\right] \\
        & \qquad\qquad\qquad\qquad 
        \cdot \sup\limits_{T-\delta\leq t\leq s\leq T}\mathbb{E}\left[\left.\int^T_s(\tau-t)\int^\tau_t\left|D_\theta\overline{Z}^1_t(\alpha,\tau)\right|^2d\alpha d\tau\right|\mathcal{F}_s\right] \\
        \leq & R \cdot\delta^2\cdot \sup\limits_{T-\delta\leq t\leq s\leq T}\mathbb{E}\left[\left.\int^T_s\int^\tau_t\left|\Delta \overline{Z}_t(\alpha,\tau)\right|^2d\alpha d\tau\right|\mathcal{F}_s\right] \\
        & \qquad\qquad\qquad\qquad 
        \cdot \sup\limits_{T-\delta\leq t\leq s\leq T}\mathbb{E}\left[\left.\int^T_s\int^\tau_t\left|D_\theta\overline{Z}^1_t(\alpha,\tau)\right|^2d\alpha d\tau\right|\mathcal{F}_s\right] \\
        \leq & R\cdot\delta^2\cdot \sup\limits_{T-\delta\leq t\leq s\leq T}\mathbb{E}\left[\left.\int^T_s\int^\tau_{T-\delta}\left|\Delta \overline{Z}_t(\alpha,\tau)\right|^2d\alpha d\tau\right|\mathcal{F}_s\right] \\
        & \qquad\qquad\qquad\qquad 
        \cdot \sup\limits_{T-\delta\leq t\leq s\leq T}\mathbb{E}\left[\left.\int^T_s\int^\tau_{T-\delta}\left|D_\theta\overline{Z}^1_t(\alpha,\tau)\right|^2d\alpha d\tau\right|\mathcal{F}_s\right].
    \end{split}
\end{equation*}

Next, let us consider 
\begin{equation*}
    \begin{split}
        & \sup\limits_{T-\delta\leq t\leq s\leq T}\mathbb{E}\left[\left.\int^T_s\int^\tau_{T-\delta}\left|D_\theta \overline{Z}^1_t(\alpha,\tau)\right|^2d\alpha d\tau\right|\mathcal{F}_s\right] \\
        \leq & \sup\limits_{T-\delta\leq t\leq s\leq T}\mathbb{E}\left[\left.\int^T_s\int^T_\alpha\left|D_\theta \overline{Z}^1_t(\alpha,\tau)\right|^2 d\tau d\alpha\right|\mathcal{F}_s\right] \\
        % & \qquad\qquad\qquad\qquad 
        % +\sup\limits_{T-\delta\leq t\leq s\leq T}\mathbb{E}\left[\left.\int^s_{T-\delta}\int^T_s\left|D_\theta \overline{Z}^1_t(\alpha,\tau)\right|^2 d\tau d\alpha\right|\mathcal{F}_s\right] \\
        % & \leq \sup\limits_{T-\delta\leq t\leq s\leq T}\mathbb{E}\left[\left.\int^T_s\mathbb{E}\left[\left.\int^T_\alpha\left|D_\theta \overline{Z}^1_t(\alpha,\tau)\right|^2 d\tau\right|\mathcal{F}_\alpha\right] d\alpha\right|\mathcal{F}_s\right] \\
        % & \qquad\qquad\qquad 
        % +\sup\limits_{T-\delta\leq t\leq s\leq T}\int^s_{T-\delta}\mathbb{E}\left[\left.\int^T_s\left|D_\theta \overline{Z}^1_t(\alpha,\tau)\right|^2 d\tau \right|\mathcal{F}_s\right]d\alpha \\
        % & \leq R\cdot\sup\limits_{T-\delta\leq t\leq s\leq T}\mathbb{E}\left[\left.\int^T_s1 d\alpha\right|\mathcal{F}_s\right]+R\cdot\sup\limits_{T-\delta\leq t\leq s\leq T}\int^s_{T-\delta}1d\alpha \leq 2\delta R. 
        & \qquad\qquad\qquad\qquad 
        +\sup\limits_{T-\delta\leq t\leq s\leq T}\mathbb{E}\left[\left.\int^s_{T-\delta}\int^T_s\left|D_\theta \overline{Z}^1_t(\alpha,\tau)\right|^2 d\tau d\alpha\right|\mathcal{F}_s\right] \\
        \leq & \sup\limits_{T-\delta\leq t\leq s\leq T}\mathbb{E}\left[\left.\int^T_s\mathbb{E}\left[\left.\int^T_\alpha\left|D_\theta \overline{Z}^1_t(\alpha,\tau)\right|^2 d\tau\right|\mathcal{F}_\alpha\right] d\alpha\right|\mathcal{F}_s\right] \\
        & \qquad\qquad\qquad 
        +\sup\limits_{T-\delta\leq t\leq s\leq T}\int^s_{T-\delta}\mathbb{E}\left[\left.\int^T_s\left|D_\theta \overline{Z}^1_t(\alpha,\tau)\right|^2 d\tau \right|\mathcal{F}_s\right]d\alpha \\
        \leq & R\cdot\sup\limits_{T-\delta\leq t\leq s\leq T}\mathbb{E}\left[\left.\int^T_s1 d\alpha\right|\mathcal{F}_s\right]+R\cdot\sup\limits_{T-\delta\leq t\leq s\leq T}\int^s_{T-\delta}1d\alpha \leq 2\delta R. 
    \end{split}
\end{equation*}

Similarly, one has 
\begin{equation*}
    \sup\limits_{T-\delta\leq t\leq s\leq T}\mathbb{E}\left[\left.\int^T_s\int^\tau_{T-\delta}\left|\Delta\overline{Z}_t(\alpha,\tau)\right|^2d\alpha d\tau\right|\mathcal{F}_s\right]\leq 2\delta^2 \|\Delta(\overline{Y},\overline{Z})\|^2_{\mathcal{M}[T-\delta,T]}. 
\end{equation*}

Consequently, 
\begin{equation*}
    \begin{split}
        &\left\{\sup\limits_{T-\delta\leq t\leq s\leq T}\mathbb{E}\left[\left.\int^T_s\left|\int^\tau_t\Delta D_\tau\overline{Y}_t(\alpha,\tau)d\alpha\cdot \int^\tau_tD_\theta\overline{Z}^1_t(\alpha,\tau)d\alpha\cdot D_\tau\overline{Y}^1_t(t,\tau)\right|d\tau\right|\mathcal{F}_s\right]\right\}^2 \\
        \\[-13pt]
        \leq & 4R^2\delta^5\|\Delta(\overline{Y},\overline{Z})\|^2_{\mathcal{M}[T-\delta,T]}. 
    \end{split}
\end{equation*}

Through a rather lengthy but straightforward verification, one can establish the desired inequality \eqref{Contraction}. The established inequality \eqref{Contraction} implies three important facts: 
\begin{enumerate}
    \item By letting $(\overline{Y}^2,\overline{Z}^2)=(0,0)$, one can find that 
    \begin{equation*}
    \left\{\sup\limits_{T-\delta\leq t\leq s\leq T}\mathbb{E}\left[\left.\int^T_s\left|\mathcal{G}_{\theta,\tau}(\overline{Y}^1,\overline{Z}^1)-\mathcal{G}_{\theta,\tau}(0,0)\right|d\tau\right|\mathcal{F}_s\right]\right\}^2\leq C(R)\delta\|(\overline{Y}^1,\overline{Z}^1)\|^2_{\mathcal{M}[T-\delta,T]}, 
\end{equation*}
where $\mathcal{G}_{\theta,\tau}(0,0)$ is actually the value of $(g,g_t,D_\theta g,D_\theta g_t)$ evaluated at $(t,s,0,0,0,0)$. Consequently, one has 
\begin{equation*}
    \begin{split}
        & \left\{\sup\limits_{T-\delta\leq t\leq s\leq T}\mathbb{E}\left[\left.\int^T_s\left|\mathcal{G}_{\theta,\tau}(\overline{Y}^1,\overline{Z}^1)\right|d\tau\right|\mathcal{F}_s\right]\right\}^2 \\
        \leq & C(R)\delta\|(\overline{Y}^1,\overline{Z}^1)\|^2_{\mathcal{M}[T-\delta,T]}+C\left\{\sup\limits_{T-\delta\leq t\leq s\leq T}\mathbb{E}\left[\left.\int^T_s\left|\mathcal{G}_{\theta,\tau}(0,0)\right|d\tau\right|\mathcal{F}_s\right]\right\}^2 <\infty, 
    \end{split}
\end{equation*}
which directly indicates the mapping $\Gamma((\overline{Y}^1,\overline{Z}^1))=(Y^1,Z^1)$ is well-defined over the space $\mathcal{N}[T-\delta,T]$; see also the discussion in \cite{song2017multi}.
\item If $\delta$ and $R$ satisfy $C(R)\delta\leq \frac{1}{4}$, then $\Gamma$ is a $\frac{1}{4}$-contraction.
\item Assuming that \(\delta\) and \(R\) are balanced in such a way that \(\Gamma\) becomes a \(\frac{1}{4}\)-contraction, for every $(Y,Z)\in B_{\delta,T}(R)$, 
\begin{equation} \label{Selfmapping}
    \begin{split}
        \|\Gamma(Y,Z)\|^2_{\mathcal{M}[T-\delta,T]} & \leq 2\|\Gamma(Y,Z)-\Gamma(0,0)\|^2_{\mathcal{M}[T-\delta,T]}+2\|\Gamma(0,0)\|^2_{\mathcal{M}[T-\delta,T]} \\
        & \leq \frac{R}{2}+2\|\Gamma(0,0)\|^2_{\mathcal{M}[T-\delta,T]},
    \end{split}
\end{equation}
where $\Gamma(0,0)$ is the solution of 
\begin{equation*}
    \begin{cases}
        \begin{aligned}
            dY(t,s) & ~=~ g(t,s,0,0,0,0)ds+Z(t,s)dB(s), \\
        
            Y(t,T) & ~=~ \xi(t), \quad T-\delta\leq t \leq s\leq T,  
        \end{aligned}
    \end{cases}
\end{equation*} 
which is estimated by  
\begin{equation} \label{Mappingat0}
    \begin{cases}
        \begin{aligned}
            dY(t,s) & ~=~ \overline{g}(t,s,0,0,0,0)ds+Z(t,s)dB(s), \\

            dY_t(t,s) & ~=~ \overline{g}_t(t,s,0,0,0,0)ds+Z_t(t,s)dB(s), \\
            
            dD_\theta Y(t,s) & ~=~D_\theta \overline{g}(t,s,0,0,0,0)ds+D_\theta Z(t,s)dB(s), \\

            dD_\theta Y_t(t,s) & ~=~ D_\theta \overline{g}_t (t,s,0,0,0,0)ds +D_\theta Z_t(t,s)dB(s)
        \end{aligned}  
    \end{cases}
\end{equation}
with $(Y,Y_t,D_\theta Y,D_\theta Y_t)(t,T) = (\xi,\xi_t,D_\theta\xi,D_\theta \xi_t)(t)$ for $T-\delta\leq t\leq s\leq T$ and $0\leq \theta\leq s\leq T$. Moreover, $D_\theta Y(t,s)=D_\theta Z(t,s)=D_\theta Y_t(t,s)=D_\theta Z_t(t,s)=0$ for $s\leq \theta\leq T$. Consequently, the inequality \eqref{Selfmapping} shows that $\Gamma$ is a self-mapping defined in $B_{\delta,T}(R)$ only if $4\|\Gamma(0,0)\|^2_{[T-\delta,T]}\leq R$. 
\end{enumerate}

From the analyses above, we observe that for sufficiently large \( R \) and small enough \( \delta \), Banach's fixed-point theorem guarantees the existence of a unique fixed point for \( \Gamma \) within the closed ball \( B_{\delta,T}(R) \), which serves as a solution to the BSVIE \eqref{GeneralCase} in \( \Delta[T-\delta, T] \). 

\vspace{0.3cm}

\noindent\textbf{Step 2.} Now, we are to show that the solution found within the closed ball \(B_{\delta,T}(R)\) (in Step 1) is unique in the entire space $\mathcal{M}[T-\delta, T]$. It can be done with standard arguments below.

If \eqref{GeneralCase} has two adapted solutions $(Y^1,Z^1)$ and $(Y^2,Z^2)\in\mathcal{M}[T-\delta, T]$, set $\overline{s}=\sup\{s\in[T-\delta,T]:Y^1(t,s)=Y^2(t,s) \text{~~in~~} T-\delta\leq t\leq s\leq T\}$. If $\overline{s}=T-\delta$, a straightforward application of the inequality (11) from \cite{song2017multi} implies that \((Y^1,Z^1)(t,s) = (Y^2,Z^2)(t,s)\) for \(T - \delta \leq t \leq s \leq T\). The proof is finished. If $\overline{s}>T-\delta$, we consider the BSVIE with a updated terminal datum: 
\begin{equation} \label{Updated BSVIE}
    \begin{cases}
        \begin{aligned}
            dY(t,s) & ~=~ g(t,s,Y(t,s),Z(t,s),Y(s,s),Z(s,s))ds+Z(t,s)dB(s), \\
        
            Y(t,T) & ~=~ \overline{\xi}(t), \quad 0\leq t \leq s\leq \overline{s}, 
        \end{aligned}  
    \end{cases}
\end{equation} 
where $\overline{\xi}(t)=Y^1(t,\overline{s})=Y^2(t,\overline{s})$. Remarkably, since \((Y,Z)(s,s)\) is well-defined on \([\overline{s},T]\), the two solutions \((Y^1,Z^1)(t,s)\) and \((Y^2,Z^2)(t,s)\) naturally extend to \((t,s) \in ([0,\overline{s}] \times [\overline{s},T]) \cup \Delta[\overline{s},T]\) from \(\Delta[\overline{s},T]\); see Step 2 of the proof of Theorem 3.1 in \cite{Wang2020}. As a result, \(\overline{\xi}(t)\) is well-defined for \(t \in [0,\overline{s}]\). The proof in Step 1 above shows that \eqref{Updated BSVIE} has a unique solution in the set $B_{\delta^\prime,\overline{s}}(R^\prime):=\big\{(Y,Z)(t,s)\in\mathcal{M}[\overline{s}-\delta^\prime, \overline{s}]:\|(Y,Z)\|^2_{\mathcal{M}[\overline{s}-\delta^\prime, \overline{s}]}\leq R^\prime\big\}$, provided that $R^\prime$ is sufficiently large and $\delta^\prime$ is small enough. Taking $R^\prime$ larger than both $\|(Y^1,Z^1)(t,s)\|^2_{[T-\delta,\overline{s}]}$ and $\|(Y^2,Z^2)(t,s)\|^2_{[T-\delta,\overline{s}]}$, we obtain $Y^1(t,s)=Y^2(t,s)$ in $[\overline{s}-\delta^\prime,\overline{s}]$, which contradicts the definition of $\overline{s}$. Therefore, with \(\overline{s} = T - \delta\), we have \(Y^1(t,s) = Y^2(t,s)\) in $\Delta[T-\delta,T]$, which in turn leads to \(Z^1(t,s) = Z^2(t,s)\). 

\vspace{0.3cm}

\noindent\textbf{Step 3.} Finally, we complete the proof by demonstrating that the local well-posedness of solutions to \eqref{GeneralCase} on \( \Delta[T - \delta, T] \), established in Steps 1 and 2, can be extended to any arbitrarily large time interval \( \Delta[0, T] \). Specifically, this approach redefines \( T - \delta \) as the new terminal time and uses \( Y(t, T - \delta) \) for \( t \in [0, T - \delta] \) as the updated terminal condition, as described in Step 2. It allows the solution to be extended over a larger time interval; see also Step 2 of the proof of Theorem 3.1 in \cite{Wang2020}. This process can be repeated indefinitely until a potential explosion point is encountered. To ensure that \( \Gamma \) retains its contractive property in this process, it is crucial to maintain a proper balance between \( R \) and \( \delta \) in \eqref{Contraction}, noted the facilitation of the fixed-point arguments in Step 1. If \( R \) is allowed to grow indefinitely during these repeated extensions, the corresponding interval length would eventually shrink toward zero and halt the extension process. Therefore, to ensure each extension associated with an interval of strictly positive period, it is necessary to avoid any instance where \( R \) encounters an explosion.

To this end, we first provide an a-priori estimate for solutions of the BSVIE \eqref{GeneralCase}, showing that any solution of \eqref{GeneralCase} in \( \mathcal{M}[0,T] \) is bounded by an absolute constant \( C > 0 \) that depends only on the intrinsic system parameters \( d \), \( n \), \( L \), \( T \), \( \xi \), and \( g(t,s,0,0,0,0) \). We assume there exists a solution \((Y, Z) \in \mathcal{M}[0, T]\) for \eqref{GeneralCase} in \(\Delta[0, T]\). By directly applying Propositions 5.1 and 5.5 of \cite{song2017multi} to the BSVIE system \eqref{SysYYt} in $\Delta[T-\delta,T]$ and choosing a small enough $\delta>0$, we obtain the following results:
\begin{equation*}
    \begin{split}
        \|(Y,Z,Y_t,Z_t)\|^2_{\mathcal{N}[T-\delta,T]} \leq & C(\delta)\|(Y,Z,Y_t,Z_t)\|^2_{\mathcal{N}[T-\delta,T]}+C\left(\|\xi\|^2_{\Lambda^\infty_{\mathcal{F}_T}([T-\delta,T])}+\|g_0\|^2_{\Theta^{\textsc{BMO}}_{\mathbb{F}}(\Delta[T-\delta,T])}\right) \\
        \leq & \frac{1}{2}\|(Y,Z,Y_t,Z_t)\|^2_{\mathcal{N}[T-\delta,T]}+C\left(\|\xi\|^2_{\Lambda^\infty_{\mathcal{F}_T}([0,T])}+\|g_0\|^2_{\Theta^{\textsc{BMO}}_{\mathbb{F}}(\Delta[0,T])}\right), 
    \end{split}
\end{equation*}
where $(Y,Z,Y_t,Z_t)$ is understood as a vector of the product space between two spaces of $\mathcal{N}[T-\delta,T]$, $g_0=g(t,s,0,0,0,0)$, \( C \) is a positive constant defined as \( C := C(n, k, L, T) > 0 \), and \( C(\delta):=C(n, k, L, T, \delta) \) denotes a certain modulus of continuity. These generic constants could vary from line to line. Consequently, by repeatedly applying the inequality above over sufficiently small sub-intervals, we obtain a finite constant \( N > 0 \) that depends only on the intrinsic parameters \( d \), \( n \), \( L \), \( T \), \( \|\xi\|^2_{\Lambda^\infty_{\mathcal{F}_T}([0,T])} \), and \( \|g_0\|^2_{\Theta^{\textsc{BMO}}_{\mathbb{F}}(\Delta[0,T])} \), such that
\begin{equation} \label{BoundednessYYt}
    \|(Y,Z,Y_t,Z_t)\|^2_{\mathcal{N}[0,T]}\leq N. 
\end{equation}
It is important to note that this process requires only a finite number of deterministic steps, differing from the potential explosion points discussed earlier, as \( C(\delta) \) is a finite quantity that depends solely on the system parameters \( (n, k, L, T) \).

Similarly, applying Propositions 5.1 and 5.5 from \cite{song2017multi} to the first BSDE of \eqref{MalliavinYYt} yields
\begin{equation*}
    %\begin{split}
    %    & \|(D_\theta Y,D_\theta Z)\|^2_{\mathcal{N}[T-\delta,T]} \\
    %
    %    \leq & C(\delta)\|(D_\theta Y_t,D_\theta Z_t)\|^2_{\mathcal{N}[T-\delta,T]}+C\left(\|\xi\|^2_{\Lambda^\infty_{\mathcal{F}_T}([0,T])}+\|g_0\|^2_{\Theta^{\textsc{BMO}}_{\mathbb{F}}(\Delta[0,T])}\right). 
    %\end{split}
    \|(D_\theta Y,D_\theta Z)\|^2_{\mathcal{N}[T-\delta,T]} \leq C(\delta)\|(D_\theta Y_t,D_\theta Z_t)\|^2_{\mathcal{N}[T-\delta,T]}+C\left(\|\xi\|^2_{\Lambda^\infty_{\mathcal{F}_T}([0,T])}+\|g_0\|^2_{\Theta^{\textsc{BMO}}_{\mathbb{F}}(\Delta[0,T])}\right). 
\end{equation*}
Subsequently, by choosing a small enough $\delta>0$, the second BSDE of \eqref{MalliavinYYt} tells us that 
\begin{equation*}
    \begin{split}
         & \|(D_\theta(Y,Z,Y_t,Z_t))\|^2_{\mathcal{N}[T-\delta,T]} \\
         \leq & C(\delta,N)\cdot\|D_\theta(Y,Z,Y_t,Z_t)\|^2_{\mathcal{N}[T-\delta,T]}+C(N)\cdot\left(\|\xi\|^2_{\Lambda^\infty_{\mathcal{F}_T}([0,T])}+\|g_0\|^2_{\Theta^{\textsc{BMO}}_{\mathbb{F}}(\Delta[0,T])}\right) \\ 
         \leq & \frac{1}{2}\|D_\theta(Y,Z,Y_t,Z_t)\|^2_{\mathcal{N}[T-\delta,T]}+C(N)\left(\|\xi\|^2_{\Lambda^\infty_{\mathcal{F}_T}([0,T])}+\|g_0\|^2_{\Theta^{\textsc{BMO}}_{\mathbb{F}}(\Delta[0,T])}\right). 
    \end{split} 
\end{equation*}
By the same arguments, repeatedly applying the inequality above over sufficiently small intervals leads to the conclusion that there exists a finite constant \( M > 0 \) that depends only on the intrinsic parameters \( d \), \( n \), \( L \), \( T \), \( \|\xi\|^2_{\Lambda^\infty_{\mathcal{F}_T}([0,T])} \), and \( \|g_0\|^2_{\Theta^{\textsc{BMO}}_{\mathbb{F}}(\Delta[0,T])} \), such that
\begin{equation} \label{BoundednessMYYt}
    \|D_\theta(Y,Z,Y_t,Z_t)\|^2_{\mathcal{N}[0,T]}\leq M. 
\end{equation}

  Before reaching \eqref{BoundednessMYYt}, the derivation only made use of the \(\|\cdot\|_{\mathcal{N}}\)-boundedness of \(Z\) in \(\Theta^{\textsc{BMO}}_{\mathbb{F}}(\Delta[0,T])\) as shown in \eqref{BoundednessYYt}. At this point, we only know that \(Z\) is integrable and do not assume that it is a bounded process. More specifically, we only used the boundedness $N$ of \(\|Z\|_{\Theta^{\textsc{BMO}}_{\mathbb{F}}(\Delta[0,T])}\) to establish \eqref{BoundednessMYYt} without relying on the undetermined \(\|Z\|_{\Theta^{\textsc{BMO}}_{\mathbb{F},c}}\)-norm. The boundedness of \(\|Z\|_{\Theta^{\textsc{BMO}}_{\mathbb{F},c}(\Delta[0,T])}\) is, in fact, the result we desire to prove, and thus we cannot presume it prior to showing it. Next, by recognizing that \(Z(t,s) = D_s Y(t,s)\) and \(Z_t(t,s) = D_s Y_t(t,s)\), the results in \eqref{BoundednessYYt} and \eqref{BoundednessMYYt} altogether imply that \(Z(t,s)\) is not only in \(\Omega^{\textsc{BMO}}_{\mathbb{F}}(\Delta[0,T])\) but also belongs to \(\Omega^{\textsc{BMO}}_{\mathbb{F},c}(\Delta[0,T])\).

Consequently, by combining \eqref{BoundednessYYt} and \eqref{BoundednessMYYt}, we deduce the existence of a constant \(C\), dependent only on the system parameters \(d\), \(n\), \(L\), \(T\), \(\|\xi\|^2_{\Lambda^\infty_{\mathcal{F}_T}([0,T])}\), and \(\|g_0\|^2_{\Theta^{\textsc{BMO}}_{\mathbb{F}}(\Delta[0,T])}\), which controls the behavior of \((Y, Z)(t,s)\) in the space \(\mathcal{M}[0,T]\). Specifically, we have \(\|(Y, Z)\|_{\mathcal{M}[0,T]} \leq C\). This allows the process of extending the solution from a local interval to a larger one to be repeated indefinitely, leading to the construction of a global solution to \eqref{GeneralCase} over any arbitrarily large time interval \( \Delta[0, T] \).
\end{proof}

The proof of Theorem \ref{GeneralWellp} presents a novel approach that deals with non-Lipschitz structure of the coupled system \eqref{SysYYt}-\eqref{MalliavinYYt}, while one can find similar attempts in the PDE literature, such as \cite{Lunardi1995, lunardi2002nonlinear, LeiSDG, lei2023well}. In contrast, the standard approach for studying nonlinear BSDEs uses fixed-point arguments to establish the (local) existence of the solutions while for the uniqueness, shows the unique trivial solution to the linear BSDEs with bounded coefficients (under Lipschitz assumptions) and a zero terminal condition that drive the difference between two potential solutions. We stress that the standard approach cannot be applied to show the existence result for the non-Lipschitz structure. Though, for the uniqueness, following the standard approach is possible. To see this, one may similarly consider two solutions of \eqref{GeneralCase} in $\mathcal{M}[0,T]$ whose difference satisfies a linear \textit{nonlocal} BSDE system with bounded coefficients. Note that the boundedness of its coefficients arises from the properties of the \( \mathcal{M} \)-space rather than by making assumptions on the generator as in \cite{hernandez2023me, Hernandez2021a}. Consequently, the well-posedness results in \cite{lei2024MV} imply that the two solutions are identical. 
% However, this method does not directly apply to our BSVIE \eqref{GeneralCase} or the equivalent system in \eqref{SysYYt} and \eqref{MalliavinYYt}, due to their inherently non-Lipschitz structure. Analyzing the difference between two solutions could result in an unaddressed stochastic Lipschitz BSVIE. 
%However, in the proof above, we employed a distinctly different approach. We instead use the contraction mapping established in Step 1 to derive a contradiction in Step 2, thereby demonstrating the uniqueness of the solution. This technique is commonly employed in the PDE literature, also see \cite{Lunardi1995, lunardi2002nonlinear, LeiSDG, lei2023well}. 

\begin{corollary} \label{Cor:Zrep}
    The martingale integrand process $\{Z(t,s)\}_{0\leq t\leq s\leq T}$ of the unique adapted solution in $\mathcal{M}[0,T]$ of \eqref{GeneralCase} can be expressed in terms of the trace of the Malliavin derivative of the solution process $\{Y(t,s)\}_{0\leq t\leq s\leq T}$, i.e., $Z(t,s)=D_sY(t,s)$ for $0\leq t\leq s\leq T$. 
\end{corollary}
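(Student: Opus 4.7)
The plan is to derive this identity by combining the classical Malliavin representation for BSDEs with the regularity of adapted solutions lying in $\mathcal{M}[0,T]$, which was established in Theorem \ref{GeneralWellp}. The key observation is that for each fixed reference time $t \in [0,T]$, the mapping $s \mapsto (Y(t,s), Z(t,s))$ satisfies a classical BSDE on $[t,T]$ with terminal value $\xi(t)$ and driver $\tilde g(r,\omega) := g(t, r, Y(t,r), Z(t,r), Y(r,r), Z(r,r))$, where the diagonal processes $(Y,Z)(r,r)$ are now treated as given, well-defined inputs (this is precisely what membership in $\mathcal{M}[0,T]$ guarantees, since $Z(t,s)$ admits a continuous version in $s$ and therefore the diagonal is unambiguous).

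First, I would verify that for each fixed $t$, the driver $\tilde g$ and the terminal datum $\xi(t)$ satisfy the hypotheses of Proposition 5.3 of \cite{Karoui1997}: Assumption (A1) gives Malliavin differentiability of $\xi(t)$, while (A2)--(A3) and the fact that $(Y,Z) \in \mathcal{M}[0,T]$ (together with the Malliavin differentiability of $(Y,Z,Y_t,Z_t)$ obtained in the proof of Theorem \ref{GeneralWellp}) yield Malliavin differentiability of $\tilde g(r,\omega)$ in $r$, along with the required integrability and Lipschitz properties of its Malliavin derivative. In particular, $D_\theta \tilde g$ inherits appropriate $L^2$-type bounds uniformly in $(t,\theta)$ from the $\Theta^{\textsc{BMO}}_{\mathbb{F},b}$ and $\Lambda^\infty_{\mathbb{F},c}$ norms.

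Second, I would invoke the classical representation result: for each fixed $t$, Proposition 5.3 of \cite{Karoui1997} yields the identity $D_s Y(t,s) = Z(t,s)$ for almost every $s \in [t,T]$ and almost surely. The same identity was already used within the proof of Theorem \ref{GeneralWellp} in the estimate leading to \eqref{EstimateDeltaYZ}, so only the assembly into a global statement over $\Delta[0,T]$ remains.

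Finally, to promote this equality from an almost-everywhere statement in $s$ to a pointwise statement on $\Delta[0,T]$, I would use the regularity encoded in $\mathcal{M}[0,T]$: by definition, $s \mapsto Y(t,s)$ admits a continuous version, and the bounds on $D_\theta Y(t,s)$ together with Proposition 5.3 of \cite{Karoui1997} ensure that $s \mapsto D_s Y(t,s)$ is also continuous in the appropriate sense, matching the continuity of $s \mapsto Z(t,s)$ in $\Theta^{\textsc{BMO}}_{\mathbb{F},b}$. Both sides therefore agree as processes on $\Delta[0,T]$. I do not anticipate a serious obstacle here: all the heavy lifting has already been done in the proof of Theorem \ref{GeneralWellp}, so the corollary is essentially a transcription of the classical BSDE identity, valid now because membership in $\mathcal{M}[0,T]$ rules out the pathological behavior of $Z$ in the $s$-direction that obstructs the naive definition of the diagonal process.
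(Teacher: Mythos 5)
Your proposal is correct and follows essentially the same route as the paper, which likewise derives the identity by freezing $t$, viewing $s\mapsto (Y(t,s),Z(t,s))$ as a classical BSDE with the diagonal terms as known inputs, and invoking Proposition 5.3 of \cite{Karoui1997} (the paper simply states that the corollary ``directly follows'' from the Malliavin differentiability theory of BSDEs, having already used $D_sY(t,s)=Z(t,s)$ inside the proof of Theorem \ref{GeneralWellp}). Your additional step of upgrading the a.e.-in-$s$ identity to a pointwise one via the continuity encoded in $\mathcal{M}[0,T]$ is a reasonable elaboration of what the paper leaves implicit.
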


Corollary \ref{Cor:Zrep} directly follows from the study of Malliavin differentiability of BSDEs, as discussed in \cite{Pardoux1992, Karoui1997, hu2011malliavin, yong2008well, Wang2020}. Interestingly, the trace process \( \{D_s Y(t,s)\}_{0 \leq t \leq s \leq T} \) resembles a diagonal process. However, what distinguishes it from the diagonal process \( \{Z(s,s)\}_{0 \leq s \leq T} \) is that \(D_s Y(t,s) = \lim_{\tau \to s^+} D_s Y(t,\tau)\) is always well-defined because \(D_\theta Y(t,s)\) is continuous in the \(s\)-direction. Moreover, from the expression \( Z(t,s) = D_s Y(t,s) \), we can see that imposing the continuity of \( Z(t,s) \) in \(s\) is more demanding than the continuity of \( D_\theta Y(t,s) \) in \(s\) as the former would also require the continuity of \( D_\theta Y(t,s) \) in \(\theta\). This observation motivates our further investigation.

Next, we aim to show that the mapping \(s \mapsto (Y,Z)(t,s)\) is continuous by imposing additional continuity assumptions (A4) and (A5) below to the Malliavin derivatives of the generator $g$ and terminal datum $\xi$ of \eqref{General BSVIE} on top of the original assumptions (A1)–(A3). Such a result not only serves as an effective tool for estimating \( \mathbb{E}[\sup_s |Z(t,s)|^2] \), which is crucial for estimating the rate of convergence in numerical schemes as discussed in \cite{hu2011malliavin}, but also ensures that the intuitive re-formulation of the diagonal processes \(Z(s, s) := Z(s, \tau)|_{\tau=s} = D_\tau Y(s, \tau)|_{\tau=s} = \lim_{\tau\to s^+}D_\tau Y(s,\tau)\) is well-defined. This perspective of handling the diagonal processes, particularly for \( \{Z(s,s)\}_{0 \leq s \leq T} \), significantly differs from the existing approaches in the BSVIE literature \cite{Hamaguchi2021, hernandez2023me, Hernandez2021a, Wang2021, lei2023nonlocal, LeiSDG, lei2023well}. Moreover, the path regularity of solutions to \( Z(s,s) \)-dependent BSVIEs has been primarily studied in the context of Markovian cases \cite{Wang2021, lei2023nonlocal, LeiSDG, lei2023well}, where the terminal value and generator are functions of a forward diffusion. In contrast, we consider the general cases.

In addition to (A1)-(A3), we further impose the following conditions on the Malliavin derivatives of $(\xi,g)$ of \eqref{GeneralCase}.
\begin{enumerate}[start=4,label={\upshape(\bfseries A\arabic*).}]
    \item There exist $\beta$, $L>0$ such that for all $\theta$, $\theta^\prime\in[0,T]$ and $\eta\in\{\xi,\xi_t\}$,  
    \begin{equation*}
        \mathbb{E}\big[|D_\theta\eta-D_{\theta^\prime}\eta|^2\big]\leq L|\theta-\theta^\prime|^{1+\beta}.   
    \end{equation*}
    \item (Under assumptions (A1)-(A3)) Let $(Y,Z)$ be the unique solution in $\mathcal{M}[0,T]$ to \eqref{GeneralCase} in $\Delta[0,T]$. Moreover, there exist $\beta$, $L>0$ such that for all $0\leq\theta,\theta^\prime\leq s\leq T$ and $h\in\{g,g_t,g_Y,g_Z\}$, 
    \begin{equation*}
        \begin{split}
            & \mathbb{E}\Big[\int\nolimits_s^T|D_\theta h(t,\tau,Y(t,\tau),Z(t,\tau),Y(\tau,\tau),Z(\tau,\tau)) \\ 
            &  \qquad  
            -D_{\theta^\prime} h(t,\tau,Y(t,\tau),Z(t,\tau),Y(\tau,\tau),Z(\tau,\tau))|^2d\tau \Big] \leq L|\theta-\theta^\prime|^{1+\beta}. 
        \end{split}
    \end{equation*}
\end{enumerate}

Notably, the additional Hölder continuity conditions in (A4)–(A5) only need to hold provided the \(\mathcal{F}_0\)-information set. The following theorem then concludes our desired claims.
%The second main result of this section is presented in the following theorem.
\begin{theorem} \label{ContinuityZ}
    Under assumptions $\mathrm{(A1)}$-$\mathrm{(A5)}$, the BSVIE \eqref{GeneralCase} admits a unique adapted solution $(Y,Z)(t,s)\in\mathcal{M}_c[0,T]$. The solution process $\{Y(t,s)\}_{0\leq t\leq s\leq T}$ and the martingale integrand process $\{Z(t,s)\}_{0\leq t\leq s\leq T}$ are both $\mathbb{F}$-progressively measurable bounded and continuous processes. Consequently, the diagonal process pair of $\{(Y,Z)(s,s)\}_{0\leq s\leq T}$ represented by $(Y,Z)(s,s)=(Y,Z)(s,\tau)|_{\tau=s}=\lim_{\tau \to s^+} (Y,Z)(s,\tau)$ is well-defined.  
\end{theorem}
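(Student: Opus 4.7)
The main new content beyond Theorem \ref{GeneralWellp} is the continuity of $s\mapsto (Y,Z)(t,s)$, which then legitimizes the intuitive definition of the diagonal processes as right-limits. The plan is to exploit the Malliavin representation $Z(t,s)=D_sY(t,s)$ from Corollary \ref{Cor:Zrep} and promote $L^2$--Hölder estimates in $\theta$ to joint continuity in $(\theta,s)$ via Kolmogorov's criterion.

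First, existence and uniqueness in $\mathcal{M}[0,T]$ is already furnished by Theorem \ref{GeneralWellp}; assumption (A5) is compatible with this since it is stated \emph{given} the solution produced there. Continuity of $s\mapsto Y(t,s)$ on $[t,T]$ for fixed $t$ is immediate: for each $t$, $Y(t,\cdot)$ is the first component of an adapted solution to a BSDE with bounded continuous (in fact, BMO) data, hence admits a continuous modification by standard BSDE theory. The work therefore concentrates on $Z$.

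For $Z$, I would proceed as follows. Fix $t$ and, for $\theta,\theta'\in[0,T]$, consider the difference $D_\theta Y(t,\cdot)-D_{\theta'}Y(t,\cdot)$ (extended by $0$ where appropriate), which satisfies a linear BSDE obtained by subtracting the first equation of \eqref{MalliavinYYt} at parameter $\theta$ and at $\theta'$. All coefficients multiplying the unknown (namely $\widetilde g_Y$, $\widetilde g_Z$, $\overline g_{\overline Y}$, $\overline g_{\overline Z}$) are bounded by (A2), the processes $(Y_t,Z_t)$ entering the equation for $D_\theta Y_t$ are bounded in $\mathcal{N}$-norm by \eqref{BoundednessYYt}, and the inhomogeneities collapse to terms of the form $D_\theta h(\cdots)-D_{\theta'}h(\cdots)$ for $h\in\{g,g_t,g_Y,g_Z\}$ together with the terminal differences $D_\theta\xi(t)-D_{\theta'}\xi(t)$ and $D_\theta\xi_t(t)-D_{\theta'}\xi_t(t)$. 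Applying the a priori estimates of \cite[Propositions~5.1 and~5.5]{song2017multi} (in the same bootstrapping fashion used in Step~3 of the proof of Theorem \ref{GeneralWellp}) together with (A4)--(A5) yields, uniformly in $(t,s)$,
\begin{equation*}
\mathbb{E}\bigl[\,|D_\theta Y(t,s)-D_{\theta'}Y(t,s)|^{2}\,\bigr]
+\mathbb{E}\Bigl[\int_{s}^{T}|D_\theta Z(t,\tau)-D_{\theta'}Z(t,\tau)|^{2}\,d\tau\Bigr]
\;\leq\; C\,|\theta-\theta'|^{1+\beta}.
\end{equation*}
Because \eqref{BoundednessMYYt} already controls $D_\theta(Y,Z,Y_t,Z_t)$ in the $\mathcal{N}$-norm uniformly, standard $L^p$ BSDE estimates upgrade this bound to any $p\geq 2$, yielding $\mathbb{E}[|D_\theta Y(t,s)-D_{\theta'}Y(t,s)|^{p}]\leq C_p|\theta-\theta'|^{p(1+\beta)/2}$ by interpolation with the uniform $L^\infty$-bound on $D_\theta Y$.

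With this $L^p$-Hölder estimate in $\theta$ (uniform in $s$) on one hand, and continuity of $s\mapsto D_\theta Y(t,s)$ for fixed $\theta$ on $[\theta,T]$ (again, standard BSDE theory applied to the first equation of \eqref{MalliavinYYt} with bounded coefficients and $L^p$ data) on the other, I would apply Kolmogorov's continuity theorem on the square $[0,T]^2$ to obtain a jointly continuous modification of $(\theta,s)\mapsto D_\theta Y(t,s)$ on $\{\theta\leq s\}$. Restricting to the diagonal then gives $s\mapsto D_sY(t,s)=Z(t,s)$ continuous on $[t,T]$. Hence $(Y,Z)\in\mathcal{M}_c[0,T]$, and $(Y,Z)(s,s)=\lim_{\tau\to s^+}(Y,D_\tau Y)(s,\tau)$ is unambiguously defined.

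The main obstacle I anticipate is the bookkeeping in deriving the $\theta$-Hölder estimate cleanly: the equation for $D_\theta Y_t$ in \eqref{MalliavinYYt} is long and its coefficients depend on $\theta$ through nested Malliavin derivatives of $g_t,g_Y,g_Z$. To keep this manageable I would treat the $(D_\theta Y,D_\theta Z)$ equation first (where $(Y_t,Z_t)$ appears only as bounded data) to obtain the Hölder estimate for $D_\theta Y$, and only afterwards feed it, via (A5) applied to $g_Y$ and $g_Z$, into the $(D_\theta Y_t,D_\theta Z_t)$ equation if needed. A secondary delicacy is the boundary $s=\theta$: on $\{s<\theta\}$ one has $D_\theta Y=D_\theta Z=0$, so Kolmogorov must be applied to the extended process, but the estimate above already accounts for the jump by the $L^\infty$-bound on $D_\theta Y$ near the boundary, which together with the Hölder continuity in $\theta$ suffices.
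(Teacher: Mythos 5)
Your proposal follows essentially the same route as the paper: reduce continuity of $s\mapsto Z(t,s)$ to continuity of $\theta\mapsto D_\theta Y(t,s)$ via the representation $Z(t,s)=D_sY(t,s)$ and the triangle-inequality splitting, derive an $L^2$ H\"older estimate in $\theta$ for the difference $D_\theta(Y,Y_t)-D_{\theta'}(Y,Y_t)$ from the linear system obtained from \eqref{MalliavinYYt} (whose coefficients are bounded because $(Y,Z)\in\mathcal{M}[0,T]$), feed in (A4)--(A5) for the inhomogeneities, and conclude by Kolmogorov's continuity theorem. The paper stops at the $L^2$ bound and applies the one-parameter Kolmogorov criterion in $\theta$ (exponent $1+\beta$ with $a=2$ suffices there), whereas you additionally try to upgrade to joint $(\theta,s)$-continuity via a two-parameter Kolmogorov argument. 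That extra step as written does not work: interpolating the $L^2$ bound against the uniform $L^\infty$ bound gives $\mathbb{E}[|D_\theta Y-D_{\theta'}Y|^{p}]\leq C_p|\theta-\theta'|^{1+\beta}$, i.e.\ the exponent $1+\beta$ is preserved, not multiplied up to $p(1+\beta)/2$, so for $\beta<1$ the two-parameter criterion (which needs exponent strictly greater than $2$) is not met by this route; one would instead have to rerun the a priori estimates directly in $L^p$ with $L^p$ versions of (A4)--(A5). Since the paper itself relies only on the one-parameter estimate combined with the $s$-continuity of $D_\theta Y(t,\cdot)$ already provided by Theorem \ref{GeneralWellp}, this flaw concerns an embellishment rather than the core argument, and the rest of your proof matches the paper's.
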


\begin{proof}
    First of all, Theorem \ref{GeneralWellp} promises that the BSVIE \eqref{GeneralCase} admits a global unique adapted solution in $\mathcal{M}[0,T]$ and that the mapping $s\mapsto (D_\theta Y,D_\theta Y_t)(t,s)$ is continuous almost surely. Thanks to the representations of $Z(t,s)=D_sY(t,s)$ and $Z_t(t,s)=D_sY_t(t,s)$, we have 
    \begin{equation*}
        \begin{split}
            |Z(t,s)-Z(t,s^\prime)| & = |D_sY(t,s)-D_{s^\prime}Y(t,s^\prime)| \\
            & \leq |D_sY(t,s)-D_{s^\prime}Y(t,s)|+|D_{s^\prime}Y(t,s)-D_{s^\prime}Y(t,s^\prime)|. 
        \end{split}
    \end{equation*}
    A similar estimation also holds for \( Z_t(t,s) \). Hence, we only need to investigate the continuity of $\theta\mapsto(D_\theta Y,D_\theta Y_t)(t,s)$. By following the arguments of Theorem 2.6 in \cite{hu2011malliavin}, it is clear from \eqref{MalliavinYYt} that the BSVIE system satisfied by the difference between $D_\theta(Y,Z,Y_t,Z_t)$ and $D_{\theta^\prime}(Y,Z,Y_t,Z_t)$ reduces to a linear nonlocal BSDE (BSVIE) system with bounded coefficients provided that $(Y,Z)\in\mathcal{M}[0,T]$; see \cite{lei2024MV}. Utilizing the continuous dependence of solutions on system parameters \((g, \xi)\) from Lemma 2.2 in \cite{hu2011malliavin}, alongside with the arguments in \cite{lei2024MV} and the proof of Theorem \ref{GeneralWellp}, it can be shown that 
    \begin{equation*}
        \mathbb{E}\big[|D_\theta Y(t,s)-D_{\theta^\prime}Y(t,s)|^2\big]+\mathbb{E}\big[|D_\theta Y_t(t,s)-D_{\theta^\prime}Y_t(t,s)|^2\big]\leq L|\theta-\theta^\prime|^{1+\beta}.
    \end{equation*}
    As a result, by Kolmogorov's continuity theorem, there exists a Hölder-\(\gamma\) continuous modification of \(\theta \mapsto (D_\theta Y, D_\theta Y_t)(t, s)\) for any \(\gamma \in (0, \frac{\beta}{2})\). The proof is completed.
\end{proof}

From Theorem \ref{GeneralWellp} and Theorem \ref{ContinuityZ}, it is evident that Malliavin calculus is a crucial tool for analyzing the general BSVIE \eqref{GeneralCase}. This approach allows us to study the pointwise behavior of \( s \mapsto (Y, Z)(t, s) \), going beyond integrability. It offers two key advantages: (1) a clear definition of the diagonal process \( (Y, Z)(s, s) = (Y, Z)(s, \tau) |_{\tau = s} \) and (2) under minimal assumptions on the nonlinearity of \(g\), it ensures the existence and uniqueness of the solutions over an arbitrarily large time interval. Moreover, in a similar spirit of \cite{hu2011malliavin}, we can even improve the regularity of \( s \mapsto (Y, Z)(t, s) \) to H\"{o}lder continuity.

% In our well-posedness analysis framework, we extensively leverage Malliavin calculus to investigate the differentiation on the Wiener space of BSVIE solutions. As a result, we obtain certain pointwise behavior (rather than merely integrability behavior) of \( t \mapsto (Y,Z)(s,t) \). This approach brings two significant advantages: (1) it ensures that the intuitive definition of the diagonal process \( (Y,Z)(s,s) = (Y,Z)(s,\tau)|_{\tau=s} \) is well-defined. (2) the fact that \( D_s Y(t,s) = Z(t,s) \) reveals the relationship between the solution process and the martingale integrand process. This result serves as an efficient tool for estimating \( \mathbb{E}[\sup_s |Z(t,s)|^2] \), which is a critical component for the rate estimation in our numerical schemes, also see \cite{hu2011malliavin}.

%--------------------------------------------------------------------------------

\subsection{Extensions to Non-Uniformly Lipschitz Cases}
Previously, we imposed uniform Lipschitz conditions on the generator and its first- and second-order derivatives to simplify the setup and highlight our methodology. In this subsection, we relax these assumptions and extend the previous well-posedness results to a non-uniformly Lipschitz setting.

To see the motivation behind the extension, we examine a simple yet common case: a linear BSVIE with stochastic Lipschitz coefficients (where all components are one-dimensional):
\begin{equation} \label{NonuLBSVIE example}
    \begin{cases}
        \begin{aligned}
            dY(t,s) & ~=~ B(s)\cdot(Y(t,s)+Z(t,s)+Y(s,s)+Z(s,s))ds+Z(t,s)dB(s), \\
        
            Y(t,T) & ~=~ \xi(t), \quad 0\leq t \leq s\leq T, 
        \end{aligned}  
    \end{cases}
\end{equation} 
where the generator is Lipschitz with respect to any of its arguments with a Lipschitz constant \( B(s) \) that could vary with trajectory \( \omega \in \Omega \) and may even become unbounded. It is easy to see that the BSVIE \eqref{NonuLBSVIE example} does not satisfy the previous assumptions in Tables \ref{tab:driver1}-\ref{tab:driver2} and thus for it, we need a relaxation of those conditions to establish its well-posedness in a more general setting. Hence, the extension is both theoretically significant and practically relevant, particularly for dynamic mean-variance asset allocation in incomplete markets, as seen in Section \ref{Sec: TICAPP}, where the non-uniformly random coefficients represent the instantaneous Sharpe ratio.

Now, we appropriately relax the Lipschitz and bounded conditions in (A2). The new set of conditions will be collectively referred to as (A2'). 
\begin{enumerate}[start=2,label={\upshape(\bfseries A\arabic*').}]
    \item For any $(t,s)\in\Delta[0,T]$, and any $(y^1,z^1,\overline{y}^1,\overline{z}^1)$ and $(y^2,z^2,\overline{y}^2,\overline{z}^2)\in\mathbb{R}^{2k}\times\mathbb{R}^{2(k\times n)}$, $F_\theta$ refers to the functions with $\surd_p$ ($p=2$ or $4$) in Tables \ref{tab:driver3}-\ref{tab:driver4} and
    \begin{enumerate}[label=(\alph*)]
        \item Non-uniformly Lipschitz condition:
        \begin{equation*}
        \begin{split}
            &|F_\theta(t,s,y^1,z^1,\overline{y}^1,\overline{z}^1)-F_\theta(t,s,y^2,z^2,\overline{y}^2,\overline{z}^2)| \\
            \\[-18pt]
            & \qquad \leq K_\theta(t,s,\omega)\big(|y^1-y^2|+|z^1-z^2|+|\overline{y}^1-\overline{y}^2|+|\overline{z}^1-\overline{z}^2|\big),
        \end{split}
        \end{equation*} 
        where $\{K_\theta(t,s,\cdot)\}_{0\leq t,\theta\leq s\leq T}$ is an $\mathbb{R}^+$-valued $\mathbb{F}$-adapted process satisfying
        \begin{equation*}
            \sup_{0\leq\theta\leq T}\sup_{0\leq t\leq s\leq T}\Big\{\Big\|\mathbb{E}\Big[\int\nolimits_s^T|K_\theta(t,\tau,\cdot)|^pd\tau\Big|\mathcal{F}_s\Big]\Big\|_\infty\Big\}<\infty. 
        \end{equation*}
    \item Potential boundedness: 
    \begin{equation*}
        \sup_{0\leq\theta\leq T}\sup_{0\leq t\leq s\leq T}\Big\|\mathbb{E}\Big[\int\nolimits_s^T|F_\theta(t,\tau,0,0,0,0)|^pd\tau\Big|\mathcal{F}_s\Big]\Big\|_\infty<\infty.
    \end{equation*}
    \end{enumerate}
\end{enumerate}

% \begin{assumption}
%     Non-uniformly bounded and Non-uniformly Lipschitz conditions of $F_\theta$ with $\surd_p$ ($p=2$ or $4$) in Table \ref{tab:driver3}-\ref{tab:driver4}: For any $(t,s)\in\nabla[0,T]$, and any $(y^1,z^1,\overline{y}^1,\overline{z}^1)$ and $(y^2,z^2,\overline{y}^2,\overline{z}^2)$, 
%     \begin{enumerate}
%         \item Non-uniformly Lipschitz Condition:
%         \begin{equation*}
% \displaystyle{|F_\theta(t,s,y^1,z^1,\overline{y}^1,\overline{z}^1)-F_\theta(t,s,y^2,z^2,\overline{y}^2,\overline{z}^2)|\leq K_\theta(t,s,\omega)\big(|y^1-y^2|+|z^1-z^2|+|\overline{y}^1-\overline{y}^2|+|\overline{z}^1-\overline{z}^2|\big),}
% \end{equation*} 
% where $\{K_\theta(t,s,\cdot)\}_{0\leq t,\theta\leq s\leq T}$ is an $\mathbb{R}^+$-valued adapted process satisfying $\sup_{0\leq\theta\leq T}\sup_{0\leq t\leq s\leq T}\Big\{\Big\|\mathbb{E}\Big[\int\nolimits_s^T|K_\theta(t,\tau,\cdot)|^pd\tau\Big|\mathcal{F}_s\Big]\Big\|_\infty\Big\}<\infty$.
% \item Potential boundedness: 
% \begin{equation*}
%     \sup_{0\leq\theta\leq T}\sup_{0\leq t\leq s\leq T}\Big\{\Big\|\mathbb{E}\Big[\int\nolimits_s^T|F_\theta(t,\tau,0,0,0,0)|^pd\tau\Big|\mathcal{F}_s\Big]\Big\|_\infty\Big\}<\infty. 
% \end{equation*}
%     \end{enumerate}
% \end{assumption}

\begin{table}[!ht] 
	\centering
    \begin{tabular}{c| c c c c c c c c c}
       \hline
       $\alpha$ & $\varnothing$ & $t$ & $D_\theta$ & $s$ & $y$ & $z$ & $\overline{y}$ & $\overline{z}$  \\ % <-- added & and content for each column
      \hline 
      $g_\alpha$ & $\surd_2$ & $\surd_2$ & $\surd_2$ & & $\surd_2$ & $\surd_4$ & $\surd_2$ & $\surd_4$ \\ % <--
      \hline 
    \end{tabular}
	\caption{First-order derivatives of $g$ and $g$ itself ($g_\varnothing=g$) required to be $p$-integrable}
	\label{tab:driver3}
\end{table} 

\begin{table}[!ht] 
  \centering
  \begin{tabular}{c| c c c c c c c c c}
  	\hline
  	\diagbox{$\alpha$}{$g_{\alpha\beta}$}{$\beta$} & $t$ & $D_\theta$ & $s$ & $y$ & $z$ & $\overline{y}$ & $\overline{z}$ \\ % <-- added & and content for each column
  	\hline 
  	$t$ &  & $\surd_2$ &  & $\surd_2$ & 
        $\surd_4$ & $\surd_2$ & $\surd_4$ \\
        $D_\theta$ & $\surd_2$ &  &  & $\surd_2$ & $\surd_2$ &  &  \\% <--
  	$s$ &  &  &  &  & \\
  	$y$ & $\surd_2$ & $\surd_2$ &  & $\surd_2$ & $\surd_4$ & $\surd_2$ & $\surd_4$ \\
  	$z$ & $\surd_4$ & $\surd_2$ &  & $\surd_4$ & $\surd_4$ & $\surd_2$ & $\surd_4$ \\
  	$\overline{y}$ & $\surd_2$ &  &  & $\surd_2$ & $\surd_2$  &  & \\ 
   $\overline{z}$ & $\surd_4$ &  &  & $\surd_4$ & $\surd_4$ & &  \\
  	\hline 
  \end{tabular}
  \caption{Second-order derivatives of $g$ required to be $p$-integrable}
  \label{tab:driver4}
\end{table} 

The conditions in (A2) with \( \surd_{L} \) or \( \surd_{BL} \) in Tables \ref{tab:driver1}-\ref{tab:driver2} represent a special case where \( \{K_\theta(t,s,\cdot)\}_{0 \leq t, \theta \leq s \leq T} \) is a constant \( L \) uniformly in \( (t, s, \theta, \omega) \). Moreover, (A3) can be seen as part of (b) in (A2'). Note that (A2') ensures the existence of a maximally defined solution to the BSVIE \eqref{GeneralCase}, but not necessarily a global one. This means that extending local solutions to a larger time interval may be hindered by potential explosion points. To address this, we will strengthen the assumptions to (A2'') based on (A2'), ensuring global well-posedness while keeping (A2'') much weaker than (A2).

\begin{enumerate}[start=2,label={\upshape(\bfseries A\arabic*'').}]
\item On top of (A2'), the non-uniform Lipschitz conditions for \( g \) and \( g_t \) in Table \ref{tab:driver3} are elevated to hold for \( p = 4 \). Moreover, there exists \( K^\prime_\theta(t,s,\omega) \) such that \( F^\prime_\theta(t, \tau, y, \overline{y}, z, \overline{z}) \leq K^\prime_\theta(t,s,\omega) \) holds uniformly for all arguments and
$$
\sup_{0 \leq \theta \leq T} \sup_{0 \leq t \leq s \leq T} \big\{ \big\| \mathbb{E} \big[ \int_s^T |K^\prime_\theta(t, \tau, \cdot)|^q d\tau \mid \mathcal{F}_s \big] \big\|_\infty \big\} < \infty,
$$
where \( F^\prime_\theta \in \{ g_Y, g_{\overline{Y}} \} \) for \( q = 2 \), \( F^\prime_\theta \in \{ g_Z, g_{\overline{Z}} \} \) for \( q = 4 \), and \( F^\prime_\theta \in \{ g_{ZZ}, g_{Z\overline{Z}} \} \) for \( q = \infty \). 
\end{enumerate}

% \begin{assumption}
%     Assumption (A2') holds, and the non-uniformly Lipschitz conditions of $g$ and $g_t$ holds for $p=4$. Moreover, the potential bounded conditions of $\overline{g}_{ZZ}$ and $\overline{g}_{Z\overline{Z}}$ holds for $p=\infty$. 
% \end{assumption}

It is clear that among these three assumptions, (A2) is the strongest, (A2') is the weakest, and (A2'') is in between them. Based on these conditions, we present the following well-posedness result for solutions of non-uniformly Lipschitz BSVIEs. 

\begin{theorem} \label{ExtendedWellp}
Under assumptions (A1) and (A2'), there are $\delta>0$ and a unique maximally-defined $(Y,Z)(t,s)\in\mathcal{M}[T-\delta,T]$ satisfying \eqref{GeneralCase} in $\Delta[T-\delta,T]$. Furthermore, if assumptions (A1) and (A2'') hold, then $\delta=T$, i.e., the BSVIE \eqref{GeneralCase} admits a unique adapted solution $(Y,Z)(t,s)\in\mathcal{M}[0,T]$. 
\end{theorem}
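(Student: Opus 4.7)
The approach is to adapt the three-step strategy used in the proof of Theorem \ref{GeneralWellp} (local existence via contraction, local uniqueness via contradiction, and global extension via an a priori BMO estimate), with the deterministic-constant Lipschitz bounds everywhere replaced by conditional-expectation BMO bounds coming from (A2') and (A2''). The plan is to first establish the local statement under (A1) and (A2'), then derive the extra a priori estimate that (A2'') provides and use it to rule out finite-time breakdown of the iterative extension.

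For local existence under (A1)+(A2'), I would define the same mapping $\Gamma:B_{\delta,T}(R)\to\mathcal{M}[T-\delta,T]$ as in Step 1 of Theorem \ref{GeneralWellp} and re-derive the contraction inequality \eqref{Contraction}. The key modification is that whenever a Lipschitz constant appears inside a conditional expectation, one applies conditional Cauchy--Schwarz (or conditional H\"{o}lder) to get
\[
\mathbb{E}\Bigl[\int_s^T K_\theta(t,\tau)\,|\Delta(t,\tau)|\,d\tau\,\Big|\,\mathcal{F}_s\Bigr]
\le \Bigl\|\mathbb{E}\Bigl[\int_s^T|K_\theta|^2\,d\tau\,\Big|\,\mathcal{F}_s\Bigr]\Bigr\|_\infty^{1/2}\Bigl(\mathbb{E}\Bigl[\int_s^T|\Delta|^2\,d\tau\,\Big|\,\mathcal{F}_s\Bigr]\Bigr)^{1/2},
\]
and analogously for the trilinear/quartic cross terms $F_\theta\cdot Z_t\cdot Z$ and $F_\theta\cdot(\int Z_t\,d\alpha)\cdot Z_t$ that arise from \eqref{SysYYt}-\eqref{MalliavinYYt}, which is where the $p=4$ entries of Tables \ref{tab:driver3}-\ref{tab:driver4} are consumed (one $1/2$-exponent on the fourth-moment BMO norm of the coefficient, the other on the second-moment BMO norm of the solution factor). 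Absolute continuity of conditional expectations of BMO processes makes each such term carry a prefactor $\Psi(\delta)\downarrow 0$ as $\delta\to 0$, so combining with the a-posteriori $\mathcal{M}$-ball bound $R$ on $(\overline Y,\overline Z)$ yields $\|\Delta\Gamma\|_{\mathcal{M}}^2\le C(R)\Psi(\delta)\|\Delta(\overline Y,\overline Z)\|_{\mathcal{M}}^2$. For $R$ large and $\delta$ small the $\tfrac14$-contraction and self-mapping properties of \eqref{Selfmapping} carry over verbatim, and Banach's fixed-point theorem delivers a unique fixed point in $B_{\delta,T}(R)$. Local uniqueness in the whole space $\mathcal{M}[T-\delta,T]$ follows from the same $\overline{s}=\sup\{\cdot\}$ contradiction argument as in Step 2 of Theorem \ref{GeneralWellp}, applied to the updated BSVIE \eqref{Updated BSVIE}. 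Extending the solution step-by-step as in Step 3 gives a maximally-defined solution; under (A2') alone one cannot yet exclude that the successive radii $R$ explode and the successive $\delta$'s shrink to $0$ before reaching $t=0$.

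The global claim under (A2'') is precisely the step that closes this gap. The strengthening from (A2') to (A2'') enforces $4$-integrability of $g$, $g_t$ and of the first-order derivatives in Table \ref{tab:driver3}, together with uniform conditional $L^\infty$ boundedness of $g_Y,g_{\overline Y},g_Z,g_{\overline Z}$ and of the second derivatives $g_{ZZ},g_{Z\overline Z}$ that multiply $Z_t\cdot Z_t$-type terms in \eqref{MalliavinYYt}. With these ingredients I would reproduce the BMO a priori estimates \eqref{BoundednessYYt}-\eqref{BoundednessMYYt}: apply Propositions 5.1 and 5.5 of \cite{song2017multi} to \eqref{SysYYt}, use $p=4$/$p=4/3$ conditional H\"{o}lder on each cross term so that the $\surd_4$ entries pair with $\surd_2$ entries to absorb the squared-$Z_t$ and squared-$Z$ contributions, then chain the resulting short-interval inequality $\|(Y,Z,Y_t,Z_t)\|_{\mathcal{N}[T-\delta,T]}^2\le\tfrac12\|\cdot\|_{\mathcal{N}[T-\delta,T]}^2+C(\|\xi\|,\|g_0\|)$ across finitely many steps of deterministic length to obtain a global $\|(Y,Z,Y_t,Z_t)\|_{\mathcal{N}[0,T]}\le N$. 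The same argument, now using the boundedness assumptions on $g_Y,g_{\overline Y},g_Z,g_{\overline Z},g_{ZZ},g_{Z\overline Z}$, bootstraps to $\|D_\theta(Y,Z,Y_t,Z_t)\|_{\mathcal{N}[0,T]}\le M$; finally, the representations $Z=D_sY$ and $Z_t=D_sY_t$ upgrade the integrability of $Z$ to the bounded $\Theta^{\textsc{BMO}}_{\mathbb{F},b}$ norm required by $\mathcal{M}[0,T]$. Since the global bound depends only on the intrinsic parameters $(\xi,g_0,L,T)$, the extension in Step 3 can only be invoked a fixed finite number of times, forcing $\delta=T$.

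The main obstacle I expect is the careful exponent bookkeeping in the BMO estimates: each multilinear term in the system \eqref{SysYYt}-\eqref{MalliavinYYt} that involves products of $Z$, $Z_t$, their Malliavin derivatives, and a coefficient from Tables \ref{tab:driver3}-\ref{tab:driver4} must be split so that exactly one $L^\infty(\mathcal{F}_s)$-conditional BMO norm of a coefficient absorbs a matching $L^2$/$L^4$ solution norm, with no circular dependence on the very $\Theta^{\textsc{BMO}}_{\mathbb{F},b}$-norm of $Z$ that we are trying to bound. This is why (A2'') distinguishes the $p=4$ entries (all coefficients multiplying $Z$-type arguments) from the $p=2$ entries (coefficients multiplying $Y$-type arguments), and why the second derivatives $g_{ZZ},g_{Z\overline Z}$ must be uniformly conditionally bounded ($q=\infty$) rather than merely in BMO: the product $Z_t\cdot Z_t$ appearing in \eqref{MalliavinYYt} forbids us from losing any further integrability on that coefficient. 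Once these pairings are verified term by term, the rest of the proof is a direct transcription of the argument in Theorem \ref{GeneralWellp}.
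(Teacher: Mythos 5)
Your proposal follows essentially the same route as the paper's (deliberately brief) proof: rerun the three-step contraction/uniqueness/extension argument of Theorem \ref{GeneralWellp}, replacing the constant Lipschitz bounds by conditional H\"older estimates against the BMO-type norms of $K_\theta$, with the $\surd_4$ entries of Tables \ref{tab:driver3}--\ref{tab:driver4} absorbing the trilinear $Z$-type cross terms and (A2'') supplying the a priori bounds needed to iterate the extension a finite number of times. Your additional bookkeeping of which exponent pairs with which coefficient (including why $g_{ZZ},g_{Z\overline{Z}}$ need $q=\infty$) is consistent with, and somewhat more explicit than, the paper's snapshot.
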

\begin{proof}
    The proof closely follows the that of Theorem \ref{GeneralWellp}. Hence, we provide only a snapshot here rather than a full detailed proof. Let us consider two typical \( F_\theta \) functions with \( \surd_4 \) in Table \ref{tab:driver3}-\ref{tab:driver4}, such as \( g_{zz} \) and \( g_{z\overline{z}} \). These are closely related to \( \widetilde{g}_{ZZ} \) from \eqref{Mapping}, noting that \( \widetilde{g}_{ZZ} = \overline{g}_{ZZ} + \overline{g}_{Z\overline{Z}} \). 
When we construct a contraction similar to the method in Theorem \ref{GeneralWellp} to demonstrate its local well-posedness, \eqref{Mapping} indicates that we need to estimate
\begin{equation} \label{NuniformE1}
    \left\{\sup\limits_{T-\delta\leq t\leq s\leq T}\mathbb{E}\left[\left.\int^T_s\left|K_\theta(t,\tau,\omega)\cdot\Delta \overline{Z}(t,\tau)\cdot D_\theta\overline{Z}^1(t,\tau)\cdot \overline{Z}^1_t(t,\tau)\right|d\tau\right|\mathcal{F}_s\right]\right\}^2   
\end{equation}
and 
\begin{equation*}
    \left\{\sup\limits_{T-\delta\leq t\leq s\leq T}\mathbb{E}\left[\left.\int^T_s\left|\widetilde{g}_{ZZ}\cdot \Delta D_\theta\overline{Z}(t,\tau)\cdot \overline{Z}^1_t(t,\tau)\right|d\tau\right|\mathcal{F}_s\right]\right\}^2.   
\end{equation*}
Using the H\"{o}lder inequality leads us to estimate
\begin{equation} \label{NuniformE2}
    \begin{split}
        & \sup\limits_{T-\delta\leq t\leq s\leq T}\mathbb{E}\left[\left.\int^T_s\left|\widetilde{g}_{ZZ}\right|^2d\tau\right|\mathcal{F}_s\right]\leq 2\sup\limits_{T-\delta\leq t\leq s\leq T}\mathbb{E}\left[\left.\int^T_s\left|K_\theta(t,\tau,\omega)\cdot\overline{Z}^1(t,\tau)\right|^2d\tau\right|\mathcal{F}_s\right] \\
        & \qquad\qquad
        +\cdots+2\sup\limits_{T-\delta\leq t\leq s\leq T}\mathbb{E}\left[\left.\int^T_s\left|\widetilde{g}_{ZZ}(t,\tau,0,0,0,0)\right|^2d\tau\right|\mathcal{F}_s\right], 
    \end{split} 
\end{equation}
where the omitted three terms correspond to the other three arguments in \( \widetilde{g}_{ZZ} \). Note that we are studying this mapping within a closed ball in \( M[T-\delta,T] \). Therefore, to squeeze \(\delta\)-terms out of \eqref{NuniformE1} and \eqref{NuniformE2} to achieve a contraction, \( K_\theta \) and \( \widetilde{g}_{ZZ} \) need to satisfy (A2') for \( p = 4 \). Furthermore, with the help of (A2''), one can use the arguments similar to Step 3 in Theorem \ref{GeneralWellp} to extend the local solution established under (A2') to any arbitrarily large time horizon. As a side note, (A3) could be excluded from Theorem \ref{ExtendedWellp} as (A2') or (A2'') covers it.
\end{proof}

\begin{remark}[Locally Lipschitz BSVIEs] \label{RemkLocallyL}
    In Step 3 of the proof of Theorem \ref{GeneralWellp}, we showed that the norms of the global solutions are bounded by a constant depending only on the system parameters, which implies that the Lipschitz conditions need only hold within a sufficiently large bounded region in \(\mathbb{R}^{2k} \times \mathbb{R}^{2(k \times n)}\) rather than globally. In other words, the Lipschitz coefficients are non-uniform in both \(\omega \in \Omega\) and \((y, z, \overline{y}, \overline{z})\). Consequently, we can relax the generator conditions to locally Lipschitz (uniform or non-uniform). The BSVIE \eqref{GeneralCase} remains well-posed under these weaker conditions. Moreover, combining (A4) and (A5) with these locally non-uniform Lipschitz conditions ensures continuity of the solution in the \(s\)-direction, which again aids in designing numerical schemes and justifies the intuitive definition of diagonal processes.
\end{remark}

By Theorem \ref{ExtendedWellp}, the stochastic Lipschitz BSVIE \eqref{NonuLBSVIE example} is well-posed over any arbitrarily large time interval. Furthermore, Theorem \ref{ExtendedWellp} allow us to explore mean-variance problems in more stochastic-volatility models within random investment markets; see Section \ref{Sec: TICAPP}.

%--------------------------------------------------------------------------------
%--------------------------------------------------------------------------------

\section{Markovian BSVIEs and PDEs} \label{sec:MarkovianBSVIEs}
In this section, we are focused on a special class of BSVIEs, namely Markovian BSVIEs, where the randomness in both the generator and the terminal datum solely comes from (the solution of) a forward SDE (FSDE). We extend the well-known Feynman--Kac formula, showing that the unique adapted solution of the FBSVIE \eqref{Markovian FBSVIE} enables us to offer a probabilistic interpretation for the classical solutions to a class of (nonlocal) parabolic PDEs \eqref{nPDE}.

Let us consider the Markovian FBSVIE: 
\begin{equation} \label{Markovian FBSVIE}
    \begin{cases}
        \begin{aligned}
            dX^{t,x}(s) & ~=~ b(s,X^{t,x}(s))ds+\sigma(s)dB(s), \\

            dY^{t,x}(t,s) & ~=~ g(t,s,X^{t,x}(s),Y^{t,x}(t,s),Z^{t,x}(t,s),Y^{t,x}(s,s),Z^{t,x}(s,s))ds \\

            & \qquad +Z^{t,x}(t,s)dB(s), \\

            X^{t,x}(t) & ~=~ x, \quad Y^{t,x}(t,T) ~=~ h(t,X^{t,x}(T)), \quad 0\leq t \leq s\leq T, \quad x\in\mathbb{R}^d,
        \end{aligned} 
    \end{cases}
\end{equation} 
where the coefficients $b:[0,T]\times\mathbb{R}^d\to\mathbb{R}^d$, $\sigma:[0,T]\to\mathbb{R}^{d\times n}$, $g:\Delta[0,T]\times\mathbb{R}^d\times\mathbb{R}^{2k}\times\mathbb{R}^{2(k\times n)}\to\mathbb{R}^k$, and $h:[0,T]\times\mathbb{R}^d\to\mathbb{R}^k$ are all deterministic functions. Compared to the earlier general BSVIE \eqref{GeneralCase}, where the generator and terminal datum depend on the trajectory $\omega\in\Omega$, the randomness of the Markovian BSVIE \eqref{Markovian FBSVIE} is fully captured through \( X^{t,x} \). In the Markovian setting, we impose the following requirements on the deterministic functions $b$, $\sigma$, $g$, and $h$ of \eqref{Markovian FBSVIE}.
\begin{enumerate}[start=0,label={\upshape(\bfseries B\arabic*).}]
    \item $b(s,x)$ is a $C^{1}_b$-function for all $s\in[0,T]$ and $\sigma(s)$ is bounded and satisfies the uniform ellipticity condition: $(\sigma\sigma^\top)(s)\geq \delta I$ for some $\delta>0$. 
    \item $h(t,x)\in C^{1,1}_b([0,T]\times\mathbb{R}^d;\mathbb{R}^k)$. 
    \item $g(t,s,x,y,z,\overline{y},\overline{z})\in C^{1,1,1,3,3,3,3}_b(\Delta[0,T]\times\mathbb{R}^d\times\mathbb{R}^{2k}\times\mathbb{R}^{2(k\times n)};\mathbb{R}^k)$.
\end{enumerate}

As we will show, (B0)–(B2) not only ensure the global well-posedness of \eqref{Markovian FBSVIE} in \(\Delta[0,T]\), but also guarantee that the associated PDE \eqref{nPDE} has a unique, smooth classical solution in \(\Delta[0,T] \times \mathbb{R}^d\), being consistent with the results in the related PDE literature \cite{Wang2021, lei2023nonlocal, LeiSDG, lei2023well}. This provides a soild linkage between PDEs and SDEs. Moreover, by requiring the Malliavin derivative of the terminal datum \(Y^{t,x}(t,T)\) to be bounded, we also ensure the boundedness of \(D_\theta X^{t,x}(T)\), which in turn requires the diffusion term \(\sigma\) of \(X^{t,x}\) to be independent of \(X^{t,x}\), as shown in Lemma \ref{BoundedX} below. 

Next, we define \(\nabla (X^{t,x}, Y^{t,x}, Z^{t,x})(t,s) := (\nabla X^{t,x}(s), \nabla Y^{t,x}(t,s), \nabla Z^{t,x}(t,s))\), which denotes the matrix of first-order partial derivatives of \((X^{t,x}(s), Y^{t,x}(t,s), Z^{t,x}(t,s))\) with respect to the initial condition \(x\) of \(X^{t,x}(s)\). We then have the following result:
\begin{lemma} \label{BoundedX}
    Under (B0), for any $(t,s)\in[0,T]\times\mathbb{R}^d$, the forward SDE of \eqref{Markovian FBSVIE} admits a unique adapted solution $X^{t,x}(s)$ with bounded $D_\theta X^{t,x}(s)$ and $\nabla X^{t,x}(s)$. Moreover, for $0\leq t\leq \theta\leq s\leq T$, 
    \begin{equation} \label{LinkX}
        D_\theta X^{t,x}(s)=\nabla X^{t,x}(s)(\nabla X^{t,x}(\theta))^{-1}\sigma(\theta).
    \end{equation}
\end{lemma}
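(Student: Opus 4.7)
The plan is to exploit the crucial structural simplification in (B0) that $\sigma(s)$ depends only on $s$ and not on $x$. This makes the spatial derivative and Malliavin derivative equations pathwise linear ODEs (with random coefficients) rather than genuine SDEs, and it is what enables the uniform bounds.

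First I would invoke classical SDE theory: under (B0), $b(\cdot,\cdot)$ is globally Lipschitz in $x$ (uniformly in $s$) with linear growth, and $\sigma(\cdot)$ is bounded, so the forward equation admits a unique strong $\mathbb{F}$-adapted solution $X^{t,x}(s)$ on $[t,T]$. Next, I would differentiate formally with respect to the initial datum $x$: the process $\nabla X^{t,x}(s)$ satisfies
\begin{equation*}
\nabla X^{t,x}(s) \;=\; I \;+\; \int_t^s \nabla b(\tau, X^{t,x}(\tau))\, \nabla X^{t,x}(\tau)\, d\tau,
\end{equation*}
with \emph{no} stochastic integral because $\sigma$ is $x$-independent. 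Since $\nabla b$ is bounded by some $L>0$ (from $b\in C^1_b$), Gronwall's inequality gives $|\nabla X^{t,x}(s)|\le e^{L(s-t)}\le e^{LT}$ pathwise. The same linear ODE, viewed as a matrix equation, has the explicit representation $\nabla X^{t,x}(s) = \exp\!\bigl(\int_t^s \nabla b(\tau,X^{t,x}(\tau))\,d\tau\bigr)$ when $\nabla b$ commutes, but in general the fundamental matrix $\Phi(s,\theta):=\nabla X^{t,x}(s)(\nabla X^{t,x}(\theta))^{-1}$ is well-defined, uniformly bounded, and invertible, since linear ODEs with bounded coefficients are nondegenerate.

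For Malliavin differentiability, I would apply the standard Malliavin chain rule for SDEs (see \cite{nualart2006malliavin}): for $t\le\theta\le s\le T$,
\begin{equation*}
D_\theta X^{t,x}(s) \;=\; \sigma(\theta) \;+\; \int_\theta^s \nabla b(\tau, X^{t,x}(\tau))\, D_\theta X^{t,x}(\tau)\, d\tau,
\end{equation*}
with $D_\theta X^{t,x}(s)=0$ for $\theta>s$. Again there is no stochastic integral because the diffusion coefficient does not depend on $x$; the "initial value" at $\tau=\theta$ is simply $\sigma(\theta)$, and Gronwall with the uniform bound on $\nabla b$ and the boundedness of $\sigma$ yields $|D_\theta X^{t,x}(s)|\le \|\sigma\|_\infty e^{L(s-\theta)}$, uniformly in $(\theta,s,\omega)$.

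Finally, to obtain the representation formula, I would observe that both sides of \eqref{LinkX} satisfy the \emph{same} linear ODE on $[\theta,T]$: the right-hand side $\Phi(s,\theta)\sigma(\theta)$ satisfies $dU(s) = \nabla b(s,X^{t,x}(s))\,U(s)\,ds$ with $U(\theta)=\sigma(\theta)$, which is precisely the equation and initial condition for $D_\theta X^{t,x}(s)$. Uniqueness of solutions to linear ODEs then forces the equality. No serious obstacle is expected here since the lemma is a direct consequence of $\sigma$ being spatially constant; the only subtlety is being careful with the $\theta$-dependent initial condition for the Malliavin derivative and justifying the invertibility of $\nabla X^{t,x}(\theta)$ via its representation as a fundamental matrix of a linear system with bounded drift.
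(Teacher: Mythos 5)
Your proposal is correct and follows essentially the same route as the paper: existence and uniqueness of $X^{t,x}$ from classical SDE theory, then recognizing that $\nabla X^{t,x}$ and $D_\theta X^{t,x}$ satisfy pathwise linear ODEs (with no stochastic integral, since $\sigma$ is $x$-independent), deducing boundedness, and obtaining \eqref{LinkX} from uniqueness of the linear system. If anything, your use of the fundamental matrix $\Phi(s,\theta)$ is slightly more careful than the paper's explicit matrix-exponential formula, which is literally valid only when the matrices $b_X(\tau,X^{t,x}(\tau))$ commute across $\tau$ (e.g., in dimension one).
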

\begin{proof}
    Under (B0), by the classical SDE theory \cite{Yong1999}, the forward SDE of \eqref{Markovian FBSVIE} admits a unique adapted solution $X^{t,x}(s)$. Moreover, the (random) ODEs satisfied by $\nabla X^{t,x}(s)$ and $D_\theta X^{t,x}(s)$ (given in \eqref{FBSDEx} and \eqref{FBSDEtheta} below) admit a unique solution given by an explicit representation 
    \begin{equation*}
        \begin{cases}
            \begin{aligned}
            \nabla X^{t,x}(s) & ~=~ I\cdot\exp\left\{\int\nolimits^s_tb_X(\tau,X^{t,x}(\tau))d\tau\right\}, \\

            \\[-15pt]

            D_\theta X^{t,x}(s) & ~=~ \sigma(\theta)\cdot\exp\left\{\int\nolimits^s_\theta b_X(\tau,X^{t,x}(\tau))d\tau\right\},
            \end{aligned}
        \end{cases}
    \end{equation*}
    which directly indicates the boundedness of $D_\theta X^{t,x}(s)$ and $\nabla X^{t,x}(s)$, while the \eqref{LinkX} connects between them. 
\end{proof}

With the help of Lemma \ref{BoundedX}, we can show that the well-posedness of solutions of the FBSVIE \eqref{Markovian FBSVIE} and the relationship between $D_\theta (Y^{t,x},Z^{t,x}))$ and $\nabla (Y^{t,x},Z^{t,x}))$.  
% Using Lemma \ref{BoundedX}, the well-posedness result of the general BSVIE \eqref{General BSVIE} in Theorem 1 directly ensures that the Markovian FBSVIE \eqref{Markovian FBSVIE} is also well-posed under conditions (B0)-(B2).
\begin{theorem} \label{MarkWellp}
    Under assumptions (B0)-(B2), the Markovian FBSVIE \eqref{Markovian FBSVIE} admits a unique solution $(X^{t,x},Y^{t,x},Z^{t,x})(t,s)$. Moreover, for $(s,\tau)\in\Delta[t,T]$ and $\theta\in[t,\tau]$, 
    \begin{equation} \label{LinkYZ}
    \begin{cases}
        % \displaystyle{D_\theta X^{t,x}(s)=\nabla X^{t,x}(s)(\nabla X^{t,x}(\theta))^{-1}\sigma(\theta),\quad s\in [t,T], \quad \theta\in[t,s], } \\

        % ~ \\
        \begin{aligned}
        D_\theta Y^{t,x}(s,\tau) & ~=~ \nabla Y^{t,x}(s,\tau)(\nabla X^{t,x}(\theta))^{-1}\sigma(\theta), \\

        D_\theta Z^{t,x}(s,\tau) & ~=~ \nabla Z^{t,x}(s,\tau)(\nabla X^{t,x}(\theta))^{-1}\sigma(\theta).
        \end{aligned}
    \end{cases}
\end{equation} 
In particular, $\{D_\tau Y^{t,x}(s,\tau)\}_{0\leq t\leq s\leq \tau\leq T}$ provides a version of $\{Z^{t,x}(s,\tau)\}_{0\leq t\leq s\leq \tau\leq T}$, i.e. 
\begin{equation} \label{LinkY}
    Z^{t,x}(s,\tau) ~=~ \nabla Y^{t,x}(s,\tau)(\nabla X^{t,x}(\tau))^{-1}\sigma(\tau),\quad (s,\tau)\in\Delta[t,T], \quad a.s. 
\end{equation} 
which is continuous in $\tau$ almost surely.
\end{theorem}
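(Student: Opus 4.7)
The plan is to first reduce the well-posedness claim to the general theory of Section~\ref{sec:Well}, and then to derive the link formulas \eqref{LinkYZ}--\eqref{LinkY} by matching two linear BSVIEs via uniqueness. For the first claim, I interpret \eqref{Markovian FBSVIE} as a special case of \eqref{GeneralCase} by setting $\xi(t) = h(t, X^{t,x}(T))$ and treating $\widetilde{g}(t,s,\omega,y,z,\overline{y},\overline{z}) := g(t,s,X^{t,x}(s,\omega),y,z,\overline{y},\overline{z})$ as the random generator. Under (B0), Lemma~\ref{BoundedX} guarantees that $\nabla X^{t,x}$ and $D_\theta X^{t,x}$ are bounded and jointly measurable; combined with the $C^1_b$ regularity of $h$ in (B1) and the $C^{1,1,1,3,3,3,3}_b$ regularity of $g$ in (B2), the chain rule and the classical Malliavin differentiability results for SDEs (\cite{nualart2006malliavin}) imply that $\xi(t)\in\Lambda^\infty_{\mathcal{F}_T}$ and that $\widetilde{g}$ fulfills all the Lipschitz, boundedness, and H\"older-type regularity conditions required by (A1)--(A5). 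Applying Theorem~\ref{ContinuityZ} then delivers a unique adapted solution $(Y^{t,x},Z^{t,x})\in\mathcal{M}_c[t,T]$ with $s\mapsto(Y^{t,x},Z^{t,x})(s,\tau)$ almost surely continuous.

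For the link formulas, I would next derive two linear BSVIE systems. Differentiating \eqref{Markovian FBSVIE} in the initial condition $x$ gives, via the chain rule, a linear BSVIE for $\nabla(Y^{t,x},Z^{t,x})(s,\tau)$ whose coefficients involve the derivatives $g_x,g_y,g_z,g_{\overline{y}},g_{\overline{z}}$ evaluated along $(X^{t,x},Y^{t,x},Z^{t,x})$, and whose terminal datum is $h_x(s,X^{t,x}(T))\nabla X^{t,x}(T)$. Applying the Malliavin operator $D_\theta$ to \eqref{Markovian FBSVIE} (Proposition~5.3 of \cite{Karoui1997}) yields a linear BSVIE for $D_\theta(Y^{t,x},Z^{t,x})(s,\tau)$ with the same structural coefficients but driven by $D_\theta X^{t,x}$ in the $x$-slot and with terminal datum $h_x(s,X^{t,x}(T))D_\theta X^{t,x}(T)$.

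The key matching step uses the representation \eqref{LinkX}. Define $\widehat{Y}(s,\tau) := \nabla Y^{t,x}(s,\tau)(\nabla X^{t,x}(\theta))^{-1}\sigma(\theta)$ and $\widehat{Z}(s,\tau) := \nabla Z^{t,x}(s,\tau)(\nabla X^{t,x}(\theta))^{-1}\sigma(\theta)$ for fixed $\theta\in[t,\tau]$; since $(\nabla X^{t,x}(\theta))^{-1}\sigma(\theta)$ is $\mathcal{F}_\theta$-measurable and hence a constant coefficient for the dynamics on $[\theta,T]$, the pair $(\widehat{Y},\widehat{Z})$ satisfies exactly the same linear BSVIE as $D_\theta(Y^{t,x},Z^{t,x})(s,\tau)$, and the terminal conditions agree because $h_x(s,X^{t,x}(T))\nabla X^{t,x}(T)(\nabla X^{t,x}(\theta))^{-1}\sigma(\theta)=h_x(s,X^{t,x}(T))D_\theta X^{t,x}(T)$ by \eqref{LinkX}. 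Uniqueness of the linear BSVIE (which falls within the scope of Theorem~\ref{GeneralWellp} or the nonlocal-linear BSDE results in \cite{lei2024MV}) forces $\widehat{Y}=D_\theta Y^{t,x}$ and $\widehat{Z}=D_\theta Z^{t,x}$, proving \eqref{LinkYZ}. Setting $\theta=\tau$ and invoking Corollary~\ref{Cor:Zrep} ($Z^{t,x}(s,\tau)=D_\tau Y^{t,x}(s,\tau)$) yields \eqref{LinkY}; almost sure continuity in $\tau$ follows from the $\mathcal{M}_c$-regularity of $\nabla Y^{t,x}$, the continuity of $\tau\mapsto(\nabla X^{t,x}(\tau))^{-1}\sigma(\tau)$ guaranteed by (B0), and Kolmogorov's theorem.

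The main obstacle is the first step, namely verifying that the Markovian composition inherits the full package (A1)--(A5) required for $\mathcal{M}_c$-regularity: the bounded third-order derivatives of $g$ in (B2) together with the uniform ellipticity and $C^1_b$-condition in (B0) are exactly what is needed to control, through the chain rule, the mixed second-order derivatives $g_{tz},g_{yz},g_{zz},\ldots$ of the composed generator as well as the Malliavin-type H\"older estimate in (A5). The uniqueness-matching argument in Step 3 is then essentially automatic once both BSVIEs have been written down.
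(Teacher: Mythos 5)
Your proposal follows essentially the same route as the paper: reduce the well-posedness claim to the general theory of Section \ref{sec:Well} via Lemma \ref{BoundedX}, write down the linear BSVIEs satisfied by $\nabla(Y^{t,x},Z^{t,x})$ and $D_\theta(Y^{t,x},Z^{t,x})$, and identify them through uniqueness of the linear system together with the representation \eqref{LinkX}, then set $\theta=\tau$ and invoke $Z^{t,x}=D_\tau Y^{t,x}$. The only real deviation is in the continuity step: the paper verifies only (A1)--(A3), applies Theorem \ref{GeneralWellp}, and then reads off the almost-sure continuity of $\tau\mapsto Z^{t,x}(s,\tau)$ directly from the explicit representation \eqref{LinkY} and the continuity of $\nabla Y^{t,x}$, $\nabla X^{t,x}$, and $\sigma$ (as emphasized in the remark following the theorem), whereas you route it through Theorem \ref{ContinuityZ}, which would require actually verifying the H\"older conditions (A4)--(A5) for the composed generator and an appeal to Kolmogorov's theorem --- an unnecessary, and in your write-up unverified, detour in the Markovian setting.
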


\begin{proof}
    By Lemma \ref{BoundedX}, we can easily verify that the generator and terminal data of \eqref{Markovian FBSVIE} satisfy the assumptions (A1)-(A3). Consequently, Theorem \ref{GeneralWellp} implies that the Markovian FBSVIE \eqref{Markovian FBSVIE} admits a unique adapted solution \( (X^{t,x}(s), Y^{t,x}(t,s), Z^{t,x}(t,s)) \) in $\Delta[0,T]$. 
    
    Next, let us examine the relationship \eqref{LinkYZ} between the $x$-directional first-order partial derivatives \( \nabla (Y^{t,x}, Z^{t,x})(t,s) \) and the Malliavin derivatives \( D_\theta (Y^{t,x}, Z^{t,x})(t,s) \). Note that the dynamics of $(\nabla X^{t,x},\nabla Y^{t,x},\nabla Z^{t,x})(t,s)$ satisfies 
    \begin{equation} \label{FBSDEx}
    \begin{cases}
        \begin{aligned}
            d\nabla X^{t,x}(s) & ~=~ b_X(s,X^{t,x}(s))\cdot\nabla X^{t,x}(s)ds, \\

            d\nabla Y^{t,x}(t,s) & ~=~ \Big[g_X(t,s,X^{t,x}(s),Y^{t,x}(t,s),Z^{t,x}(t,s),Y^{t,x}(s,s),Z^{t,x}(s,s))\cdot\nabla X^{t,x}(s) \\
            
            & \qquad +g_Y\cdot\nabla Y^{t,x}(t,s)+g_Z\cdot\nabla Z^{t,x}(t,s)+g_{\overline{Y}}\cdot\nabla Y^{t,x}(s,s)+g_{\overline{Z}}\cdot\nabla Z^{t,x}(s,s)\Big]ds \\

            & \qquad +\nabla Z^{t,x}(t,s)dB(s), \\

            \nabla X^{t,x}(t) & ~=~ I, \quad \nabla Y^{t,x}(t,T) ~=~ h_X(t,X^{t,x}(T))\cdot\nabla X^{t,x}(T), \quad 0\leq t \leq s\leq T
        \end{aligned} 
    \end{cases}
\end{equation} 
    
%     \begin{equation} \label{FBSDEx}
%     \begin{cases}
%         d\nabla X^{t,x}(s) & ~=~ b_X(s,X^{t,x}(s))\cdot\nabla X^{t,x}(s)ds, \\

%         ~ \\
        
%         d\nabla Y^{t,x}(t,s) & ~=~ \Big[g_X(t,s,X^{t,x}(s),Y^{t,x}(t,s),Z^{t,x}(t,s),Y^{t,x}(s,s),Z^{t,x}(s,s))\cdot\nabla X^{t,x}(s) \\

%         ~ \\
        
%         \qquad\qquad\qquad\qquad
%         & +g_Y\cdot\nabla Y^{t,x}(t,s)+g_Z\cdot\nabla Z^{t,x}(t,s)+g_{\overline{Y}}\cdot\nabla Y^{t,x}(s,s)+g_{\overline{Z}}\cdot\nabla Z^{t,x}(s,s)\Big]ds+\nabla Z^{t,x}(t,s)dB(s), \\

%         ~ \\
        
%        \nabla X^{t,x}(t) & ~=~ I, \quad \nabla Y^{t,x}(t,T) ~=~ h_X(t,X^{t,x}(T))\cdot\nabla X^{t,x}(T), \quad 0\leq t \leq s\leq T.  
%     \end{cases}
% \end{equation} 

% \begin{equation} \label{FBSDEx}
%     \begin{cases}
%         \displaystyle{d\nabla X^{t,x}(s)=b_X(s,X^{t,x}(s))\cdot\nabla X^{t,x}(s)ds,} \\

%         ~ \\
        
%         \displaystyle{d\nabla Y^{t,x}(t,s)=\Big[g_X(t,s,X^{t,x}(s),Y^{t,x}(t,s),Z^{t,x}(t,s),Y^{t,x}(s,s),Z^{t,x}(s,s))\cdot\nabla X^{t,x}(s)} \\

%         ~ \\
        
%         \qquad\qquad\qquad\qquad
%         \displaystyle{+g_Y\cdot\nabla Y^{t,x}(t,s)+g_Z\cdot\nabla Z^{t,x}(t,s)+g_{\overline{Y}}\cdot\nabla Y^{t,x}(s,s)+g_{\overline{Z}}\cdot\nabla Z^{t,x}(s,s)\Big]ds+\nabla Z^{t,x}(t,s)dB(s),} \\

%         ~ \\
        
%        \nabla X^{t,x}(t)=I, \quad \nabla Y^{t,x}(t,T)=h_X(t,X^{t,x}(T))\cdot\nabla X^{t,x}(T), \quad 0\leq t \leq s\leq T.  
%     \end{cases}
% \end{equation} 
and the process $(D_\theta X^{t,x}, D_\theta Y^{t,x},D_\theta Z^{t,x})(t,s)$ of their Malliavin derivatives is governed by $D_\theta Y^{t,x}(t,s)=D_\theta Z^{t,x}(t,s)=0$ for $0\leq \theta\leq t\leq T$ or $s< \theta\leq T$, and 
\begin{equation} \label{FBSDEtheta}
    \begin{cases}
        \begin{aligned}
        dD_\theta X^{t,x}(s) & ~=~ b_X(s,X^{t,x}(s))\cdot D_\theta X^{t,x}(s)ds, \\

        dD_\theta Y^{t,x}(t,s) & ~=~ \Big[g_X(t,s,X^{t,x}(s),Y^{t,x}(t,s),Z^{t,x}(t,s),Y^{t,x}(s,s),Z^{t,x}(s,s))\cdot D_\theta X^{t,x}(s) \\

        & \qquad +g_Y\cdot D_\theta Y^{t,x}(t,s)+g_Z\cdot D_\theta Z^{t,x}(t,s)+g_{\overline{Y}}\cdot D_\theta Y^{t,x}(s,s)+g_{\overline{Z}}\cdot D_\theta Z^{t,x}(s,s)\Big]ds \\

        & \qquad +D_\theta Z^{t,x}(t,s)dB(s), \\

        D_\theta X^{t,x}(\theta) & ~=~ \sigma(\theta), \quad D_\theta Y^{t,x}(t,T) ~=~ h_X(t,X^{t,x}(T))\cdot D_\theta X^{t,x}(T), \quad 0\leq t \leq \theta \leq s\leq T. 
        \end{aligned}  
    \end{cases}
\end{equation}
One can find that both solutions of the two linear BSVIEs \eqref{FBSDEx} and \eqref{FBSDEtheta} are unique. Indeed, if we consider the BSVIE satisfied by the difference $(\Delta Y^{t,x},\Delta Z^{t,x})$ between two possible solutions of \eqref{FBSDEx} and \eqref{FBSDEtheta}, one obtains 
\begin{equation*}
    \begin{cases}
        \begin{aligned}
            d\Delta Y^{t,x}(t,s) & ~=~ \Big[g_Y\cdot \Delta Y^{t,x}(t,s)+g_Z\cdot \Delta Z^{t,x}(t,s)+g_{\overline{Y}}\cdot \Delta Y^{t,x}(s,s) \\

            & \qquad +g_{\overline{Z}}\cdot \Delta Z^{t,x}(s,s)\Big]ds +\Delta Z^{t,x}(t,s)dB(s), \\

            \Delta Y^{t,x}(t,T) & ~=~ 0,
        \end{aligned}   
    \end{cases}
\end{equation*} 
which is well-posed and admits a unique zero solution $\|(\Delta Y^{t,x},\Delta Z^{t,x})\|^2_{\mathcal{M}[0,T]}=0$ by Theorem \ref{ExtendedWellp}. Consequently, the relationship of \eqref{LinkY} between $D_\theta (Y^{t,x},Z^{t,x}))$ and $\nabla (Y^{t,x},Z^{t,x}))$ is an immediate consequence of the uniqueness of the solution to equations \eqref{FBSDEx}-\eqref{FBSDEtheta} and the representation \eqref{LinkX}. Finally, by noting that the martingale integrand components \(Z^{t,x}\) can be expressed in terms of the trace of the Malliavin derivative of \(Y^{t,x}\), its continuity follows from that of $\nabla Y^{t,x}$, $\nabla X^{t,x}$, and $\sigma$. 
\end{proof}

\begin{remark}
    In the Markovian case, the continuity of \( Z \) at \( \tau \) is straightforward, as it directly follows from the continuity of \( Y \), \( X \), and \( \sigma \) at \( \tau \). This contrasts with the approach used in Theorem \ref{ContinuityZ}, where we applied Kolmogorov's continuity theorem along with conditions (A4)–(A5) to establish continuity in the non-Markovian setting.
\end{remark}

Next, we will show that the adapted solution \((Y^{t,x}(t,s), Z^{t,x}(t,s))\) of \eqref{Markovian FBSVIE} can be expressed in terms of a function \( u(t, s, x) \) and \( X^{t,x}(s) \), where \( u \) satisfies an associated (nonlocal) PDE \eqref{nPDE}. The function \( u \) represents the backward component \( Y^{t,x}(t,s) \) at time \( t \), given the state \( x \), while \( u_x \) links to \( Z \), reflecting the sensitivity of the solution with respect to \( x \). This interpretation partially explains why in the dynamically optimal investment policy \eqref{Investpolicy} in an incomplete market, the intertemporal hedging demand involves the martingale integrand process (to hedge fluctuations in the state variable); see Section \ref{Sec: TICAPP}.

Let us introduce the BSVIE-associated semi-linear PDE of the following form 
\begin{equation} \label{nPDE}
    \begin{cases}
        \begin{aligned}
            u_s(t,s,x) & + \frac{1}{2}\mathrm{tr}\left\{\left(\sigma\sigma^\top\right)(s)\cdot u_{xx}(t,s,x)\right\}+\big\langle b(s,x),u_x(t,s,x)\big\rangle  ~ \\

            & -g\big(t,s,x,u(t,s,x),u_x(t,s,x)\sigma(s),u(s,s,x),u_x(s,s,x)\sigma(s)\big) = 0,  \\

            u(t,T,x) & ~=~ h(t,x), \quad 0\leq t\leq s\leq T,\quad x\in\mathbb{R}^d, 
        \end{aligned}   
    \end{cases}
\end{equation} 
where the mapping (non-homogeneous term) $-g$ could be nonlinear with respect to all its arguments, and both $s$ and $x$ are dynamical variables while $t$ should be considered as an external time parameter. The nonlocality comes from the dependence on the unknown function $u$ and its derivatives evaluated at not only the local point $(t,s,x)$ but also at the diagonal line of the time domain $(s,s,x)$. A more specific and relevant application of \eqref{nPDE} is the equilibrium HJB equation \eqref{HJBSys} in Section \ref{Sec: TICAPP} that characterizes the equilibrium solution to a TIC stochastic control problem. Intuitively, in the absence of diagonal terms in the PDE, classical BSDE theory tells us that the solution to such a PDE is associated with a family of BSDEs parameterized by \( t \). It is interesting to explore cases where diagonal components are present on both the PDE \eqref{nPDE} and SDE \eqref{Markovian FBSVIE} sides.

\begin{theorem}
Under assumptions (B0)-(B2), the nonlocal PDE \eqref{nPDE} admits a unique $C^{1,1,2}$-regular classical solution in $\Delta[0,T]\times\mathbb{R}^d$. Moreover, the unique adapted solution of FBSVIE \eqref{Markovian FBSVIE} admits an explicit representation 
\begin{equation} \label{FKformula}
    \begin{cases}
        \begin{aligned}
            Y^{t,x}(s,\tau) & ~=~ u(s,\tau,X^{t,x}(\tau)), \\

            Z^{t,x}(s,\tau) & ~=~ u_x(s,\tau,X^{t,x}(\tau))\sigma(\tau)
        \end{aligned}  
    \end{cases}
\end{equation}
for $(s,\tau)\in\Delta[t,T]$, which in turn provides a probabilistic interpretation for the solution $u$ and its gredient $u_x$ of \eqref{nPDE}. 
\end{theorem}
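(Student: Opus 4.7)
The plan is to construct the PDE solution directly from the FBSVIE and then verify it via It\^o's formula. First, I would define the candidate $u(t,s,x):=Y^{t,x}(t,s)$. Since $X^{t,x}(t)=x$ is deterministic and $Y^{t,x}(t,t)$ is $\mathcal{F}_t$-measurable, uniqueness of the adapted solution from Theorem \ref{MarkWellp}, together with the Markov property of $X^{t,x}$, forces $u$ to be a deterministic function of $(t,s,x)$. More generally, the flow property of \eqref{Markovian FBSVIE} yields the pathwise identity $Y^{t,x}(s,\tau)=u(s,\tau,X^{t,x}(\tau))$ for all $(s,\tau)\in\Delta[t,T]$, which is exactly the first line of \eqref{FKformula}.

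Next, I would establish the $C^{1,1,2}$-regularity of $u$. Differentiability in $x$ is obtained by differentiating the FBSVIE with respect to the initial data $x$: the variational equations \eqref{FBSDEx} constitute a linear \emph{nonlocal} BSVIE for $(\nabla X^{t,x},\nabla Y^{t,x},\nabla Z^{t,x})$ with bounded coefficients under (B0)--(B2), whose well-posedness is guaranteed by Theorem \ref{ExtendedWellp}. This yields $u_x$, and a second layer of variational analysis (studying $\nabla^2 Y^{t,x}$ through yet another linear nonlocal BSVIE derived under (B2)) produces $u_{xx}$; continuity in $(t,s)$ follows from stability of the FBSVIE in the parameter $t$ together with the continuity of $s\mapsto(Y^{t,x},Z^{t,x})(t,s)$ established through the Malliavin approach. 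With this regularity secured, apply It\^o's formula to $u(t,s,X^{t,x}(s))$: matching the diffusion part with $Z^{t,x}(t,s)dB(s)$ gives
\begin{equation*}
Z^{t,x}(t,s)=u_x(t,s,X^{t,x}(s))\sigma(s),
\end{equation*}
while matching the drift part and substituting $Y^{t,x}(s,s)=u(s,s,X^{t,x}(s))$ and $Z^{t,x}(s,s)=u_x(s,s,X^{t,x}(s))\sigma(s)$ produces the PDE \eqref{nPDE} evaluated along the trajectory $X^{t,x}(\cdot)$. Because uniform ellipticity in (B0) guarantees the law of $X^{t,x}(s)$ has full support on $\mathbb{R}^d$ for each $s>t$, and continuity in $x$ extends to $s=t$, the PDE must hold pointwise on $\Delta[0,T]\times\mathbb{R}^d$.

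For uniqueness of the classical solution, I would argue in reverse. Given any $C^{1,1,2}$-solution $\tilde u$, applying It\^o's formula to $\tilde u(t,s,X^{t,x}(s))$ and using the PDE \eqref{nPDE} that $\tilde u$ satisfies shows that $\bigl(\tilde u(t,s,X^{t,x}(s)),\tilde u_x(t,s,X^{t,x}(s))\sigma(s)\bigr)$ is an adapted solution of \eqref{Markovian FBSVIE}. Theorem \ref{MarkWellp} then forces this pair to coincide with $(Y^{t,x},Z^{t,x})(t,s)$, so evaluating at $s=t$ gives $\tilde u(t,t,x)=u(t,t,x)$; once the diagonal $u(s,s,x)$ is pinned down, the PDE \eqref{nPDE} reduces, for each fixed $t$, to a classical semi-linear parabolic equation in $(s,x)$ with a known source term, whose uniqueness follows from standard parabolic theory and agrees with \cite{Wang2021,lei2023nonlocal,LeiSDG,lei2023well}. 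The chief obstacle in this program is the $C^{1,1,2}$-regularity step: because the first- and second-order variations of \eqref{Markovian FBSVIE} themselves carry diagonal terms $\nabla(Y,Z)(s,s)$ and $\nabla^2(Y,Z)(s,s)$, one cannot import classical FBSDE regularity off the shelf and must instead invoke the nonlocal linear BSVIE well-posedness built on the $\mathcal M$- and $\Theta^{\textsc{BMO}}_{\mathbb{F}}$-spaces introduced in Section \ref{subsec:Norms}.
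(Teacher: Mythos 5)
Your route is genuinely different from the paper's: the paper does \emph{not} construct $u$ from $Y$; it simply quotes the existence and uniqueness of the $C^{1,1,2}$ classical solution of \eqref{nPDE} from the analytic PDE literature \cite{Wang2021, lei2023nonlocal, LeiSDG, lei2023well}, applies It\^{o}'s formula to $\tau\mapsto u(t,\tau,X^{t,x}(\tau))$ to check that $\big(u(s,\tau,X^{t,x}(\tau)),u_x(s,\tau,X^{t,x}(\tau))\sigma(\tau)\big)$ is an adapted solution of \eqref{Markovian FBSVIE}, and then invokes the uniqueness in Theorem \ref{MarkWellp} to obtain \eqref{FKformula}. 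Your plan of setting $u(t,s,x):=Y^{t,x}(t,s)$ and deriving the PDE from the FBSVIE is exactly the alternative the paper flags after the theorem as ``of interest'' in the spirit of \cite{Pardoux1992, Wang2020}, and its second half (the It\^{o} verification and the reverse uniqueness argument) is sound and essentially mirrors the paper's computation.

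The genuine gap is the $C^{1,1,2}$-regularity step, and it is not merely a matter of invoking the nonlocal linear BSVIE well-posedness. Your proposed ``second layer of variational analysis'' for $\nabla^2 Y^{t,x}$ requires differentiating the terminal datum $h(t,X^{t,x}(T))\cdot\nabla X^{t,x}(T)$ and the generator of \eqref{FBSDEx} once more in $x$, which produces $h_{xx}$ and $g_{xx}$. But (B1) only gives $h\in C^{1,1}_b$ (a single $x$-derivative) and (B2) only gives one $x$-derivative of $g$ (the third index in $C^{1,1,1,3,3,3,3}_b$), so the second variational BSVIE cannot even be written down under the stated hypotheses. The two spatial derivatives of $u$ must instead be gained from the parabolic smoothing of the uniformly elliptic operator in (B0) --- Schauder or analytic-semigroup estimates for the nonlocal PDE, or a Bismut--Elworthy-type integration by parts on the probabilistic side --- and this is precisely the content of the cited PDE results that the paper leans on and that your argument would need to reproduce. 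As written, your program stalls at $u_x$; to complete it you would either have to strengthen (B1)--(B2) to two bounded $x$-derivatives of $(h,g)$, or import the analytic regularity theory, at which point you are back to the paper's proof.
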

\begin{proof}
    The existence and uniqueness of the $C^{1,1,2}$-classical solution to the PDE \eqref{nPDE} in \(\Delta[0,T] \times \mathbb{R}^d\) is shown by earlier related literature \cite{Wang2021, lei2023nonlocal, LeiSDG, lei2023well}. For any fixed $(t,s)\in \Delta[0,T]$, we apply It\^{o}'s formula with the map $\tau\mapsto u(t,\tau,X^{t,x}(\tau))$ on $[s,T]$. First of all, it is clear that $u(t,T,X^{t,x}(T))=g(t,X^{t,x}(T))$. Then, one has  
\begin{equation*}
\begin{split}
     \begin{aligned}
    & \qquad du(t,\tau,X^{t,x}(\tau)) \\
    \\[-25pt]
    & ~=~ \Big[u_s(t,\tau,X^{t,x}(\tau))+\sum^k_{i=1}b_i(\tau,X^{t,x}(\tau))\cdot\frac{\partial u}{\partial x_i}(t,\tau,X^{t,x}(\tau)) \\
    \\[-30pt]
    & \qquad\qquad +\frac{1}{2}\sum^k_{i,j=1}\big(\sigma\sigma\big)_{ij}(\tau)\cdot\frac{\partial^2 u}{\partial x_i\partial x_j}(t,\tau,X^{t,x}(\tau))\Big]d\tau + u_x(t,\tau,X^{t,x}(\tau))\sigma(\tau)dB(\tau) \\
    \\[-20pt]
	& ~=~ g\big(t,\tau,X^{t,x}(\tau),u(t,\tau,X^{t,x}(\tau),u_x(t,\tau,X^{t,x}(\tau))\sigma(\tau), \\
    \\[-15pt]
    & \qquad\qquad 
    u(\tau,\tau,X^{t,x}(\tau),u_x(\tau,\tau,X^{t,x}(\tau))\sigma(\tau)\big)d\tau +u_x(t,\tau,X^{t,x}(\tau))\sigma(\tau) dB(\tau). \\
     \end{aligned}
\end{split}
\end{equation*}
Hence, the triple of processes $(X^{t,x}(\tau),u(s,\tau,X^{t,x}(\tau)),u_x(s,\tau,X^{t,x}(\tau))\sigma(\tau))$ is an adapted solution of the Markovian FBSVIE \eqref{Markovian FBSVIE}. By the uniqueness of the solution to \eqref{Markovian FBSVIE}, we must have \eqref{FKformula}. In particular, $u(t,s,x)=Y^{t,s}(t,s)$. 
\end{proof}

Our well-posedness results for \eqref{Markovian FBSVIE} do not require the generator to be linear in certain arguments, which significantly broadens the applicability of the nonlocal Feynman-Kac formula \eqref{FKformula}. Inspired by \cite{Pardoux1992, Wang2020}, it is of interest to demonstrate the regularity of \(Y\) and set \(u = Y\) to show that it satisfies the nonlocal PDE. The connection \eqref{FKformula} bridges PDEs and SDEs, offering a probabilistic interpretation of solutions to specific PDEs. Furthermore, it is expected to play a key role in developing deep-learning-based solvers for high-dimensional nonlocal PDEs, as demonstrated in \cite{Han2018}.

% Our well-posedness results of \eqref{Markovian FBSVIE} on the BSVIE side relax the restrictive assumptions required in previous studies on the nonlinear form of the generator or the non-homogeneous term in the PDE. Inspired by \cite{Pardoux1992,Wang2020}, it is interesting to demonstrate the regularity of \(Y\) and subsequently set \(u = Y\) to show that it indeed satisfies the nonlocal PDE. The Feynman-Kac formula \eqref{FKformula} provides a bridge between PDEs and SDEs, showing that the solution of a BSVIE can be clearly expressed in terms of the solutions to a nonlocal PDE and a forward SDE. This expression not only offers a probabilistic interpretation for the solutions of certain PDEs but also reveals two significant insights: (1) The Feynman-Kac formula \eqref{FKformula} clearly indicates the continuity of the Markovian FBSVIE solution in \((t, s)\). In the Markovian case, this obvious continuity allows for an intuitive definition of the challenging diagonal process \( Z(s, s) = Z(s, \tau)|_{\tau=s} \) and also supports the construction of a numerical scheme for BSDEs to approximate the process \( Z \). (2) With the well-posedness of BSVIEs established, we gain a solid foundation for representing nonlocal PDE solutions probabilistically. This representation is instrumental in developing deep neural network-based solvers for high-dimensional nonlocal PDEs. 

%--------------------------------------------------------------------------------
%--------------------------------------------------------------------------------

\section{Time-Inconsistent Stochastic Control Problems} \label{Sec: TICAPP}
In this section, we leverage our BSVIE results to study the continuous-time mean-variance (MV) portfolio selection problem for a sophisticated investor with time-varying risk aversion. As discussed in Section \ref{sec:intro}, finding a dynamically optimal (time-consistent) MV investment policy resembles an intrapersonal game, akin to studies on consumer behavior under time-inconsistent (TIC) preferences; see \cite{Peleg1973} and \cite{Harris2001}. Specifically, the investor views the control policy of her future selves as predetermined and aims to optimally react to them. Consequently, her control policy composes of a pure-strategy Nash equilibrium of this intrapersonal game. %Therefore, we start by treating the continuous-time control problem as an intrapersonal game with a time-varying objective, following \cite{Bjoerk2014,Bjoerk2017,bensoussan2013mean}.

Specifically, let us consider a TIC stochastic control problem with an objective at time $s$:
\begin{equation} \label{GeneralTIC}
    \begin{split}
        J(s,x;\bm{u}) & =\mathbb{E}_{s,x}\big[F(s,X^{\bm{u}}(T))\big] + G\big(s,\mathbb{E}_{s,x}[\psi(X^{\bm{u}}(T))]\big), \quad 0\leq s\leq T,  
    \end{split}
\end{equation}
where $\mathbb{E}_{s,x}[\cdot]$ is the conditional expectation provided by $X^{\bm{u}}(s)=x$ and the mapping $\bm{u}:[0,T]\times\mathbb{R}^d\to\mathbb{R}^m$ is an admissible control law in the sense that both the underlying dynamics $X^{\bm{u}}$ governed by a forward SDE 
\begin{equation} \label{Statedynamics}
dX^{\bm{u}}(\tau)=b\big(\tau,X^{\bm{u}}(\tau),\bm{u}(\tau,X^{\bm{u}}(\tau))\big)d\tau+\sigma\big(\tau,X^{\bm{u}}(\tau),\bm{u}(\tau,X^{\bm{u}}(\tau))\big)dB(\tau),
\end{equation}
and the TIC objective \eqref{GeneralTIC} are well-defined. Here, $F$, $G$, $b$, and $\sigma$ are all deterministic functions of suitable dimensions. The time inconsistency (TIC) in \eqref{GeneralTIC} arises from two main sources: (1) The cost functional \eqref{GeneralTIC} depends on the initial time reference \( s \) of the sub-problem, meaning that a decision-maker's preferences and decisions may change over time based on factors like remaining time to maturity; (2) The nonlinearity in the conditional expectation arguments of \( G \) violates the BPO.

%In a TIC setting, we do not try to find a time-consistent optimal strategy to maximize \eqref{GeneralTIC}. We might not even know what an optimal strategy looks like. Instead, the goal is to design a locally optimal strategy, known as an equilibrium policy.
The fundamental idea behind treating the TIC control problem as an intrapersonal game is to consider a game with a continuum of players (selves) over the interval $[0,T]$. Each player at time $s$ in $[0,T]$ is guided by the TIC objective \eqref{GeneralTIC} and selects an optimal strategy by assuming that future policies will have chosen their own best.
%Moreover, it is also assumed that each player at time $s$ makes the optimal choice and only affect the controlled system from $s$ to $s+\Delta s$.
An equilibrium policy of the TIC control problem is then defined as the set of strategies chosen by each player, which remains an equilibrium in any subgame and thus constitutes a subgame-perfect pure-strategy Nash equilibrium. Mathematically, \cite{Bjoerk2014,Bjoerk2017} defined the equilibrium policy and the associated value function as follows.

\begin{definition}[Equilibrium policy and Equilibrium value function] \label{Def:EquiPolicy}
Consider an admissible control law $\widehat{\bm{u}}$ as a candidate
equilibrium law and let $\bm{u}$ be any arbitrary admissible control law and $\Delta s>0$ be any fixed real number. For any initial point $(s,x)\in[0,T]\times\mathbb{R}^d$, define a perturbed policy $\bm{u}_{\Delta s}$ by 
\begin{equation}
    \bm{u}_{\Delta s}(\epsilon,x)={\left\{\begin{array}{l l}{\bm{u}(\epsilon,x)}&{{\mathrm{~~for~~}}s\leq \epsilon< s+\Delta s,~~ x\in\mathbb{R}^{d},} \\
    ~ \\ 
    {\widehat{\bm{u}}(\epsilon,x)}&{{\mathrm{~~for~~}}s+\Delta s\leq \epsilon\leq T,~~ x\in\mathbb{R}^{d}.}\end{array}\right.}
\end{equation}
If the candidate law $\widehat{\bm{u}}$ and the perturbed one $\bm{u}_{\Delta s}$ satisfy the inequality 
\begin{equation} \label{Local optimality} 
			\underset{\Delta s\downarrow 0}{\underline{\lim}}\frac{J(s,x;\widehat{\bm{u}})-J(s,x;\bm{u}_{\Delta s})}{\Delta s}\geq 0
		\end{equation} 
for all $(s,x)\in[0,T]\times\mathbb{R}^d$, then we say $\widehat{\bm{u}}$ is an equilibrium policy and the equilibrium value function is defined by $V(s,x)=J(s,x;\widehat{\bm{u}})$.   
\end{definition}

The equilibrium strategy and value function are identified by a recursive equation, similar to the Bellman equation in stochastic control problems. To understand the TIC recursive equation, it is helpful to first examine it in a discrete setting, as discussed in \cite{Bjoerk2014}. As the time mesh size shrinks, this recursive equation converges to an (equilibrium) HJB equation. Since the continuous-time extension is more illustrative than rigorously formal, the extended HJB system \eqref{HJBSys} is presented as a definition rather than a formal proposition, following \cite{Bjoerk2017}. For a more precise derivation and analysis of the discretization and limiting process, we refer the readers to \cite{Wei2017, Yong2012, He2021, bensoussan2013mean}.

% The equilibrium strategy and value function are determined by a system of HJB equations. For a detailed derivation and analysis of the HJB system, refer to \cite{Wei2017, Yong2012, He2021, bensoussan2013mean} for comprehensive insights.

\begin{definition}[Equilibrium HJB system] The extended HJB system of equations for $V(s,x)$, $f(t,s,x)$, and $g(s,x)$ is defined as
follows: for $0\leq t \leq s \leq T$ and $x\in\mathbb{R}^d$, 
\begin{equation} \label{HJBSys}
\left\{
    \begin{array}{l}
        \begin{aligned}
        0 & ~=~ \sup\limits_{a\in\mathcal{U}}\Big\{\mathbb{A}^a V(s,x)-\Delta^a_1-\Delta^a_2 \Big\},  \\

        0 & ~=~ \mathbb{A}^{\widehat{\bm{u}}} f(t,s,x), \\

        0 & ~=~ \mathbb{A}^{\widehat{\bm{u}}} g(s,x), 
        \end{aligned}
    \end{array}
\right. 
\end{equation} 
with the terminal conditions: $V(T,y)=f(T,T,y)+G(T,\psi(y))$, $f(t,T,y)=F(t,y)$, and $g(T,y)=\psi(y)$, where $\widehat{\bm{u}}$ denotes the control law that realizes the supremum in the $V$-equation, $\mathbb{A}^a=\frac{\partial}{\partial s}+b(s,x,a)\frac{\partial}{\partial x}+\frac{1}{2}\sigma^2(s,x,a)\frac{\partial^2}{\partial x^2}$ is the usual controlled infinitesimal operator, and the balancing terms $\Delta^a_1$ and $\Delta^a_2$ are introduced to revive the recursion among the subgame problems indexed by time, given by  
\begin{equation*} 
\left\{
    \begin{array}{l}
        \begin{aligned}
            \Delta^a_1 & ~:=~ \mathbb{A}^a f(s,s,x)-\left.\big(\mathbb{A}^a f(t,s,x)\big)\right|_{t=s},  \\

        \\[-15pt]

        \Delta^a_2 & ~:=~ \mathbb{A}^a G(s,g(s,y))-G_g(s,g(s,x))\cdot\mathbb{A}^a g(s,x), 
        \end{aligned} 
    \end{array}
\right. 
\end{equation*} 
in which $G_g$ is the first-order partial derivative of $G(s,g)$ with respect to $g$. Then, $f$, $g$, and $V$ have the probabilistic representations
\begin{equation} \label{ProbInter}
\left\{
    \begin{array}{l}
        \begin{aligned}
            f(t,s,x) & ~=~ \mathbb{E}\left[F\big(t,X^{\bm{u}}(T)\big)|X^{\bm{u}}(s)=x\right], \\

            g(s,x) & ~=~ \mathbb{E}\big[\psi\big(X^{\bm{u}}(T)\big)|X^{\bm{u}}(s)=x\big], \\

            V(s,x) &~=~ f(s,s,x)+G(s,g(s,x)).
        \end{aligned}
    \end{array}
\right. 
\end{equation} 
Furthermore, the function $\widehat{\bm{u}}$ realizing the supremum in the $V$--equation of \eqref{HJBSys} is an equilibrium policy and $V$ is the corresponding equilibrium value function.  
\end{definition}

Suppose that the supremum term in \eqref{HJBSys} admits a sufficiently regular selection \(\varphi\) that consistently achieves the supremum. By the representation of $V$ in \eqref{ProbInter}, it must depends on functions evaluated at \((s, s, x)\). Substituting it into the remaining two PDEs for \(f\) and \(g\) in \eqref{HJBSys} then yields a (nonlocal) PDE system of the form \eqref{nPDE}, where those unknown functions are evaluated at both \((t, s, x)\) and \((s, s, x)\). For further discussion on the solvability and other properties of the extended HJB system \eqref{HJBSys}, see \cite{Bjoerk2014, Bjoerk2017, Wei2017, lei2023nonlocal, LeiSDG, hernandez2023me}.

%--------------------------------------------------------------------------------

\subsection{Dynamic Mean-Variance Portfolio Selection} \label{subsec:MV}
After briefly reviewing the game-theoretic approach to TIC control problems, we now focus on dynamic MV portfolio selection in an incomplete market with stochastic investment opportunities. We first present a probabilistic (BSVIE) representation of dynamically optimal MV portfolios. Then, we prove the existence and uniqueness of the solutions to the BSVIE, ensuring the well-definedness of the equilibrium investment policy. Finally, we explore various stochastic environments, highlighting the practical advantages of our probabilistic approach.

First of all, let us introduce Markowitz's MV objective as follows: 
\begin{equation} \label{MVproblem}
    J(s,r,w;\bm{u})=\omega(s)\mathbb{E}_{s,r,w}[W^{\bm{u}}(T)]-\frac{\gamma}{2}\upsilon(s)\mathrm{Var}_{s,r,w}[W^{\bm{u}}(T)], 
\end{equation}
where $(s,r,w)\in[0,T]\times\mathbb{R}^{l+1}$, $\gamma$ is the risk-aversion coefficient, and the information of $\mathcal{F}_s$ provided at $s$ is the state variable $R(s)=r$ and the investor's wealth state $W(s)=w$. In addition, $\omega(s)\geq 0$ and $\upsilon(s)> 0$ are any first differentiable functions, which indicate the investor's weight between conditional expectation $\mathbb{E}_{s,r,w}[\cdot]$ (reward) and conditional variance $\mathrm{Var}_{s,r,w}[\cdot]$ (risk). This objective \eqref{MVproblem} captures both rewards and risks, while allowing for dynamic adjustments of the investor's aversion to risk based on the timing of decisions. Moreover, the underlying dynamics of $(S,R,W)(\cdot)$ is specified as follows:   
\begin{equation} \label{Incompletedynamics}
\left\{
    \begin{array}{l}
     \begin{aligned}
         \displaystyle{\frac{dS(s)}{S(s)}} & ~=~ \mu(s,R(s))ds+{\sigma}(s,R(s))dB^S(s), \\

        dR(s) & ~=~m(s,R(s))ds+n(s)dB^R(s), \\

        dW(s) & ~=~[r W(s)+\bm{u}(s)\beta(s,R(s))]ds+\bm{u}(s)\sigma(s,R(s))dB^S(s),
     \end{aligned}
    \end{array}
\right. 
\end{equation}
where the investor chooses an investment policy of the dollar amount invested in the stock at time $s$, an adapted process $\bm{u}$. Here, the mean return rate $\mu$ and volatility rate $\sigma$ of the stock price $S$ are deterministic functions depending on temporal variable $s$ and state variable $R(s)$, and we denote $\beta=\mu-r_f$ as the excess return rate over the risk-free rate $r_f$. $m$ and $n$ of the state variable $R(\cdot)$ are also both deterministic functions of suitable dimensions. We then study the problem of our interest on a filtered probability space $(\Omega,\mathcal{F}, \{\mathcal{F}_s\}_{s\in[0,T]}, \mathbb{P})$, where two correlated Brownian motions, $B^S$ and $B^R$ with correlation coefficient $\varrho\in[-1,1]$ are defined. All stochastic processes are assumed to be adapted to $\mathbb{F}:=\{\mathcal{F}_s\}_{s\in[0,T]}$, the augmented filtration generated by $B^S$ and $B^R$. We can observe that in this framework, the market is \textbf{incomplete} because trading in stocks and bonds cannot perfectly hedge against the changes in the stochastic investment opportunity set. However, in special cases where there is perfect correlation between the stock return and the state variable (i.e., \(\rho = \pm 1\)), dynamic market completeness could be achieved. For the case of zero correlation, there is no hedging demand for the state variable, as trading in stocks cannot mitigate the fluctuations in the state variable. These special cases leading to trivial solutions are then excluded from our analyses below.

Before proceeding with further analysis, we first transform equations \eqref{MVproblem} and \eqref{Incompletedynamics} into a more manageable form. Specifically, we rewrite \eqref{MVproblem} to match the structure of \eqref{GeneralTIC}.
\begin{equation} \label{equiMV}
    J(s,r,\widetilde{w};\bm{u})=\mathbb{E}_{s,r,\widetilde{w}}\big[\Phi(s,\widetilde{W}^{\bm{u}}(T))\big]+\Psi\big(\mathbb{E}_{s,r,\widetilde{w}}\big[\widetilde{W}^{\bm{u}}(T)\big]\big), 
\end{equation}
where $\Phi(s,w)=\rho(s)w-\frac{\gamma}{2}w^2$ with $\rho(s)=\frac{\omega(s)}{\upsilon(s)}$, and $\Psi(w)=\frac{\gamma}{2}w^2$. Using It\^{o}'s Lemma, we can rewrite the managed wealth process as follows: 
\begin{equation} \label{Updatedwealth}
    d\widetilde{W}(s)=\bm{u}(s)\widetilde{\beta}(s,R(s))ds+\bm{u}(s)\widetilde{\sigma}(s,R(s))dB^S(s), 
\end{equation}
where $(\widetilde{W},\widetilde{\beta},\widetilde{\sigma})(s,R(s))=(W(s),\beta(s,R(s)),\sigma(s,R(s)))\exp\{r_f(T-s)\}$. It is clear that by appropriately adjusting the system parameters, we can reformulate the MV problem \eqref{equiMV} with \eqref{Updatedwealth} to align with the original problem \eqref{MVproblem}-\eqref{Incompletedynamics}. This transformation simplifies our subsequent analysis and leads to the conclusion that the equilibrium value function and related functions from \eqref{HJBSys} become separable in terms of current wealth. As a result, the equilibrium policy \(\widehat{\bm{u}}\) at \(s\) no longer depends on \(W(s)\), making the MV problem more tractable. Without loss of generality, we note that the two spatial arguments differ by a scaling factor \(\exp\{r_f(T-s)\}\), so we will treat them as equivalent in the analysis, which does not affect the search for the equilibrium investment policy.

Next, by identifying $X=(R,W)$, $\psi(X)=W$, and $B=(B^S,B^R)$ in \eqref{GeneralTIC} and \eqref{Statedynamics}, and noting that the controlled infinitesimal operator $\mathbb{A}^a$ transforms an arbitrary twice continuously differentiable function $\phi(s,r,w)$ as 
\begin{equation*}
    \mathbb{A}^a\phi(s,r,w)=\frac{\partial\phi}{\partial s}+a\widetilde{\beta}\frac{\partial\phi}{\partial w}+m\frac{\partial\phi}{\partial r}+\frac{1}{2}\left(a^2\widetilde{\sigma}^2\frac{\partial^2\phi}{\partial w^2}+n^2\frac{\partial^2\phi}{\partial r^2}+2\varrho a\widetilde{\sigma} n\frac{\partial^2\phi}{\partial w\partial r}\right), 
\end{equation*}
we obtain the extended HJB system \eqref{HJBSys} within the context of \eqref{Incompletedynamics}-\eqref{Updatedwealth} as follows:
\begin{equation} \label{ExampleHJBSys}
\left\{
    \begin{array}{l}
        \begin{aligned}
            0 & ~=~ \sup\limits_{a\in\mathcal{U}}\Big\{\left.\big(\mathbb{A}^a f(t,s,r,w)\big)\right|_{t=s}+\Psi_g\big(g(s,r,w)\big)\cdot\mathbb{A}^a g(s,r,w)\Big\},  \\

            0 & ~=~ \mathbb{A}^{\bm{u}} f(t,s,r,w), \\

            0 & ~=~ \mathbb{A}^{\bm{u}} g(s,r,w),
        \end{aligned}  
    \end{array}
\right. 
\end{equation} 
with the terminal conditions $V(T,r,w) = \rho(T)w$, $f(t,T,r,w)=\Phi(t,w)$, and $g(T,r,w)=w$ for $(t,s,r,w)\in\Delta[0,T]\times\mathbb{R}^{l+1}$, where we note that $\mathbb{A}^aV(s,r,w)=\mathbb{A}^af(s,s,r,w)+\mathbb{A}^a\Psi(g(s,r,w))$. Consequently, the supremum term of \eqref{ExampleHJBSys} reads 
\begin{equation*} 
\begin{array}{l}
        \begin{aligned}
            \sup\limits_{a\in\mathcal{U}}\Big\{& f_s(t,s,r,w)|_{t=s} +\widetilde{\beta} af_w(s,s,r,w)+m f_w(s,s,r,w) \\
            
            & ~+~ \frac{1}{2}\widetilde{\sigma}^2 a^2 f_{ww}(s,s,r,w) +\frac{1}{2}n^2f_{rr}(s,s,r,w)+\varrho n\widetilde{\sigma} a f_{rw}(s,s,r,w) \\

            & ~+~ \gamma g(s,r,w)\Big[g_s(s,r,w)+\widetilde{\beta} a g_w(s,r,w)+m g_w(s,r,w)+\frac{1}{2}\widetilde{\sigma}^2 a^2 g_{ww}(s,r,w) \\

            & ~+~ \frac{1}{2}n^2g_{rr}(s,r,w)+\varrho n\widetilde{\sigma} a g_{rw}(s,r,w)\Big]\Big\}=0,
        \end{aligned}
    \end{array} 
\end{equation*} 
the first order condition of the maximization of which is given by 
\begin{equation*}
    \begin{split}
        & \widehat{\bm{u}}(s,r,w) = -\frac{1}{\sigma^2}\frac{1}{f_{ww}(s,s,r,w)+\gamma g(s,r,w)g_{ww}(s,r,w)}\Big\{\widetilde{\beta}(s,r) f_w(s,s,r,w) \\
        &\qquad 
        +\varrho (n\widetilde{\sigma})(s,r) f_{rw}(s,s,r,w) + \gamma g(s,r,w)\big[\widetilde{\beta}(s,r) g_w(s,r,w)+\varrho (n\widetilde{\sigma})(s,r) g_{rw}(s,r,w)\big]\Big\}.  
    \end{split}
\end{equation*}

Next, following the approach in \cite{Basak2010}, we show that the functions to be determined, \(V\), \(f\), and \(g\), are all separable in \(\widetilde{W}(s)\), and the policy \(\widehat{\bm{u}}\) no longer depends on \(\widetilde{W}(s)\), which make the problem tractable. Using the probabilistic interpretations \eqref{ProbInter} of \(g(s,r,w) = \mathbb{E}_{s,r,w}[\widetilde{W}(T)]\) and \(f(t,s,r,w) = \rho(t)\mathbb{E}_{s,r,w}[\widetilde{W}(T)] - \frac{\gamma}{2}\mathbb{E}_{s,r,w}[(\widetilde{W}(T))^2]\), we have
\begin{equation*}
    \begin{cases}
        \begin{aligned}
            g(s,r,w) & ~=~  w+\mathbb{E}_{s,r,w}\left[\int^T_s\widehat{\bm{u}}(\tau,R(\tau),W(\tau))\widetilde{\beta}(\tau,R(\tau))d\tau\right]=w+\phi(s,r), \\

        f(t,s,r,w) & ~=~ -\frac{\gamma}{2}w^2+\rho(t)w+\rho(t)\mathbb{E}_{s,r,w}\left[\int^T_s\widehat{\bm{u}}(\tau,R(\tau),W(\tau))\widetilde{\beta}(\tau,R(\tau))d\tau\right] \\
         
        & \qquad ~-~
        \gamma\mathbb{E}_{s,r,w}\left[\int^T_s\Big[W(\tau)\widehat{\bm{u}}(\tau,R(\tau),W(\tau))\widetilde{\beta}(\tau,R(\tau))\right. \\

        & \qquad\qquad\qquad\qquad\qquad  
        \left.+\frac{1}{2}\widehat{\bm{u}}^2(\tau,R(\tau),W(\tau))\widetilde{\sigma}^2(\tau,R(\tau))\Big]d\tau\right] \\
        % \qquad\qquad\quad  
        % =-\frac{\gamma}{2}w^2+\left(\rho(t)+\mathbb{E}_{s,r,w}\left[\int^T_s\widehat{\mathbb{u}}(\tau,R(\tau),W(\tau))\widetilde{\beta}(\tau,R(\tau))d\tau\right]\right)w+\psi(t,s,r)\\
         
        & ~=~ 
        -\frac{\gamma}{2}w^2+\varphi(t,s,r)w+\psi(t,s,r).
        \end{aligned} 
    \end{cases}
\end{equation*}
Consequently, one has 
\begin{equation} \label{AnalyticP}
    \begin{split}
        \widehat{\bm{u}}(s,r,w)&=\frac{1}{\gamma\widetilde{\sigma}^2(s,r)}\Big[\widetilde{\beta}(s,r) \varphi(s,s,r)+\varrho (n\widetilde{\sigma})(s,r) \varphi_r(s,s,r) + \gamma \widetilde{\beta}(s,r)\phi(s,r)\Big]. 
    \end{split}
\end{equation}

% According to the assumption of $f(t,s,y,z)=\rho(t)\mathbb{E}_{s,y,z}[X^\alpha_T]-\frac{\gamma}{2}\mathbb{E}_{s,y,z}[(X^\alpha_T)^2]$, one has 

% \begin{equation*} 
% \left\{
%     \begin{array}{l}
%         \displaystyle{f_{yy}=-\gamma, } \\

%         ~ \\

%         \displaystyle{ f_y(t,s,y,z)=-\gamma y+\varphi(t,s,z)=-\gamma y+P(t,s),} \\

%         ~ \\

%         \displaystyle{ g(s,y,z)=y+\phi(s,z)=y+Y(s).} 
%     \end{array}
% \right. 
% \end{equation*} 

% \ \

Thanks to the nonlocal Feynman-Kac formula \eqref{FKformula}, we introduce the following decoupled FBSVIE system, which consists of a forward SDE for \( R(s) \), representing stochastic investment opportunities along with two mutually coupled BSVIEs: \((P(t,s), Q(t,s))\) and \((Y(s), Z(s))\). 
\begin{equation} \label{AppFBSVIE}
\left\{
    \begin{array}{l}
     \begin{aligned}
         dR(s) & ~=~ m(s,R(s))ds+n(s)dB^R(s), \\

        dP(t,s) & ~=~ -\frac{\beta(s,R(s))}{\gamma\sigma^2(s,R(s))}\Big(\beta(s,R(s)) P(s,s) +\varrho (n\sigma)(s,R(s)) Q(s,s) \\

        & \qquad  
        + \gamma\beta(s,R(s)) M(s)\Big)ds+Q(t,s) dB^R(s), \\

        dM(s) & ~=~ -\frac{\beta(s,R(s))}{\gamma\sigma^2(s,R(s))}\Big(\beta(s,R(s)) P(s,s)+\varrho (n\sigma)(s,R(s)) Q(s,s) \\

        & \qquad + \gamma\beta(s,R(s)) M(s)\Big)ds+N(s) dB^R(s), \\

        R(t) & ~=~ r, \quad P(t,T) ~=~ \rho(t), \quad M(T) ~=~ 0, \quad 0\leq t \leq s \leq T, \quad r\in\mathbb{R}^l,
     \end{aligned}  
    \end{array}
\right. 
\end{equation} 
the BSDE \((M, N)(s)\) of which can be viewed as a degenerate BSVIE, where the additional parameter \( t \) takes values only within a singleton. Since the forward SDE is decoupled from the two remaining BSVIE systems and their uncertainties stem from the same Brownian motion \( B^R(\cdot) \), all coefficients in this BSVIE system are known and adapted to the filtration generated by \( B^R(\cdot) \). As a result, our previous well-posedness results are applicable for analyzing its solvability. Based on our previous analysis in Sections \ref{sec:Well}-\ref{sec:MarkovianBSVIEs}, we come to the following conclusion.

\begin{lemma} \label{Applemma}
    If $\rho$ is $C^1_b$ in $t$, $m$ is Lipschitz in $R$, and $\beta$ and $\sigma$ are both $C^1_b$ in $R$, then the FBSVIE \eqref{AppFBSVIE} admits a unique adapted solution $(R,P,Q,M,N)(t,s)$ in $\Delta[0,T]$. 
\end{lemma}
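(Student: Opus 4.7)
The plan is to exploit a structural decoupling that collapses the coupled BSVIE--BSDE system \eqref{AppFBSVIE} into a single standard BSDE driven by the forward state $R$. First I would apply classical SDE theory to the forward equation: since $m$ is Lipschitz in $R$ and $n$ is deterministic and bounded, there exists a unique $\mathbb{F}$-adapted $R(\cdot)$ with moments of all orders. The compositions $\beta(s,R(s))$, $\sigma(s,R(s))$ and $n(s)$ then become bounded progressively measurable processes, and so are the quotients $\beta/(\gamma\sigma^2)$ and $\varrho n/(\gamma\sigma)$, provided a practical lower bound $\sigma \ge \underline{\sigma} > 0$ (consistent with the uniform ellipticity (B0) from Section \ref{sec:MarkovianBSVIEs}).

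The crucial observation is that the drifts of the $P$-BSVIE and of the $M$-BSDE are \emph{identical} and depend only on the diagonal triple $(P(s,s), Q(s,s), M(s))$, never on the off-diagonal pair $(P(t,s), Q(t,s))$. I therefore propose the ansatz $P(t,s) := M(s) + \rho(t)$ and $Q(t,s) := N(s)$, which automatically satisfies the $P$-BSVIE once $(M,N)$ is known: $dP(t,s)=dM(s)$ reproduces both the drift and the martingale part of the $M$-equation, and the terminal value $M(T)+\rho(t)=\rho(t)$ matches the datum of $P$. Substituting the ansatz back into the $M$-equation collapses the diagonal dependence and yields the closed BSDE
\begin{equation*}
    dM(s) = -\frac{\beta(s,R(s))}{\gamma\sigma^2(s,R(s))}\big[(1+\gamma)\beta(s,R(s))M(s) + \beta(s,R(s))\rho(s) + \varrho(n\sigma)(s,R(s))N(s)\big]ds + N(s)dB^R(s),
\end{equation*}
with $M(T) = 0$. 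Its driver is affine in $(M,N)$ with bounded progressively measurable coefficients and a bounded free term $\beta^2\rho/(\gamma\sigma^2)$, so classical BSDE theory (equivalently, Theorem \ref{GeneralWellp} degenerated to the singleton-$t$ case) produces a unique adapted solution $(M,N)$, from which $(P,Q)$ is read off via the ansatz.

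For uniqueness of the full system I would reverse the argument: for any adapted solution $(P,Q,M,N)$ of \eqref{AppFBSVIE}, the differences $\Delta(t,s) := P(t,s) - M(s)$ and $\Xi(t,s) := Q(t,s) - N(s)$ satisfy the driverless linear BSDE $d\Delta(t,s) = \Xi(t,s)dB^R(s)$ with deterministic terminal $\rho(t)$, which forces $\Delta(t,s) \equiv \rho(t)$ and $\Xi(t,s) \equiv 0$. Hence $(P,Q)$ is entirely determined by $(M,N)$, and $(M,N)$ is pinned down by the reduced BSDE above.

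The main anticipated obstacle is controlling the coefficient $\beta/\sigma^2$ along the trajectory of $R$. Under the bare $C^1_b$ hypothesis on $\beta,\sigma$ this effectively requires a lower bound on $\sigma$; in its absence the reduced BSDE must be placed in a stochastic-Lipschitz setting and Theorem \ref{ExtendedWellp} invoked, with the conditional $\textsc{BMO}$-type integrability of the coefficient processes verified via standard moment estimates for $R$. This is the one place where the full strength of the machinery built in Section \ref{sec:Well} is genuinely needed.
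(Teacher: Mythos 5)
Your proposal is correct, and it takes a genuinely different route from the paper. The paper's own proof is a one-line appeal to the general machinery: it observes that the coefficients of \eqref{AppFBSVIE} satisfy (A1)--(A3) (or their relaxations) and invokes Theorem \ref{GeneralWellp} together with Theorem \ref{MarkWellp}, so that existence, uniqueness, and the well-definedness of the diagonal pair $(P,Q)(s,s)$ all come from the Malliavin-calculus framework of Section \ref{sec:Well}. You instead exploit the specific algebraic structure of \eqref{AppFBSVIE} --- the drifts of the $P$- and $M$-equations are identical and depend only on $(P(s,s),Q(s,s),M(s))$ --- to reduce the whole system to a single linear BSDE via the ansatz $P(t,s)=M(s)+\rho(t)$, $Q(t,s)=N(s)$; your verification of the ansatz, the resulting closed affine BSDE for $(M,N)$, and the reverse (uniqueness) argument via the driverless equation for $P(t,s)-M(s)$ with deterministic terminal value $\rho(t)$ are all sound, provided one works in a solution class where $Q(s,s)$ is a priori meaningful (e.g.\ the paper's $\mathcal{M}[0,T]$, within which the martingale-representation step is legitimate). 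What your approach buys is explicitness and elementarity: the equilibrium quantities $P(s,s)=M(s)+\rho(s)$ and $Q(s,s)=N(s)$ are exhibited in closed form in terms of a classical linear BSDE, the diagonal processes are trivially well-defined, and no Malliavin differentiability of the data is needed. What the paper's approach buys is uniformity with the rest of the development and immediate access to the regularity statements (continuity of $s\mapsto Q(t,s)$, representation $Q=D_sP$) that feed into Proposition \ref{AppThm}. You are also right, and more careful than the paper, to flag that the hypothesis ``$\sigma$ is $C^1_b$ in $R$'' does not by itself bound $\beta/\sigma^2$; a uniform lower bound $\sigma\ge\underline{\sigma}>0$ (in the spirit of (B0)) or a passage to the stochastic-Lipschitz setting of Theorem \ref{ExtendedWellp} is genuinely needed for either argument to close.
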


Lemma \ref{Applemma} follows directly from our well-posedness results in Theorem \ref{GeneralWellp} and Theorem \ref{MarkWellp}. Later, we will illustrate with some common stochastic volatility models that satisfy the assumptions made. Moreover, the extension results from Theorem \ref{ExtendedWellp} allow us to relax those assumptions, making the framework more adaptable to various stochastic market environments. Next, with the well-posed FBSVIE \eqref{AppFBSVIE}, we can give a probabilistic representation of the dynamically optimal MV investment policy under a stochastic investment environment.

\begin{proposition} \label{AppThm}
If $\rho$ is $C^1_b$ in $t$, $m$ is Lipschitz in $R$, and $\beta$ and $\sigma$ are both $C^1_b$ in $R$, the dynamically optimal (equilibrium) MV policy of a sophisticated investor is given by
    \begin{equation} \label{Investpolicy}
    \begin{aligned}
        \widehat{\bm{u}}(s,R(s),W(s)) & ~=~ \frac{\beta(s,R(s))}{\gamma\sigma^2(s,R(s))}(P(s,s)+ \gamma M(s))\exp\{r_f(T-s)\} \\ 
        & \quad~
        ~+~\frac{\varrho n(s)}{\gamma\sigma(s,R(s))} Q(s,s)\exp\{r_f(T-s)\}, \quad 0\leq s\leq T,
    \end{aligned}  
\end{equation}
where $(R,P,Q,M,N)(t,s)$ is the unique adapted solution of \eqref{AppFBSVIE}. 
\end{proposition}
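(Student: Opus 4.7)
The strategy is to reduce the proposition to the three tools already established in the preceding two sections. First, the extended HJB system \eqref{ExampleHJBSys}, together with Definition~\ref{Def:EquiPolicy}, provides the characterization that an equilibrium policy is any $\widehat{\bm u}$ attaining the supremum in the $V$-equation; the first-order condition was already computed in \eqref{AnalyticP} in terms of the auxiliary functions $(\varphi,\phi)$ appearing in the separable ansatz for $(f,g)$. Second, Lemma~\ref{Applemma} ensures that the FBSVIE \eqref{AppFBSVIE} has a unique adapted solution $(R,P,Q,M,N)(t,s)$, and Theorems~\ref{MarkWellp} and~\ref{ContinuityZ} make the diagonals $P(s,s)$, $Q(s,s)$, $M(s)$ well-defined and $s$-continuous, so the right-hand side of \eqref{Investpolicy} is a legitimate candidate policy.

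The next step is to invoke the nonlocal Feynman-Kac formula \eqref{FKformula} for the Markovian BSVIE \eqref{AppFBSVIE}. Since its generator depends on $(s,R(s))$ and on the diagonal values $P(s,s),Q(s,s),M(s)$ only, the theorem from Section~\ref{sec:MarkovianBSVIEs} produces classical $C^{1,1,2}$-solutions $u(t,s,r)$ and $v(s,r)$ of nonlocal semilinear parabolic PDEs of the type \eqref{nPDE}, with
\[
P(t,s)=u(t,s,R(s)),\quad Q(t,s)=n(s)u_r(t,s,R(s)),\quad M(s)=v(s,R(s)),\quad N(s)=n(s)v_r(s,R(s)),
\]
so that in particular $P(s,s)=u(s,s,R(s))$, $Q(s,s)=n(s)u_r(s,s,R(s))$, and $M(s)=v(s,R(s))$. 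This converts the probabilistic candidate \eqref{Investpolicy} into a pointwise expression in $(s,r)$ through the deterministic PDE solutions $u$ and $v$.

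The third step is to match this expression with the analytic HJB policy \eqref{AnalyticP}. The auxiliary functions $\varphi$ and $\phi$ of \eqref{AnalyticP} themselves satisfy a coupled nonlocal PDE system obtained by plugging the separable ansatz $g(s,r,w)=w+\phi(s,r)$ and $f(t,s,r,w)=-\tfrac{\gamma}{2}w^2+\varphi(t,s,r)w+\psi(t,s,r)$ into \eqref{ExampleHJBSys} and matching coefficients in $w$, with terminal data $\varphi(t,T,r)=\rho(t)$ and $\phi(T,r)=0$. A direct comparison will show that this system coincides with the one satisfied by $(u,v)$, up to the deterministic scaling $\widetilde\beta/\widetilde\sigma^2=(\beta/\sigma^2)\exp\{-r_f(T-s)\}$ and $n/\widetilde\sigma=(n/\sigma)\exp\{-r_f(T-s)\}$ that converts between the unscaled BSVIE coefficients in \eqref{AppFBSVIE} and the scaled coefficients used in the HJB formulation. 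Uniqueness of $C^{1,1,2}$-classical solutions of \eqref{nPDE}, established in Section~\ref{sec:MarkovianBSVIEs}, will then yield algebraic identifications of $(\varphi,\varphi_r,\phi)$ on the diagonal with $(P(s,s),Q(s,s)/n,M(s))$ up to the scaling factor $\exp\{r_f(T-s)\}$. Substituting these identifications into \eqref{AnalyticP} collapses the analytic formula to \eqref{Investpolicy}, and the conclusion follows from Definition~\ref{Def:EquiPolicy} because the so-defined $\widehat{\bm u}$ realizes the supremum in the $V$-equation.

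The main obstacle will be this PDE-matching step: since the $(P,Q)$-BSVIE and the $(M,N)$-BSDE in \eqref{AppFBSVIE} are coupled through their diagonal values, the induced nonlocal PDE system is genuinely coupled, and both equations have to be matched simultaneously with the HJB-derived $(\varphi,\phi)$-system, with every occurrence of the scaling factor $\exp\{r_f(T-s)\}$ propagated consistently through the $\widetilde{\beta},\widetilde{\sigma}$-to-$\beta,\sigma$ conversion. A subsidiary technical point is checking admissibility of the BSVIE-based $\widehat{\bm u}$ so that the wealth SDE driven by it admits a unique strong solution and the probabilistic interpretation \eqref{ProbInter} is self-consistent; this follows from boundedness and $s$-continuity of the BSVIE diagonals given by Theorem~\ref{ContinuityZ}, combined with the $C^1_b$-regularity of $(\beta,\sigma,n,\rho)$ and the Lipschitzness of $m$ assumed in the hypotheses.
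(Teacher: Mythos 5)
Your proposal is correct and follows essentially the same route as the paper, which presents Proposition~\ref{AppThm} without a separate proof precisely because it is the concatenation of the HJB first-order condition \eqref{AnalyticP}, the separable ansatz for $(f,g)$, the nonlocal Feynman--Kac identification of $(\varphi,\varphi_r,\phi)$ on the diagonal with $(P(s,s),Q(s,s)/n,M(s))$ via \eqref{FKformula}, and the well-posedness of \eqref{AppFBSVIE} from Lemma~\ref{Applemma}. Your additional attention to the coupled PDE-matching step, the consistent propagation of the $\exp\{r_f(T-s)\}$ scaling between $(\widetilde\beta,\widetilde\sigma)$ and $(\beta,\sigma)$, and the admissibility of the resulting policy makes explicit the bookkeeping the paper leaves implicit.
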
 

In addition to the independence of the optimal investment policy \eqref{Investpolicy} from the current wealth \( W(s) \), implying that it is free of the randomness of the risky asset process \( B^S \), we make some interesting observations about the optimal investment policy as follows:
\begin{enumerate}
    \item The dynamically optimal investment policy \eqref{Investpolicy} exhibits a familiar structure, consisting of myopic and intertemporal hedging components, as in \cite{Basak2010}. The myopic investment, represented by the first term in \eqref{Investpolicy}, represents the optimal allocation for an investor focused solely on immediate returns while ignoring future optimality. The intertemporal hedging demand, reflected by the second term in \eqref{Investpolicy}, arises as a strategy to mitigate risk from volatile market conditions (stochastic states) aligning with hedging principles in portfolio choice literature since \cite{Merton1971}. In the case of zero correlation (\(\varrho=0\)), no hedging can be accommodated for the state variable, as stock trading does not offset its fluctuations.
    %\item For the MV optimizer, the precise values of the weights \(\omega(s)\) and \(\upsilon(s)\)---representing the investor's balance between return and risk---are less critical than their ratio, \(\rho(s)\). Although this ratio does not directly appear in the equilibrium investment policy \eqref{Investpolicy}, it influences the equilibrium strategy by affecting the terminal condition of the BSVIE \eqref{AppFBSVIE}. This influence propagates through the adapted solution pair \((P, Q)\), ultimately shaping the equilibrium strategy and impacting both the myopic and intertemporal hedging components.
    \item The diagonal solution processes \( P(s,s) \) and \( M(s) \) of \eqref{AppFBSVIE} impact only the myopic investment, while the diagonal martingale integrand process \( Q(s,s) \) of \eqref{AppFBSVIE} impact only the hedging demand. When \(\varrho=0\), the FBSVIE \eqref{AppFBSVIE} reduces to a form without \( Q(s,s) \), which is the type of BSVIE commonly studied in the existing literature. However, whenever the correlation coefficient is non-zero, it becomes essential to comprehend and manage the diagonal process \( Q(s,s) \). This paper's in-depth examination of the diagonal martingale integrand process provides crucial support for capturing market fluctuations and addressing the inherent hedging requirements in various stochastic environments.
\end{enumerate}

A natural question about our probabilistic representation \eqref{Investpolicy} through FBSVIEs \eqref{AppFBSVIE} is its necessity over the analytic one \eqref{AnalyticP} for the equilibrium investment policy. We highlight the merits of our approach as follows. (1) Our approach significantly relaxes the underlying assumptions, allowing for more flexible financial market models. A PDE-analytic policy \eqref{AnalyticP} requires the unknown functions to be sufficiently smooth and differentiable to ensure the policy to be well-defined. However, as classical BSDE theory \cite{Pardoux1992,Yong1999} indicates, even when a given PDE admits only a continuous viscosity solution instead of a differentiable classical solution, we can still study the corresponding BSDE. This allows us to appropriately interpret its solution and martingale integrand process, which in turn ensures the well-definedness of the corresponding policy \eqref{Investpolicy}; also see the discussion in \cite{lei2024MV}.
%From this perspective, our probabilistic approach expands the coverage of interpretations for equilibrium policy in dynamic MV problems towards a more general setting; also see the discussion in \cite{lei2024MV}.
(2) The probabilistic representation based on BSVIEs also facilitates the application of deep-learning-based numerical scheme even with high-dimensional state variable \( R(\cdot)\), which is significant and relevant for modeling and capturing (by multiple factors) the complexities of the financial markets.
%By incorporating these dimensions, they enhance predictive power and improve decision-making in portfolio optimization and risk management.
In contrast, using a PDE-analytic approach poses challenges in numerical implementation in high dimensions. %, particularly the notorious ``curse of dimensionality''. 
Inspired by \cite{Han2018}, one can leverage the deep-learning capacity to approximate the solution to high-dimensional (nonlocal) PDE, trained with simulated trajectories of the corresponding BSVIEs (regardless of dimensionality), in which the probabilistic representation of the PDE solution is the key ingredient.

At last, we exemplify with a class of stochastic models for the state variable that satisfy the technical assumptions in Lemma \ref{Applemma} and Proposition \ref{AppThm}. We consider the state variable evolves according to a FSDE given by 
\begin{equation} \label{GeneralSVM}
    dF(R(s)) = \left( \theta(s) + \kappa(s) E(R(s)) \right) ds + \sigma(s) \, dB^R(s), 
\end{equation}
where $\theta(s)$ is a deterministic function used to model the drift or average direction of the process, \( \kappa(s) g(R(s)) \) represents a time-dependent adjustment to the drift of \( F(R(s)) \) (potentially to fit certain state-dependent structure, e.g., mean-reverting), and \(\sigma(s)\) is the volatility of the process. The following well-known models are nested within the framework of \eqref{GeneralSVM}: 
\begin{description}
    \item[\textbf{Ho–-Lee model}:] $F(R)=R$ and $\kappa(s)=0$ 
    \item[\textbf{Hull--White model}:] $F(R)=E(R)=R$ and $\kappa(s)<0$. It further nests the \textbf{Ornstein--Uhlenbeck process} and the \textbf{Vasicek model} as well as the \textbf{Brownian bridge}. 
    \item[\textbf{Bessel process}:] $F(R)=R$, $E(R)=1/R$, $\kappa(s)>0$, and $\theta(s)=0$.
\end{description}
By Lemma \ref{Applemma} and Proposition
\ref{AppThm}, choosing suitable \( C^1_b \) functions for \(\beta\) and \(\sigma\) guarantee the well-posedness of the corresponding FBSVIE \eqref{AppFBSVIE}, thereby ensuring a well-defined equilibrium investment policy \eqref{Investpolicy}. Furthermore, in the same spirit of Theorem \ref{ExtendedWellp}, the applicability can be further extended to some cases where \(\beta\) and \(\sigma\) are possibly unbounded.

%--------------------------------------------------------------------------------
%--------------------------------------------------------------------------------

\section{Conclusion} \label{Sec: Conclusions}
This paper advances the study of BSVIEs by proving the existence and uniqueness of solutions to general BSVIEs with nonlinear dependence on both the solution process and the martingale integrand part, as well as on their diagonal processes, over an arbitrary time horizon. The introduction of Malliavin calculus provides a fresh perspective on dealing with diagonal processes while distinguishes our work from the existing literature. Our study of BSVIEs also offers a probabilistic representation of classical solutions to a class of semi-linear PDEs via a nonlocal Feynman--Kac formula, paving the way for applying deep learning techniques to solving high-dimensional nonlocal PDEs. Our investigation of time-inconsistent stochastic control problems highlights the practical value of our BSVIE results. Our well-posedness results provide theoretical support of equilibrium investment strategies for dynamic mean-variance portfolio selection in stochastic volatility models. Specifically, in incomplete markets, the myopic strategy consists of the diagonal solution processes while the hedging demand is reflected by the martingale integrand component of the BSVIE solution. %This paper's in-depth exploration of the diagonal processes effectively captures market fluctuations and addresses hedging needs in stochastic investment environments. Given the tractability of our analysis, we believe that our results are well suited for various applications in financial economics.


\begin{thebibliography}{55}
% BibTex style file: imsart-number.bst, 2017-11-03
% Default style options (sort=1,type=number).
% Used options (sort=1,type=number).

\bibitem{artzner1999coherent}
\begin{barticle}[author]
\bauthor{\bsnm{Artzner},~\bfnm{Philippe}\binits{P.}}, \bauthor{\bsnm{Delbaen},~\bfnm{Freddy}\binits{F.}}, \bauthor{\bsnm{Eber},~\bfnm{Jean-Marc}\binits{J.-M.}} \AND \bauthor{\bsnm{Heath},~\bfnm{David}\binits{D.}}
(\byear{1999}).
\btitle{Coherent Measures of Risk}.
\bjournal{Mathematical Finance}
\bvolume{9}
\bpages{203--228}.
\end{barticle}
\endbibitem

\bibitem{Basak2010}
\begin{barticle}[author]
\bauthor{\bsnm{Basak},~\bfnm{Suleyman}\binits{S.}} \AND \bauthor{\bsnm{Chabakauri},~\bfnm{Georgy}\binits{G.}}
(\byear{2010}).
\btitle{Dynamic Mean-Variance Asset Allocation}.
\bjournal{Review of Financial Studies}
\bvolume{23}
\bpages{2970--3016}.
\bdoi{10.1093/rfs/hhq028}
\end{barticle}
\endbibitem

\bibitem{Beck2019}
\begin{barticle}[author]
\bauthor{\bsnm{Beck},~\bfnm{Christian}\binits{C.}}, \bauthor{\bsnm{E},~\bfnm{Weinan}\binits{W.}} \AND \bauthor{\bsnm{Jentzen},~\bfnm{Arnulf}\binits{A.}}
(\byear{2019}).
\btitle{Machine Learning Approximation Algorithms for High-Dimensional Fully Nonlinear Partial Differential Equations and Second-order Backward Stochastic Differential Equations}.
\bjournal{Journal of Nonlinear Science}
\bvolume{29}
\bpages{1563--1619}.
\bdoi{10.1007/s00332-018-9525-3}
\end{barticle}
\endbibitem

\bibitem{bensoussan2013mean}
\begin{bbook}[author]
\bauthor{\bsnm{Bensoussan},~\bfnm{Alain}\binits{A.}}, \bauthor{\bsnm{Frehse},~\bfnm{Jens}\binits{J.}}, \bauthor{\bsnm{Yam},~\bfnm{Phillip}\binits{P.}} \betal{et~al.}
(\byear{2013}).
\btitle{Mean field games and mean field type control theory}
\bvolume{101}.
\bpublisher{Springer}.
\end{bbook}
\endbibitem

\bibitem{Bjoerk2017}
\begin{barticle}[author]
\bauthor{\bsnm{Bj\"{o}rk},~\bfnm{Tomas}\binits{T.}}, \bauthor{\bsnm{Khapko},~\bfnm{Mariana}\binits{M.}} \AND \bauthor{\bsnm{Murgoci},~\bfnm{Agatha}\binits{A.}}
(\byear{2017}).
\btitle{On Time-inconsistent Stochastic Control in Continuous Time}.
\bjournal{Finance and Stochastics}
\bvolume{21}
\bpages{331--360}.
\bdoi{10.1007/s00780-017-0327-5}
\end{barticle}
\endbibitem

\bibitem{Bjoerk2014}
\begin{barticle}[author]
\bauthor{\bsnm{Bj\"{o}rk},~\bfnm{Tomas}\binits{T.}} \AND \bauthor{\bsnm{Murgoci},~\bfnm{Agatha}\binits{A.}}
(\byear{2014}).
\btitle{A Theory of {Markovian} Time-inconsistent Stochastic Control in Discrete Time}.
\bjournal{Finance and Stochastics}
\bvolume{18}
\bpages{545--592}.
\bdoi{10.1007/s00780-014-0234-y}
\end{barticle}
\endbibitem

\bibitem{cheridito2004coherent}
\begin{barticle}[author]
\bauthor{\bsnm{Cheridito},~\bfnm{Patrick}\binits{P.}}, \bauthor{\bsnm{Delbaen},~\bfnm{Freddy}\binits{F.}} \AND \bauthor{\bsnm{Kupper},~\bfnm{Michael}\binits{M.}}
(\byear{2004}).
\btitle{Coherent and convex monetary risk measures for bounded cadlag processes}.
\bjournal{Stochastic Processes and their Applications}
\bvolume{112}
\bpages{1--22}.
\end{barticle}
\endbibitem

\bibitem{Duffie1992}
\begin{barticle}[author]
\bauthor{\bsnm{Duffie},~\bfnm{Darrell}\binits{D.}} \AND \bauthor{\bsnm{Epstein},~\bfnm{Larry~G.}\binits{L.~G.}}
(\byear{1992}).
\btitle{Stochastic Differential Utility}.
\bjournal{Econometrica}
\bvolume{60}
\bpages{353--394}.
\end{barticle}
\endbibitem

\bibitem{Ekeland2006}
\begin{bmisc}[author]
\bauthor{\bsnm{Ekeland},~\bfnm{Ivar}\binits{I.}} \AND \bauthor{\bsnm{Lazrak},~\bfnm{Ali}\binits{A.}}
(\byear{2006}).
\btitle{Being serious about non-commitment: subgame perfect equilibrium in continuous time}.
\bdoi{10.48550/arXiv.math/0604264}
\end{bmisc}
\endbibitem

\bibitem{frei2013quadratic}
\begin{barticle}[author]
\bauthor{\bsnm{Frei},~\bfnm{Christoph}\binits{C.}} \AND \bauthor{\bsnm{Dos~Reis},~\bfnm{Gon{\c{c}}alo}\binits{G.}}
(\byear{2013}).
\btitle{Quadratic {FBSDE} with Generalized {Burgers}' type Nonlinearities, Perturbations, and Large Deviations}.
\bjournal{Stochastics and Dynamics}
\bvolume{13}
\bpages{1250015}.
\end{barticle}
\endbibitem

\bibitem{Hamaguchi2021}
\begin{barticle}[author]
\bauthor{\bsnm{Hamaguchi},~\bfnm{Yushi}\binits{Y.}}
(\byear{2021}).
\btitle{Extended Backward Stochastic {Volterra} Integral Equations and Their Applications to Time-inconsistent Stochastic Recursive Control Problems}.
\bjournal{Mathematical Control {\&} Related Fields}
\bvolume{11}
\bpages{197--242}.
\bdoi{10.3934/mcrf.2020043}
\end{barticle}
\endbibitem

\bibitem{Han2021}
\begin{barticle}[author]
\bauthor{\bsnm{Han},~\bfnm{Bingyan}\binits{B.}}, \bauthor{\bsnm{Pun},~\bfnm{Chi~Seng}\binits{C.~S.}} \AND \bauthor{\bsnm{Wong},~\bfnm{Hoi~Ying}\binits{H.~Y.}}
(\byear{2021}).
\btitle{Robust State-dependent Mean-variance Portfolio Selection: A Closed-loop Approach}.
\bjournal{Finance and Stochastics}
\bvolume{25}
\bpages{529--561}.
\bdoi{10.1007/s00780-021-00457-4}
\end{barticle}
\endbibitem

\bibitem{Han2022}
\begin{barticle}[author]
\bauthor{\bsnm{Han},~\bfnm{Bingyan}\binits{B.}}, \bauthor{\bsnm{Pun},~\bfnm{Chi~Seng}\binits{C.~S.}} \AND \bauthor{\bsnm{Wong},~\bfnm{Hoi~Ying}\binits{H.~Y.}}
(\byear{2022}).
\btitle{Robust Time-Inconsistent Stochastic Linear-Quadratic Control with Drift Disturbance}.
\bjournal{Applied Mathematics \& Optimization}
\bvolume{86}.
\bdoi{10.1007/s00245-022-09871-2}
\end{barticle}
\endbibitem

\bibitem{Han2018}
\begin{barticle}[author]
\bauthor{\bsnm{Han},~\bfnm{Jiequn}\binits{J.}}, \bauthor{\bsnm{Jentzen},~\bfnm{Arnulf}\binits{A.}} \AND \bauthor{\bsnm{E},~\bfnm{Weinan}\binits{W.}}
(\byear{2018}).
\btitle{Solving High-Dimensional Partial Differential Equations Using Deep Learning}.
\bjournal{Proceedings of the National Academy of Sciences}
\bvolume{115}
\bpages{8505--8510}.
\bdoi{10.1073/pnas.1718942115}
\end{barticle}
\endbibitem

\bibitem{Harris2001}
\begin{barticle}[author]
\bauthor{\bsnm{Harris},~\bfnm{Christopher}\binits{C.}} \AND \bauthor{\bsnm{Laibson},~\bfnm{David}\binits{D.}}
(\byear{2001}).
\btitle{Dynamic Choices of Hyperbolic Consumers}.
\bjournal{Econometrica}
\bvolume{69}
\bpages{935-957}.
\bdoi{https://doi.org/10.1111/1468-0262.00225}
\end{barticle}
\endbibitem

\bibitem{He2021}
\begin{barticle}[author]
\bauthor{\bsnm{He},~\bfnm{Xue~Dong}\binits{X.~D.}} \AND \bauthor{\bsnm{Jiang},~\bfnm{Zhao~Li}\binits{Z.~L.}}
(\byear{2021}).
\btitle{On the Equilibrium Strategies for Time-Inconsistent Problems in Continuous Time}.
\bjournal{{SIAM} Journal on Control and Optimization}
\bvolume{59}
\bpages{3860--3886}.
\bdoi{10.1137/20m1382106}
\end{barticle}
\endbibitem

\bibitem{Hernandez2021a}
\begin{barticle}[author]
\bauthor{\bsnm{Hern\'{a}ndez},~\bfnm{Camilo}\binits{C.}} \AND \bauthor{\bsnm{Possama\"{i}},~\bfnm{Dylan}\binits{D.}}
(\byear{2021}).
\btitle{A Unified Approach to Well-Posedness of Type-{I} Backward Stochastic {Volterra} Integral Equations}.
\bjournal{Electronic Journal of Probability}
\bvolume{26}.
\bdoi{10.1214/21-ejp653}
\end{barticle}
\endbibitem

\bibitem{hernandez2023me}
\begin{barticle}[author]
\bauthor{\bsnm{Hern{\'a}ndez},~\bfnm{Camilo}\binits{C.}} \AND \bauthor{\bsnm{Possama{\"\i}},~\bfnm{Dylan}\binits{D.}}
(\byear{2023}).
\btitle{Me, Myself and I: A General Theory of Non-{Markovian} Time-Inconsistent Stochastic Control for Sophisticated Agents}.
\bjournal{The Annals of Applied Probability}
\bvolume{33}
\bpages{1396--1458}.
\end{barticle}
\endbibitem

\bibitem{Hu2012}
\begin{barticle}[author]
\bauthor{\bsnm{Hu},~\bfnm{Ying}\binits{Y.}}, \bauthor{\bsnm{Jin},~\bfnm{Hanqing}\binits{H.}} \AND \bauthor{\bsnm{Zhou},~\bfnm{Xun~Yu}\binits{X.~Y.}}
(\byear{2012}).
\btitle{Time-Inconsistent Stochastic Linear--Quadratic Control}.
\bjournal{{SIAM} Journal on Control and Optimization}
\bvolume{50}
\bpages{1548--1572}.
\bdoi{10.1137/110853960}
\end{barticle}
\endbibitem

\bibitem{Hu2017}
\begin{barticle}[author]
\bauthor{\bsnm{Hu},~\bfnm{Ying}\binits{Y.}}, \bauthor{\bsnm{Jin},~\bfnm{Hanqing}\binits{H.}} \AND \bauthor{\bsnm{Zhou},~\bfnm{Xun~Yu}\binits{X.~Y.}}
(\byear{2017}).
\btitle{Time-Inconsistent Stochastic Linear-Quadratic Control: Characterization and Uniqueness of Equilibrium}.
\bjournal{{SIAM} Journal on Control and Optimization}
\bvolume{55}
\bpages{1261--1279}.
\bdoi{10.1137/15m1019040}
\end{barticle}
\endbibitem

\bibitem{hu2011malliavin}
\begin{barticle}[author]
\bauthor{\bsnm{Hu},~\bfnm{YaoZhong}\binits{Y.}}, \bauthor{\bsnm{Nualart},~\bfnm{David}\binits{D.}} \AND \bauthor{\bsnm{Song},~\bfnm{Xiaoming}\binits{X.}}
(\byear{2011}).
\btitle{{Malliavin} Calculus for Backward Stochastic Differential Equations and Application to Numerical Solutions}.
\bjournal{The Annals of Applied Probability}
\bvolume{21}
\bpages{2379--2423}.
\end{barticle}
\endbibitem

\bibitem{Huang2017}
\begin{barticle}[author]
\bauthor{\bsnm{Huang},~\bfnm{Jianhui}\binits{J.}}, \bauthor{\bsnm{Li},~\bfnm{Xun}\binits{X.}} \AND \bauthor{\bsnm{Wang},~\bfnm{Tianxiao}\binits{T.}}
(\byear{2017}).
\btitle{Characterizations of closed-loop equilibrium solutions for dynamic mean–variance optimization problems}.
\bjournal{Systems \& Control Letters}
\bvolume{110}
\bpages{15--20}.
\bdoi{10.1016/j.sysconle.2017.09.008}
\end{barticle}
\endbibitem

\bibitem{Karoui1997}
\begin{barticle}[author]
\bauthor{\bsnm{Karoui},~\bfnm{Noureddine~El}\binits{N.~E.}}, \bauthor{\bsnm{Peng},~\bfnm{Shige}\binits{S.}} \AND \bauthor{\bsnm{Quenez},~\bfnm{Marie-Claire}\binits{M.-C.}}
(\byear{1997}).
\btitle{Backward Stochastic Differential Equations in Finance}.
\bjournal{Mathematical Finance}
\bvolume{7}
\bpages{1--71}.
\bdoi{10.1111/1467-9965.00022}
\end{barticle}
\endbibitem

\bibitem{kazamaki2006continuous}
\begin{bbook}[author]
\bauthor{\bsnm{Kazamaki},~\bfnm{Norihiko}\binits{N.}}
(\byear{2006}).
\btitle{Continuous exponential martingales and BMO}.
\bpublisher{Springer}.
\end{bbook}
\endbibitem

\bibitem{kobylanski2000backward}
\begin{barticle}[author]
\bauthor{\bsnm{Kobylanski},~\bfnm{Magdalena}\binits{M.}}
(\byear{2000}).
\btitle{Backward stochastic differential equations and partial differential equations with quadratic growth}.
\bjournal{The annals of probability}
\bvolume{28}
\bpages{558--602}.
\end{barticle}
\endbibitem

\bibitem{Laibson1997}
\begin{barticle}[author]
\bauthor{\bsnm{Laibson},~\bfnm{David}\binits{D.}}
(\byear{1997}).
\btitle{Golden Eggs and Hyperbolic Discounting}.
\bjournal{The Quarterly Journal of Economics}
\bvolume{112}
\bpages{443--478}.
\bdoi{10.1162/003355397555253}
\end{barticle}
\endbibitem

\bibitem{lei2023nonlocal}
\begin{barticle}[author]
\bauthor{\bsnm{Lei},~\bfnm{Qian}\binits{Q.}} \AND \bauthor{\bsnm{Pun},~\bfnm{Chi~Seng}\binits{C.~S.}}
(\byear{2023}).
\btitle{Nonlocal fully nonlinear parabolic differential equations arising in time-inconsistent problems}.
\bjournal{Journal of Differential Equations}
\bvolume{358}
\bpages{339--385}.
\end{barticle}
\endbibitem

\bibitem{lei2023well}
\begin{barticle}[author]
\bauthor{\bsnm{Lei},~\bfnm{Qian}\binits{Q.}} \AND \bauthor{\bsnm{Pun},~\bfnm{Chi~Seng}\binits{C.~S.}}
(\byear{2023}).
\btitle{On the Well-posedness of Hamilton-Jacobi-Bellman Equations of the Equilibrium Type}.
\bjournal{arXiv preprint arXiv:2307.01986}.
\end{barticle}
\endbibitem

\bibitem{LeiSDG}
\begin{barticle}[author]
\bauthor{\bsnm{Lei},~\bfnm{Qian}\binits{Q.}} \AND \bauthor{\bsnm{Pun},~\bfnm{Chi~Seng}\binits{C.~S.}}
(\byear{2024}).
\btitle{Nonlocality, nonlinearity, and time inconsistency in stochastic differential games}.
\bjournal{Mathematical Finance}
\bvolume{34}
\bpages{190-256}.
\bdoi{https://doi.org/10.1111/mafi.12420}
\end{barticle}
\endbibitem

\bibitem{lei2024MV}
\begin{barticle}[author]
\bauthor{\bsnm{Lei},~\bfnm{Qian}\binits{Q.}}, \bauthor{\bsnm{Pun},~\bfnm{Chi~Seng}\binits{C.~S.}} \AND \bauthor{\bsnm{Tang},~\bfnm{Jingxiang}\binits{J.}}
(\byear{2024}).
\btitle{Dynamic Mean-Variance Asset Allocation in General Incomplete Markets A Nonlocal BSDE-based Feedback Control Approach}.
\bjournal{arXiv preprint arXiv:2412.18498}.
\end{barticle}
\endbibitem

\bibitem{Lunardi1995}
\begin{bbook}[author]
\bauthor{\bsnm{Lunardi},~\bfnm{Alessandra}\binits{A.}}
(\byear{1995}).
\btitle{Analytic Semigroups and Optimal Regularity in Parabolic Problems},
\bedition{1st} ed.
\bpublisher{Springer Basel}.
\end{bbook}
\endbibitem

\bibitem{lunardi2002nonlinear}
\begin{bincollection}[author]
\bauthor{\bsnm{Lunardi},~\bfnm{Alessandra}\binits{A.}}
(\byear{2002}).
\btitle{Nonlinear parabolic equations and systems}.
In \bbooktitle{Handbook of Differential Equations: Evolutionary Equations},
\bvolume{1}
\bpages{385--436}.
\bpublisher{Elsevier}.
\end{bincollection}
\endbibitem

\bibitem{Merton1971}
\begin{barticle}[author]
\bauthor{\bsnm{Merton},~\bfnm{Robert~C}\binits{R.~C.}}
(\byear{1971}).
\btitle{Optimum Consumption and Portfolio Rules in a Continuous-Time Model}.
\bjournal{Journal of Economic Theory}
\bvolume{3}
\bpages{373--413}.
\bdoi{10.1016/0022-0531(71)90038-x}
\end{barticle}
\endbibitem

\bibitem{nualart2006malliavin}
\begin{bmisc}[author]
\bauthor{\bsnm{Nualart},~\bfnm{D}\binits{D.}}
(\byear{2006}).
\btitle{Malliavin calculus and related topics}.
\end{bmisc}
\endbibitem

\bibitem{Pardoux1990}
\begin{barticle}[author]
\bauthor{\bparticle{\'{E}tienne} \bsnm{Pardoux}} \AND \bauthor{\bsnm{Peng},~\bfnm{Shige}\binits{S.}}
(\byear{1990}).
\btitle{Adapted Solution of a Backward Stochastic Differential Equation}.
\bjournal{Systems {\&} Control Letters}
\bvolume{14}
\bpages{55--61}.
\bdoi{10.1016/0167-6911(90)90082-6}
\end{barticle}
\endbibitem

\bibitem{Pardoux1992}
\begin{bincollection}[author]
\bauthor{\bparticle{\'{E}tienne} \bsnm{Pardoux}} \AND \bauthor{\bsnm{Peng},~\bfnm{Shige}\binits{S.}}
(\byear{1992}).
\btitle{Backward Stochastic Differential Equations and Quasilinear Parabolic Partial Differential Equations}.
In \bbooktitle{Stochastic Partial Differential Equations and Their Applications},
\bedition{1st} ed.
(\beditor{\bfnm{Boris~L.}\binits{B.~L.}~\bsnm{Rozovskii}} \AND \beditor{\bfnm{Richard~B.}\binits{R.~B.}~\bsnm{Sowers}}, eds.)
\bvolume{176}
\bpages{200--217}.
\bpublisher{Springer-Verlag}, \baddress{Berlin, Heidelberg}.
\bdoi{10.1007/bfb0007334}
\end{bincollection}
\endbibitem

\bibitem{pardoux2014stochastic}
\begin{bbook}[author]
\bauthor{\bsnm{Pardoux},~\bfnm{Etienne}\binits{E.}} \AND \bauthor{\bsnm{R{\u{a}}{\c{s}}canu},~\bfnm{Aurel}\binits{A.}}
(\byear{2015}).
\btitle{Stochastic Differential Equations, Backward SDEs, Partial Differential Equations},
\bedition{1st} ed.
\bseries{Stochastic Modelling and Applied Probability}.
\bpublisher{Springer Cham}.
\bdoi{https://doi.org/10.1007/978-3-319-05714-9}
\end{bbook}
\endbibitem

\bibitem{Peleg1973}
\begin{barticle}[author]
\bauthor{\bsnm{Peleg},~\bfnm{Bezalel}\binits{B.}} \AND \bauthor{\bsnm{Yaari},~\bfnm{Menahem~E.}\binits{M.~E.}}
(\byear{1973}).
\btitle{{On the Existence of a Consistent Course of Action when Tastes are Changing12}}.
\bjournal{The Review of Economic Studies}
\bvolume{40}
\bpages{391-401}.
\bdoi{10.2307/2296458}
\end{barticle}
\endbibitem

\bibitem{peng1990general}
\begin{barticle}[author]
\bauthor{\bsnm{Peng},~\bfnm{Shige}\binits{S.}}
(\byear{1990}).
\btitle{A general stochastic maximum principle for optimal control problems}.
\bjournal{SIAM Journal on control and optimization}
\bvolume{28}
\bpages{966--979}.
\end{barticle}
\endbibitem

\bibitem{Pollak1968}
\begin{barticle}[author]
\bauthor{\bsnm{Pollak},~\bfnm{Robert~A.}\binits{R.~A.}}
(\byear{1968}).
\btitle{Consistent Planning}.
\bjournal{The Review of Economic Studies}
\bvolume{35}
\bpages{201}.
\bdoi{10.2307/2296548}
\end{barticle}
\endbibitem

\bibitem{Pun2018}
\begin{barticle}[author]
\bauthor{\bsnm{Pun},~\bfnm{Chi~Seng}\binits{C.~S.}}
(\byear{2018}).
\btitle{Robust Time-inconsistent Stochastic Control Problems}.
\bjournal{Automatica}
\bvolume{94}
\bpages{249--257}.
\bdoi{10.1016/j.automatica.2018.04.038}
\end{barticle}
\endbibitem

\bibitem{song2017multi}
\begin{barticle}[author]
\bauthor{\bsnm{Song},~\bfnm{Shiqi}\binits{S.}}
(\byear{2017}).
\btitle{Multi-dimensional BSDEs whose terminal values are bounded and have bounded Malliavin derivatives}.
\bjournal{arXiv preprint arXiv:1711.02944}.
\end{barticle}
\endbibitem

\bibitem{Strotz1955}
\begin{barticle}[author]
\bauthor{\bsnm{Strotz},~\bfnm{Robert~H.}\binits{R.~H.}}
(\byear{1955}).
\btitle{Myopia and Inconsistency in Dynamic Utility Maximization}.
\bjournal{The Review of Economic Studies}
\bvolume{23}
\bpages{165--180}.
\bdoi{10.2307/2295722}
\end{barticle}
\endbibitem

\bibitem{Wang2020}
\begin{barticle}[author]
\bauthor{\bsnm{Wang},~\bfnm{Hanxiao}\binits{H.}}
(\byear{2020}).
\btitle{Extended backward stochastic {Volterra} integral equations, Quasilinear parabolic equations, and {Feynman}--{Kac} formula}.
\bjournal{Stochastics and Dynamics}
\bvolume{21}
\bpages{2150004}.
\bdoi{10.1142/s0219493721500040}
\end{barticle}
\endbibitem

\bibitem{wang2021recursive}
\begin{barticle}[author]
\bauthor{\bsnm{Wang},~\bfnm{Hanxiao}\binits{H.}}, \bauthor{\bsnm{Sun},~\bfnm{Jingrui}\binits{J.}} \AND \bauthor{\bsnm{Yong},~\bfnm{Jiongmin}\binits{J.}}
(\byear{2021}).
\btitle{Recursive utility processes, dynamic risk measures and quadratic backward stochastic Volterra integral equations}.
\bjournal{Applied Mathematics \& Optimization}
\bvolume{84}
\bpages{145--190}.
\end{barticle}
\endbibitem

\bibitem{Wang2021}
\begin{barticle}[author]
\bauthor{\bsnm{Wang},~\bfnm{Hanxiao}\binits{H.}} \AND \bauthor{\bsnm{Yong},~\bfnm{Jiongmin}\binits{J.}}
(\byear{2021}).
\btitle{Time-inconsistent Stochastic Optimal Control Problems and Backward Stochastic {Volterra} Integral Equations}.
\bjournal{{ESAIM}: Control, Optimisation and Calculus of Variations}
\bvolume{27}
\bpages{22}.
\bdoi{10.1051/cocv/2021027}
\end{barticle}
\endbibitem

\bibitem{Wang2022}
\begin{barticle}[author]
\bauthor{\bsnm{Wang},~\bfnm{Tianxiao}\binits{T.}}
(\byear{2022}).
\btitle{Backward Stochastic {Volterra} Integro-Differential Equations and Applications in Optimal Control Problems}.
\bjournal{SIAM Journal on Control and Optimization}
\bvolume{60}
\bpages{2393--2419}.
\bdoi{10.1137/20m1371464}
\end{barticle}
\endbibitem

\bibitem{Wei2017}
\begin{barticle}[author]
\bauthor{\bsnm{Wei},~\bfnm{Qingmeng}\binits{Q.}}, \bauthor{\bsnm{Yong},~\bfnm{Jiongmin}\binits{J.}} \AND \bauthor{\bsnm{Yu},~\bfnm{Zhiyong}\binits{Z.}}
(\byear{2017}).
\btitle{Time-Inconsistent Recursive Stochastic Optimal Control Problems}.
\bjournal{{SIAM} Journal on Control and Optimization}
\bvolume{55}
\bpages{4156--4201}.
\bdoi{10.1137/16m1079415}
\end{barticle}
\endbibitem

\bibitem{Yan2019}
\begin{bincollection}[author]
\bauthor{\bsnm{Yan},~\bfnm{Wei}\binits{W.}} \AND \bauthor{\bsnm{Yong},~\bfnm{Jiongmin}\binits{J.}}
(\byear{2019}).
\btitle{Time-inconsistent Optimal Control Problems and Related Issues}.
In \bbooktitle{Modeling, Stochastic Control, Optimization, and Applications},
(\beditor{\bfnm{George}\binits{G.}~\bsnm{Yin}} \AND \beditor{\bfnm{Qing}\binits{Q.}~\bsnm{Zhang}}, eds.)
\bvolume{164}
\bpages{533--569}.
\bpublisher{Springer International Publishing}.
\bdoi{10.1007/978-3-030-25498-8\_22}
\end{bincollection}
\endbibitem

\bibitem{yong2007continuous}
\begin{barticle}[author]
\bauthor{\bsnm{Yong},~\bfnm{Jiongmin}\binits{J.}}
(\byear{2007}).
\btitle{Continuous-time dynamic risk measures by backward stochastic Volterra integral equations}.
\bjournal{Applicable Analysis}
\bvolume{86}
\bpages{1429--1442}.
\end{barticle}
\endbibitem

\bibitem{yong2008well}
\begin{barticle}[author]
\bauthor{\bsnm{Yong},~\bfnm{Jiongmin}\binits{J.}}
(\byear{2008}).
\btitle{Well-posedness and regularity of backward stochastic Volterra integral equations}.
\bjournal{Probability Theory and Related Fields}
\bvolume{142}
\bpages{21--77}.
\end{barticle}
\endbibitem

\bibitem{Yong2012}
\begin{barticle}[author]
\bauthor{\bsnm{Yong},~\bfnm{Jiongmin}\binits{J.}}
(\byear{2012}).
\btitle{Time-inconsistent Optimal Control Problems and the Equilibrium {HJB} Equation}.
\bjournal{Mathematical Control {\&} Related Fields}
\bvolume{2}
\bpages{271--329}.
\bdoi{10.3934/mcrf.2012.2.271}
\end{barticle}
\endbibitem

\bibitem{yong2017linear}
\begin{barticle}[author]
\bauthor{\bsnm{Yong},~\bfnm{Jiongmin}\binits{J.}}
(\byear{2017}).
\btitle{Linear-quadratic optimal control problems for mean-field stochastic differential equations—time-consistent solutions}.
\bjournal{Transactions of the American Mathematical Society}
\bvolume{369}
\bpages{5467--5523}.
\end{barticle}
\endbibitem

\bibitem{Yong1999}
\begin{bbook}[author]
\bauthor{\bsnm{Yong},~\bfnm{Jiongmin}\binits{J.}} \AND \bauthor{\bsnm{Zhou},~\bfnm{Xun~Yu}\binits{X.~Y.}}
(\byear{1999}).
\btitle{Stochastic Controls : {Hamiltonian} Systems and {HJB} Equations},
\bedition{1st} ed.
\bseries{Stochastic Modelling and Applied Probability}
\bvolume{43}.
\bpublisher{Springer New York}, \baddress{New York, NY}.
\bdoi{10.1007/978-1-4612-1466-3}
\end{bbook}
\endbibitem

\bibitem{zhang2017backward}
\begin{bbook}[author]
\bauthor{\bsnm{Zhang},~\bfnm{Jianfeng}\binits{J.}}
(\byear{2017}).
\btitle{Backward Stochastic Differential Equations: From Linear to Fully Nonlinear Theory}
\bvolume{86}.
\bpublisher{Springer}.
\end{bbook}
\endbibitem

\end{thebibliography}
\end{document}